\let\mathcal\mathscr
\def\R{{\bf R}}
\def\D{{\bf D}}
\def\llra{\hbox to 10mm{\rightarrowfill}}
\def\lllra{\hbox to 15mm{\rightarrowfill}}
\def\PA{{\widehat A}}
\def\PB{{\widehat B}}
\def\PK{{\widehat K}}
\def\pf{{\widehat f}}
\def\phi{{\varphi}}
\def\cF{\mathcal{F}}
\def\cO{\mathcal{O}}
\def\cP{\mathcal{P}}
\def\cM{\mathcal{M}}
\def\cQ{\mathcal{Q}}
\def\cW{\mathcal{W}}
\let\tilde\widetilde
\def\eps{\varepsilon}
\def\Id{\hbox{\rm Id}}
\def\Im{\mathop{\rm Im}\nolimits}
\DeclareMathOperator{\rank}{rank}
\DeclareMathOperator{\codim}{codim}
\DeclareMathOperator{\Pic}{Pic}
\DeclareMathOperator{\Hom}{Hom}
\def\Im{\mathop{\rm Im}\nolimits}
\DeclareMathOperator{\Supp}{Supp}
\DeclareMathOperator{\Spec}{Spec}
\newtheorem{lemm}{Lemma}[section]
\newtheorem{theo}[lemm]{Theorem}
\newtheorem{coro}[lemm]{Corollary}
\newtheorem{prop}[lemm]{Proposition}
\newtheorem*{conj*}{Conjecture}
\theoremstyle{definition}
\newtheorem{defi}[lemm]{Definition}
\newtheorem{rema}[lemm]{Remark}
\newtheorem{conj}[lemm]{Conjecture}
\theoremstyle{remark}
\newtheorem*{remark*}{Remark}
\newtheorem*{note*}{Note}
\begin{document}
\title[vanishing holomorphic Euler characteristic II]{Varieties with vanishing holomorphic Euler characteristic II}
\author[J. Chen]{Jungkai Alfred Chen}
\address{National Center for Theoretical Sciences, Taipei Office\\
 and Department of Mathematics\\1 Sec. 4, Roosevelt Rd. Taipei 106, Taiwan}
\email{{\tt jkchen@math.ntu.edu.tw}}
\author[Z. Jiang]{Zhi Jiang}
\address{Math\'ematiques B\^{a}timent 425\\
Universit\'{e} Paris-Sud\\
F-91405 Orsay, France}
\email{zhi.jiang@math.u-psud.fr}

\begin{abstract} We continue our study on smooth complex projective varieties $X$ of maximal Albanese dimension and of general type  satisfying  $\chi(X, \cO_X)=0$.  We formulate a conjectural characterization of such varieties and prove this conjecture when the Albanese variety has only three simple factors.
 \end{abstract}
  \subjclass[2010]{14J10, 14F17, 14E05.}
\keywords{Generic vanishing, cohomological support loci, varieties of general type, Albanese dimension, Albanese variety, Euler characteristic.}
\maketitle

\section{Introduction}

The purpose  of this paper is to study the birational geometry of
varieties $X$ of general type and of maximal Albanese dimension with
$\chi(\omega_X)= 0$.

In recent years,  these varieties have attracted considerable
attention. Green and Lazarsfeld showed  in \cite{gl1} that  a variety of maximal Albanese dimension satisfies $\chi(X, \omega_X)\ge 0$.
It was conjectured by Koll\'ar \cite[18.12.1]{K1}
that a variety of general type and maximal Albanese dimension
would satisfy $\chi(\omega_X)> 0$. A couple years later,
Ein-Lazarsfeld
disprove the conjecture by providing an example of threefold of
general type and maximal Albanese dimension with $\chi(\omega_X) =
0$.

In fact, in the recent studies on the structure of the
pluricanonical maps and of the Iitaka map, it has been realized that
the case $\chi(\omega_X) = 0$ is usually the hardest case. For example, it was shown in \cite{CH} that the tricanonical map is birational for varieties of general type and maximal Albanese dimension with $\chi(\omega_X) >0$. However, if we assume $\chi(\omega_X)=0$ instead, then it is more difficult to prove that the tricanonical map is birational (see \cite{JLT}).

It is thus natural and important to characterize or classify  varieties of general type and maximal Albanese dimension with $\chi(\omega_X)=0$ explicitly. Our previous joint work with Olivier Debarre was the starting point toward this direction, in which we prove a characterization in dimension three. Since the characterizing properties are preserved under birational maps and finite \'etale maps, it was shown that the Albanese variety has at least three simple factors and the example of Ein and Lazarsfeld is the only possibility in dimension three.

The main result in this article is to prove a similar characterization in higher dimensions assuming that the Albanese variety has three simple factors.

\begin{theo}\label{maintheorem}
Let $X$ be a variety of general type and of maximal Albanese dimension and
assume that $A_X$ has only three simple factors. If $\chi(X, \omega_X)=0$, there exist
simple abelian varieties $K_1$, $K_2$, $K_3$, double coverings
from normal varieties
 $F_i\rightarrow K_i$ with associated involution $\tau_i$, and an isogeny
 $\eta \colon K_1 \times K_2 \times K_3 \to A_X$, such that the base change $\tilde X$ is birational
 to $$(F_1\times F_2\times F_3)/\langle \sigma \rangle$$
 where $\sigma=\tau_1\times\tau_2\times\tau_3$ is the diagonal involution.
 That is, we have the following commutative diagram:

$$
\xymatrix@C=15pt
 {&(F_1\times F_2\times F_3)/\langle \sigma \rangle \ar@{->>}[dr]\\
\widetilde X\ar@{->>}[rr]^{a_{\widetilde X}}\ar@{}[drr]|\square\ar@{->>}[ur]^-\eps\ar@{->>}[d]&& K_1\times K_2\times K_3\ar@{->>}[d]^\eta\\
X\ar@{->>}[rr]_{a_X} &&A_X. }
$$
where  $\eps$ is a desingularization.
\end{theo}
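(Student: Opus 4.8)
My plan is to recover the product-quotient structure from the geometry of the cohomological support locus
$$V^0(\omega_X)=\{P\in\Pic^0(X) : h^0(X,\omega_X\otimes P)>0\},$$
and then to rebuild $X$, up to isogeny and birational modification, out of the three simple factors of $A_X$.

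\smallskip
\emph{Step 1: the consequences of $\chi=0$.} Since $X$ is of general type and of maximal Albanese dimension, generic vanishing gives $\chi(\omega_X)\ge 0$, with $h^i(\omega_X\otimes P)=0$ for $i>0$ and $P$ general; hence $\chi(\omega_X)=0$ forces $V^0(\omega_X)\subsetneq \Pic^0(X)$, since otherwise a general $P$ would give $\chi=h^0(\omega_X\otimes P)>0$. By the structure theorem for support loci (Green--Lazarsfeld, Simpson, together with the Chen--Jiang decomposition), every positive-dimensional component of $V^0(\omega_X)$ is a torsion translate of a subtorus $\widehat{A_X/B}\subset\Pic^0(X)$, and each such component is the pullback of $V^0(\omega_{Z})$ along a fibration $f\colon X\to Z$ with $Z$ of maximal Albanese dimension and $\Alb(Z)$ isogenous to $A_X/B$. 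Because $X$ is of general type these components generate $\Pic^0(X)$. I would organize this into the ``building data'' of fibrations attached to $X$.

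\emph{Step 2: the three-factor combinatorics.} Up to isogeny write $A_X\sim K_1\times K_2\times K_3$ with the $K_i$ simple. Then every subtorus occurring in $V^0(\omega_X)$ is isogenous to a subproduct of the $K_i$, so only finitely many directions can appear. The generation requirement together with $\chi(\omega_X)=0$ should force exactly three fibrations $f_i\colon X\to Z_i$ with $\Alb(Z_i)\sim K_i$ after base change, filling out the three coordinate directions. Tracking the additivity of $\chi$ along each $f_i$ --- via the splitting of $Rf_{i*}\omega_X$ into its $\chi=0$ summand and the complementary one --- I would pin down $\chi(\omega_{Z_i})$ so that, after base change, $Z_i$ is birational to a double covering $F_i\to K_i$ of the simple abelian variety $K_i$ by a normal variety $F_i$, with $\tau_i$ the deck involution. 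This is where the hypothesis of exactly three factors is essential: it restricts the combinatorics enough that no other configuration is compatible with $\chi=0$.

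\emph{Step 3: assembling and the diagonal involution.} I would choose a single isogeny $\eta\colon K_1\times K_2\times K_3\to A_X$ simultaneously trivializing all the torsion translates, form the connected \'etale base change $\tilde X$ sitting in the asserted fibre square, and lift the three fibrations to maps $\tilde X\to F_i$, hence a map $\tilde X\to F_1\times F_2\times F_3$. A comparison of function fields, using that $a_{\tilde X}$ is generically finite, identifies the image with a quotient of $F_1\times F_2\times F_3$ by an involution; the decisive point is that this involution is the diagonal $\sigma=\tau_1\times\tau_2\times\tau_3$, because a nonzero section of $\omega_{\tilde X}\otimes P$ propagates to the product only when its $(\Z/2)$-character agrees with $\tau_i$ on each factor at once, i.e.\ precisely when it is $\sigma$-invariant. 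Finally a degree count --- each $F_i\to K_i$ has degree $2$, so $F_1\times F_2\times F_3\to K_1\times K_2\times K_3$ has degree $8$ and the diagonal quotient has degree $4$ over the base, matching $\deg a_{\tilde X}$ --- shows that $\eps$ is birational.

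\smallskip
The main obstacle, I expect, is precisely the heart of Step 2 together with the gluing in Step 3: showing that each cover is of degree \emph{two} and that the three involutions glue along the \emph{single} diagonal $\sigma$ rather than through an arbitrary product of involutions or an extra \'etale twist. Concretely, one must control the induced polarizations and rule out support-locus configurations with additional components, and then carry out a Künneth analysis of $H^0$ delicate enough to force the diagonal gluing and the birationality --- not merely the dominance --- of $\eps$.
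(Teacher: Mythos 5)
Your outline reproduces the general flavor of the paper's argument (support loci, base change, K\"unneth, degree counting), but its structural picture is wrong at the decisive point, and the two quantitative inputs that make the theorem true are left as hopes rather than proofs. First, the picture in Step 2: the components of $V^0(\omega_X)$ are \emph{not} dual to the simple factors $K_i$. For a primitive variety with three simple factors, each component $\PA_i\subset \PA_X$ has \emph{two} simple factors (the paper's Lemma \ref{lemma1} and Corollary \ref{corollarycomponents}), so the associated fibrations go to varieties $Y_i$ generically finite over $A_i\sim K_j\times K_k$; moreover, by primitivity the induced map $X\to K_i$ has connected fibers (Lemma \ref{lemma1}(2)), so there is no fibration $X\to Z_i$ with $Z_i$ a double cover of $K_i$ at all. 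The double covers $F_i\to K_i$ are produced in the paper as general fibers of \emph{isotrivial} fibrations $h_{ij}\colon Y_i\to K_j$ (Lemma \ref{isotrivial-fibers}) --- a mechanism absent from your outline. For the same reason your Step 3 cannot be carried out: $(F_1\times F_2\times F_3)/\langle\sigma\rangle$ admits no morphism to any $F_i$ (a $\sigma$-orbit would have to map to both $x_i$ and $\tau_i x_i$), so the maps $\tilde X\to F_i$ you want to lift do not exist. The paper goes in the opposite direction: it builds a dominant generically finite rational map $F_1\times F_2\times F_3\dashrightarrow X$ and then identifies $K(X)$ among the intermediate fields of the $(\Z/2)^3$-Galois extension $K(F_1\times F_2\times F_3)/K(A_X)$, using general type to force the diagonal quotient.

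Second, the two facts you flag as "the main obstacle" are precisely what the paper's machinery exists to prove, and your proposal contains no mechanism for either. (a) Degree two: this comes from the decomposition theorem $\cW_{X/A_X}\simeq\bigoplus_i p_i^*\cF_i$ (Theorem \ref{decomposition}), which yields the degree identity $\deg a_X-1=\sum_i(\deg t_i-1)$ (Proposition \ref{degrees}); combined with birationality of $X\to Y_i\times_{K_k}Y_j$ (giving $\deg a_X=a^2$) and with exactly three components (Lemma \ref{3components}, proved via the K\"unneth argument on $F_1\times F_2\times F_3$), one gets $a^2-1=3(a-1)$, hence $a=2$. "Tracking the additivity of $\chi$" along the fibrations does not produce this; $\chi$ vanishes identically along the relevant loci and gives no numerical constraint. (b) Exactly three directions: a priori $V^0(\omega_X)$ can have $N\geq 4$ components (the two-factor subtori need not be coordinate ones when the $K_i$ admit extra isogenies), and eliminating this is the hardest part of the paper --- it requires the birationality criteria of Section 4 (Proposition \ref{criteria3-2finite}, Corollary \ref{corollary-1}), the minimal/quasi-special induction of Proposition \ref{quasi-special}, and finally the Guerra--Pirola finiteness theorem to reach a contradiction. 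Since your proposal defers exactly these points ("should force", "I expect"), it is an outline of the statement rather than a proof: the heart of the theorem is missing.
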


The birational geometry of varieties of maximal Albanese dimension is governed by cohomological support loci of the push-forward of the canonical sheaf $a_{X*} \omega_X$. It is well-known that $a_{X*} \omega_X$ is a GV-sheaf.
The technical advance of this article is indeed the following decomposition theorem (see Theorem \ref{decomposition} and Theorem \ref{decompositionwithtorsion} for  details), which implies that $a_{X*}\omega_X$ is not far from being M-regular.

\begin{theo}Let $f: X\rightarrow A$ be a generically finite morphism to an abelian variety. Then, we have $$f_*\omega_X\simeq \bigoplus_i (p_i^*\cF_i\otimes P_i),$$
where $p_i: A\rightarrow A_i$ are quotients of abelian varieties, $\cF_i$ are M-regular sheaves on $A_i$, and $P_i$ are torsion line bundles on $A$.
\end{theo}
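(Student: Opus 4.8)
The plan is to prove the decomposition theorem for $f_*\omega_X$ by combining Hacon's cohomological characterization of generic vanishing sheaves with the structure theory of the cohomological support loci $V^i(f_*\omega_X) = \{P \in \Pic^0(A) : H^i(A, f_*\omega_X \otimes P) \neq 0\}$. The starting point is that $f_*\omega_X$ is a GV-sheaf, so by the theory of Green--Lazarsfeld and the refinements of Green--Lazarsfeld, Simpson, and Pareschi--Popa, each irreducible component of $V^0(f_*\omega_X)$ is a translate $Q_i + \PA_i$ of a subtorus $\PA_i \subset \Pic^0(A)$ by a torsion point $Q_i$. Dually, each such subtorus $\PA_i$ corresponds to a quotient abelian variety $p_i \colon A \to A_i$. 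The sheaf $f_*\omega_X$ is M-regular precisely when $V^0$ has no positive-dimensional proper components, i.e. when all the $\PA_i$ are either $0$ or all of $\Pic^0(A)$; the theorem asserts that the failure of M-regularity is entirely accounted for by pulling back M-regular sheaves along these quotients and twisting by the torsion bundles $P_i$.

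\medskip

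The key technical input I would use is the Chen--Jiang decomposition strategy via the Fourier--Mukai transform, together with induction on $\dim A$. First I would pass to the components: for a fixed positive-dimensional component $Q_i + \PA_i$ of $V^0$, write $p_i\colon A \to A_i$ for the associated quotient with $\widehat{p_i}$ identifying $\Pic^0(A_i)$ with $\PA_i$. The point is that the restriction of the cohomology support data to the subtorus $\PA_i$ is controlled, after translating by $Q_i^{-1}$, by a morphism obtained from the Stein factorization of $p_i \circ f$. Concretely, I would form the fibration $X \to Y_i$ induced by the quotient and use the generic vanishing / decomposition results relating $f_*\omega_X$ to the pushforward along $Y_i \to A_i$. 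The abelian-variety version of Kollár's decomposition and the splitting of $f_*\omega_X$ into summands supported on the various translated subtori would then produce, for the ``most degenerate'' component, a summand of the form $p_i^*\cF_i \otimes P_i$ with $\cF_i$ a GV-sheaf on $A_i$ whose own $V^0$ locus, pulled back, recovers only that component.

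\medskip

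The inductive step is to show that after splitting off the summands corresponding to the proper positive-dimensional components of $V^0$, what remains is M-regular, and that each $\cF_i$ can be taken M-regular on the lower-dimensional abelian variety $A_i$. Here I would apply the inductive hypothesis: $\cF_i$ is itself a pushforward-type GV-sheaf on $A_i$ (arising from the fibration over $A_i$), so by induction on dimension it decomposes as $\bigoplus_j (p_{ij}^* \cF_{ij} \otimes P_{ij})$ with $\cF_{ij}$ M-regular; composing the quotients $p_i$ with $p_{ij}$ and the torsion bundles appropriately yields the desired global decomposition, after absorbing torsion translates using the fact that the $Q_i$ are torsion points so that the twists $P_i$ are torsion line bundles on $A$.

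\medskip

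The main obstacle I expect is establishing that the decomposition is \emph{direct} rather than merely a filtration — that is, splitting off the summand attached to each component of $V^0$ as an actual direct summand of $f_*\omega_X$, compatibly across all components, and controlling the torsion. The existence of the subtori and their torsion translates is classical generic vanishing theory, but producing genuine direct-sum splittings requires the stronger structural results (in the spirit of the Chen--Jiang / Pareschi--Popa--Schnell decomposition of $f_*\omega_X$ into geometrically meaningful pieces), most delicately the verification that the residual sheaf is M-regular and that all torsion twists can simultaneously be chosen as honest torsion line bundles $P_i$ on $A$ rather than on the quotients. Handling the interaction of different components whose subtori are nested, and ensuring the induction terminates cleanly, is where the bulk of the genuine work lies.
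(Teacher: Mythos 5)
Your outline reproduces the skeleton of the paper's own argument --- induction on dimension, the Stein factorizations $X\to Y_i\to A_i$ attached to components of the support loci, splitting off pullback summands, M-regularity of the residue, and an \'etale base change to handle torsion translates --- but at every point where the proof actually has to close, you defer to ``the Chen--Jiang decomposition strategy'' or to ``stronger structural results (in the spirit of the Chen--Jiang / Pareschi--Popa--Schnell decomposition).'' That is circular: the statement you were asked to prove \emph{is} the Chen--Jiang decomposition; it is the main technical theorem of this very paper, and the Pareschi--Popa--Schnell result is a later generalization of it. The three issues you flag as ``the main obstacle'' (genuine direct-sum splittings rather than filtrations, M-regularity of the residual sheaf, control of the torsion twists) are not refinements to be quoted from the literature; they are the entire content of the theorem, and the proposal supplies an argument for none of them.

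For comparison, here is how the paper closes each gap. (a) Producing a summand: for a component $\PA_k$ of positive codimension through the origin, with quotient $p_k\colon A\to A_k$ and Stein factorization $X\xrightarrow{q_k} Y_k\xrightarrow{h_k} A_k$, one sets $Z_k:=Y_k\times_{A_k}A$; the induced map $g_k\colon X\to Z_k$ is generically finite onto the smooth variety $Z_k$, so $\omega_{Z_k}$ splits off $g_{k*}\omega_X$, and base change along the Cartesian square gives $f_{k*}\omega_{Z_k}\simeq p_k^*(h_{k*}\omega_{Y_k})$. The inductive hypothesis is applied to $h_{k*}\omega_{Y_k}$ --- legitimate because the general fiber of $q_k$ is positive-dimensional, so $\dim Y_k<\dim X$, and Lemma \ref{induction} identifies $S_{Y_k}$ with the part of $S_X$ inside $\PA_k$ --- and this defines $\cF_k$ and exhibits $p_k^*\cF_k$ as a direct summand of $\cW_{X/A}$. (b) Making the summands simultaneous: the paper proves the Hom-vanishing Corollary \ref{Homzero} --- if $\cF_1$ is M-regular on a quotient $B$ of $A$, $\cF_2$ is GV on $A$, and $\widehat{f}(\PB)\not\subset V^0(\cF_2)$, then $\Hom_A(f^*\cF_1,\cF_2)=0$ --- which rests on Proposition \ref{pureness}: $\widehat{\R\Delta(f^*\cF_1)}$ is a \emph{pure} sheaf supported on $\widehat{f}(\PB)$, hence receives no nonzero maps from sheaves supported elsewhere. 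Factoring the identity of $p_{k}^*\cF_{k}$ through the decomposition obtained so far and killing the cross terms by this vanishing is exactly what upgrades a collection of individual summands to one direct sum; without some such statement you get only the filtration you worry about. (c) M-regularity of the residue $\cW_{d_N}$: for general $P$ in a hypothetical component of $S^*(\cW_{d_N})$, the paper computes $h^{d_r}(A, f_*\omega_X\otimes p_m^*P)$ in two ways --- once from the decomposition already obtained, once via Koll\'ar's theorems ($R^{d_r}q_{m*}\omega_X=\omega_{Y_m}$) and Hacon's generic vanishing for higher direct images --- and the resulting strict inequality contradicts the equality $h^0(Y_m,\omega_{Y_m}\otimes h_m^*P)=h^0(A_m,\cF_m\otimes P)$ furnished by induction. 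Finally, the torsion case (Theorem \ref{decompositionwithtorsion}) is not handled by ``absorbing'' twists: one passes to an \'etale cover $\pi$ over which all components contain the origin, then descends by observing that $\widehat{\R\Delta(\cW_{X/A})}$ is a direct summand of $\widehat{\pi}^*\widehat{\pi}_*\widehat{\R\Delta(\cW_{X/A})}$, hence is itself a direct sum of pure sheaves supported on torsion translates of subtori, and inverts the Fourier--Mukai transform to produce the torsion line bundles $P_k$. Your proposal names the right landmarks, but the route between them is precisely what is missing.
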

\begin{rema}In the above formulation, we allow $p_i: A\rightarrow A_i$ to be trivial quotients, namely $p_i$ could be an isomorphism or a fibration  to $\Spec\mathbb{C}$.
\end{rema}

This decomposition theorem can certainly be applied to prove globally generated properties for canonical or pluricanonical bundles. Another important
application is a criterion for birationality of morphisms between varieties of maximal Albanese dimension, which is a main ingredient of the proof of Theorem \ref{maintheorem}.

The paper is organized as follows. In section 2 we introduce definitions and prove some basic results on Fourier-Mukai transform of GV sheaves. Section 3 is devoted
to prove the decomposition theorem and its corollaries. In section 4 we provide several birational criterion of morphisms between varieties of maximal Albanese dimension.
In section 5 we study the general structure of varieties $X$ of general type and of maximal Albanese dimension with $\chi(X, \omega_X)=0$. We formulate
a conjectural characterization of such varieties. Finally, in section 6,  we restrict ourselves to the case when $A_X$ has only three simple factors and
prove Theorem \ref{maintheorem}.

 \medskip\noindent{\bf Acknowledgements.} This work started during the second author's visit to NCTS (Mathematics Division, Taipei Office). The second author
 thanks NCTS for their warm hospitality and the excellent research atmosphere. The authors thank Olivier Debarre for numerous conversations on this subject.

\section{Notation and Preliminaries}
For any smooth projective variety $X$, we will denote by $a_X: X\rightarrow A_X$ the Albanese morphism of $X$ and $\PA_X=\Pic^0(X)$ the dual of the
 Albanese variety. We will denote by $\D(X)$ the bounded derived  category of coherent sheaves on $X$. Following \cite{PP-GV}, for any object $\mathcal{E}\in \D(X)$,
we write $\R\Delta(\mathcal{E}):=\R\mathcal{H}om(\mathcal{E}, \omega_X)$.

For an abelian variety $A$ and its dual $\PA$, we denote by $\cP_A$ the normalized Poincar\'e line bundle on $A\times \PA$. For $\alpha\in \PA$, we denote by $P_{\alpha}$ the
line bundle that represents $\alpha$.
By \cite{muk2}, the following functors give equivalence between $\D(A)$ and $\D(\PA)$:
\begin{eqnarray*}
\R\Phi_{\cP_A}: \D(A)\rightarrow \D(\PA), \;\; \R\Phi_{\cP_A}(\cdotp)=\R p_{\PA*}(p_{A}^*(\cdotp)\otimes \cP_A),\\
\R\Psi_{\cP_A}: \D(\PA)\rightarrow \D(A), \;\; \R\Psi_{\cP_A}(\cdotp)=\R p_{A*}(p_{\PA}^*(\cdotp)\otimes \cP_A).
\end{eqnarray*}

For any coherent sheaf $\cF$ on $X$ and any morphism $f: X\rightarrow A$ to an abelian variety, we define the $i$-th cohomological locus
$$V^i(\cF, f):=\{\alpha \in \PA\mid H^i(X, \cF\otimes P_{\alpha})\neq 0\}.$$ If $f=a_X$ is the Albanese morphism, we will simply denote by $V^i(\cF)$
the $i$-th cohomological locus.

For an abelian variety $A$ and its dual $\PA$, we always use the notation $\widehat{*}$ to denote an abelian subvariety of $\PA$, and then
$*=\widehat{\widehat{*}}$ is the natural quotient of $A$.

We recall the definition of GV-sheaves and M-regular sheaves on abelian varieties (see \cite{reg1} and \cite{PP-GV}).
\begin{defi}
 Let $\cF$ be a coherent sheaf on an abelian variety $A$. Then $\cF$ is a GV-sheaf if
$$\codim_{\PA}\Supp \R^i\Phi_{\cP_A}(\cF)\geq i$$ for all $i\geq0$; $\cF$ is a M-regular sheaf if
$$\codim_{\PA}\Supp \R^i\Phi_{\cP_A}(\cF)> i$$ for all $i>0$.
\end{defi}
\begin{rema}\label{properties}
 The following properties of M-regular sheaves and GV-sheaves are quite useful:
\begin{itemize}
\item[1)] if $\cF$ is a GV-sheaf (resp. M-regular sheaf) on $A$, then
$$\widehat{\R\Delta(\cF)}:=\R\Phi_{\cP_A}(\R\Delta(\cF))[g]$$ is a coherent (resp. torsion-free) sheaf of $\PA$ supported on $V^0(\cF)$ (\cite[Theorem 1.2]{Hac}, \cite[Proposition 2.8]{reg3});
\item[2)] if $\cF$ is a GV-sheaf, then $$\mathcal{E}xt^i(\widehat{\R\Delta(\cF)}, \cO_{\PA})\simeq (-1_{\PA})^*\R^i\Phi_{\cP_A}(\cF)$$ (see \cite[Remark 3.13]{PP-GV});
\item[3)] $\cF$ is a GV-sheaf (resp. M-regular sheaf) if and only if $\codim_{\PA}V^i(\cF)\geq i$ (resp. $>i$) for all $i\geq 1$ (\cite[Lemma 3.6]{PP-GV}).
\end{itemize}
\end{rema}

The following proposition is a generalization of \cite[Proposition 14.7.9 (b)]{BL}
\begin{prop}\label{fm}Let $f: A\rightarrow B$ be a quotient of abelian varieties. Assume that $\dim A=g$ and $\dim B=g_1$. Then
\begin{itemize}
\item[(1)]
we have the natural
isomorphism of functors from $\D(B)$ to $\D(\PA)$:
$$\R\Phi_{\cP_A}\circ f^*\circ [g]\simeq \widehat{f}_*\circ \R\Phi_{\cP_{B}}\circ [g_1];$$
\item[(2)]
and the natural isomorphism of functors from $\D(\PB)$ to $\D(A)$:
$$f^*\circ \R\Psi_{\cP_B}\simeq \R\Psi_{\cP_A}\circ \pf_*$$
\end{itemize}
\end{prop}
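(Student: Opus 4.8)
The plan is to prove part (2) by a direct base-change computation and then to deduce part (1) from it by Mukai's inversion formula, so that the only geometric input needed is the compatibility of the two Poincar\'e bundles. Write $p_A,p_{\PA}$ for the projections of $A\times\PA$, $p_B,p_{\PB}$ for those of $B\times\PB$, and $q_A,q_{\PB}$ for those of $A\times\PB$. Set $\phi:=f\times\id_{\PB}\colon A\times\PB\to B\times\PB$ and $\psi:=\id_A\times\widehat f\colon A\times\PB\to A\times\PA$; since $f$ is a quotient, $\widehat f$ is a closed immersion, hence so is $\psi$, while $f$ and $\phi$ are flat. The one nontrivial geometric fact needed is the identity
$$\psi^*\cP_A\isom\phi^*\cP_B$$
on $A\times\PB$, which is just the defining property of the dual homomorphism $\widehat f$ (the see-saw/universal property of the Poincar\'e bundle).

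For (2), fix $N\in\D(\PB)$ and compute $\R\Psi_{\cP_A}(\widehat f_*N)=\R p_{A*}(p_{\PA}^*\widehat f_*N\otimes\cP_A)$. First I apply base change to the cartesian square with vertical maps $q_{\PB},p_{\PA}$ and horizontal maps $\psi,\widehat f$: since $p_{\PA}$ is flat (and $\widehat f$ proper) this gives $p_{\PA}^*\widehat f_*N\isom\psi_*q_{\PB}^*N$. The projection formula for the closed immersion $\psi$ yields $\psi_*q_{\PB}^*N\otimes\cP_A\isom\psi_*(q_{\PB}^*N\otimes\psi^*\cP_A)$, and the Poincar\'e identity replaces $\psi^*\cP_A$ by $\phi^*\cP_B$. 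Because $p_A\circ\psi=q_A$, pushing forward gives $\R\Psi_{\cP_A}(\widehat f_*N)\isom\R q_{A*}(q_{\PB}^*N\otimes\phi^*\cP_B)$. On the other hand, flat base change along $f$ in the cartesian square with vertical maps $q_A,p_B$ and horizontal maps $\phi,f$ gives $f^*\R p_{B*}\isom\R q_{A*}\phi^*$, whence $f^*\R\Psi_{\cP_B}(N)=\R q_{A*}(\phi^*p_{\PB}^*N\otimes\phi^*\cP_B)=\R q_{A*}(q_{\PB}^*N\otimes\phi^*\cP_B)$, using $p_{\PB}\circ\phi=q_{\PB}$. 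The two right-hand sides agree, and every isomorphism is natural, proving (2).

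To obtain (1) I feed (2) into Mukai's inversion formula \cite{muk2}, namely $\R\Psi_{\cP_A}\circ\R\Phi_{\cP_A}\isom(-1_A)^*[-g]$ and $\R\Phi_{\cP_A}\circ\R\Psi_{\cP_A}\isom(-1_{\PA})^*[-g]$ (and the analogues on $B$), together with the elementary commutations $\R\Phi_{\cP_A}\circ(-1_A)^*\isom(-1_{\PA})^*\circ\R\Phi_{\cP_A}$, $f\circ(-1_A)=(-1_B)\circ f$ and $\widehat f\circ(-1_{\PB})=(-1_{\PA})\circ\widehat f$. Precomposing (2) with $\R\Phi_{\cP_B}$ and using $\R\Psi_{\cP_B}\circ\R\Phi_{\cP_B}\isom(-1_B)^*[-g_1]$ turns the left-hand side into $(-1_A)^*f^*[-g_1]$; applying $\R\Phi_{\cP_A}$ on the left (and commuting it past $(-1_A)^*$) while using $\R\Phi_{\cP_A}\circ\R\Psi_{\cP_A}\isom(-1_{\PA})^*[-g]$ on the right, then cancelling the common automorphism $(-1_{\PA})^*$, gives $\R\Phi_{\cP_A}\circ f^*[-g_1]\isom\widehat f_*\circ\R\Phi_{\cP_B}[-g]$. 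Shifting by $[g+g_1]$ produces exactly $\R\Phi_{\cP_A}\circ f^*\circ[g]\isom\widehat f_*\circ\R\Phi_{\cP_B}\circ[g_1]$, which is (1); note that the difference $g-g_1=\dim\Ker f$ is precisely what makes the two shifts $[g]$ and $[g_1]$ fit.

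The main obstacle is bookkeeping rather than a deep difficulty: the content of (2) is concentrated in the single Poincar\'e identity $\psi^*\cP_A\isom\phi^*\cP_B$ and in checking that base change legitimately applies (which is why it is convenient that $\widehat f$ is a closed immersion while $f$ and $p_{\PA}$ are flat). The delicate point in (1) is keeping the shifts and the sign automorphisms $(-1)^*$ straight. A direct proof of (1) avoiding inversion would instead require the identity $\R(f\times\id_{\PA})_*\cP_A\isom(\id_B\times\widehat f)_*\cP_B[-(g-g_1)]$, and there the failure of naive cohomology-and-base-change along the jumping locus $\PB\subset\PA$ is the subtle step; routing (1) through (2) and Mukai inversion is therefore the cleaner path.
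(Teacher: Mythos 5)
Your proof is correct, but it runs in the opposite direction from the paper's. The paper notes that (1) and (2) are interchangeable by Fourier--Mukai equivalence and then proves (1) directly: after two applications of the projection formula, everything reduces to the identity $\R f_{\PA*}\cP_A[g]\simeq \widehat{f}_{B*}\cP_B[g_1]$ (Lemma \ref{lemmaPoincare}), whose proof is the real work there --- a split case $A=B\times K$, Poincar\'e reducibility, a rank-one/support argument, a normalization step via $\R p_{\PA*}\cP_A=\mathbb{C}_{0_{\PA}}[-g]$, and a separate Stein-factorization argument when $f$ has disconnected fibers. You instead prove (2) directly, where the only geometric input is the see-saw identity $(\id_A\times\widehat f)^*\cP_A\simeq(f\times\id_{\PB})^*\cP_B$; since this involves pullbacks of the Poincar\'e bundles rather than pushforwards, no cohomology-and-base-change subtlety along the jumping locus arises, and you then recover (1) by Mukai inversion with correct bookkeeping of the shifts and the $(-1)^*$'s (your computation checks out: cancelling $(-1_{\PA})^*$ and shifting by $[g+g_1]$ gives exactly the stated formula). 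One misstatement: your claim that $\widehat f$ is a closed immersion because $f$ is a quotient is false when $f$ has disconnected fibers --- then $\widehat f$ is only a finite morphism, an isogeny followed by an embedding --- but this is harmless, since your argument uses nothing beyond properness of $\widehat f$ and flatness of $f$ and $p_{\PA}$; in fact this makes your proof uniform in both cases, avoiding the paper's case distinction entirely. What the paper's longer route buys is Lemma \ref{lemmaPoincare} itself: an explicit description of $\R f_{\PA*}\cP_A$ as a rank-one sheaf supported on $B\times\PB$, which has independent interest, whereas your argument establishes the proposition without ever computing that pushforward.
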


\begin{rema}
Note that the direct image $\widehat{f}_*$ needs not to be derived for  a closed embedding and similarly, $f^*$ needs not to be derived for a smooth morphism.
\end{rema}
\begin{proof}
We note that $(2)$ is equivalent to $(1)$ by Fourier-Mukai equivalence. It suffices to prove (1).

For brevity, we will abuse the notation of pull-back and push-forward with its derived functors.
We consider the following commutative diagram:

\begin{eqnarray*}
\xymatrix{\ar@{}[drr]|{\square}
A\ar[d]_{f} &&\ar@{}[dr]|{\circlearrowleft}  A \times \PA\ar[d]_{f_\PA}\ar[ll]_{p_A}\ar[drr]^{p_\PA} && \\
 B && \ar@{}[dl]|{\circlearrowleft} \ar@{}[drr]|{\square} B \times \PA\ar[ll]_{\pi_B}\ar[rr]^{\pi_{\PA}}  && \PA \\
 && B \times \PB\ar[llu]^{p_B}\ar[rr]^{p_\PB}\ar[u]_{\widehat{f}_B} && \PB\ar[u]_{\widehat{f}},
}
\end{eqnarray*}

where each map is either the natural projection, dual map, or base change.
We then have
\begin{eqnarray*}\R\Phi_{\cP_A}(f^*\cF)&=& {p_{\PA}}_* (p_{A}^*f^*\cF\otimes \cP_A)\\
&\simeq &  {p_{\PA}}_* (f_{\PA}^* \pi_B^*\cF\otimes \cP_A)\\
&\simeq &  {\pi_{\PA}}_*  {f_{\PA}}_* (f_{\PA}^* \pi_B^*\cF\otimes \cP_A)\\
&\simeq &  {\pi_{\PA}}_* ( \pi_B^*\cF\otimes  {f_{\PA}}_* \cP_A) \;\;\;\;\; \textrm{(projection formula).}
\end{eqnarray*}
Similarly,
\begin{eqnarray*}
 \widehat{f}_*\R\Phi_{\cP_{B}}(\cF))&=& \widehat{f}_* {p_{\PB}}_* ( p_B^* \cF \otimes \cP_B)\\
&\simeq & {\pi_{\PA}}_* \widehat{f}_{B*} ({\widehat{f}_B}^{*} \pi_B^* \cF \otimes \cP_B)\\
&\simeq & {\pi_{\PA}}_* ( \pi_B^* \cF \otimes \widehat{f}_{B*} \cP_B) \;\;\;\;\; \textrm{(projection formula).}
\end{eqnarray*}
Thus if $f$ has connected fibers, we conclude the proof by  Lemma \ref{lemmaPoincare} below.

If $f$ has disconnected fibers, we consider the Stein factorization  of $f$: $$A\xrightarrow{g} B'\xrightarrow{\pi} B,$$
where $\pi$ is an isogeny between abelian varieties and $g$ is a fibration.

Let $\cF\in\D(B)$. Since $\pi$ is an isogeny, by \cite[Proposition 14.7.9 (b)]{BL}, we have $$\R\Phi_{\cP_{B'}}(\pi^*\cF)\simeq \widehat{\pi}_*\R\Phi_{\cP_{B}}(\cF).$$
Since $g$ is a fibration, we also have $$\widehat{g}_*\R\Phi_{\cP_{B'}}(\pi^*\cF)[g_1]\simeq \R\Phi_{\cP_{A}}(g^*\pi^*\cF)[g].$$
Thus we have \begin{eqnarray*}
\widehat{f}_*\R\Phi_{\cP_{B}}(\cF)[g_1]&\simeq& \widehat{g}_*\widehat{\pi}_*\R\Phi_{\cP_{B}}(\cF)[g_1]\\
&\simeq& \widehat{g}_*\R\Phi_{\cP_{B'}}(\pi^*\cF)[g_1]\\
&\simeq& \R\Phi_{\cP_{A}}(g^*\pi^*\cF)[g]\\
&\simeq& \R\Phi_{\cP_{A}}(f^*\cF)[g].
\end{eqnarray*}
This completes the proof.
\end{proof}

\begin{lemm}\label{lemmaPoincare}Let $f: A\rightarrow B$ be a quotient of abelian varieties with connected fibers. Keeping the notation as in Proposition \ref{fm}, we have $$\R {f_{\PA}}_* \cP_A \circ [g] \simeq  {\widehat{f}_{B*}} \cP_B \circ [g_1] \in \D(B\times\PA).$$
\end{lemm}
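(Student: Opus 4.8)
The plan is to compute the complex $C:=\R f_{\PA*}\cP_A$ on $B\times\PA$ and to show that it is a single sheaf, placed in degree $d:=g-g_1$, canonically isomorphic to $\widehat f_{B*}\cP_B$; the asserted isomorphism then follows after the shift, since $g-d=g_1$. Throughout set $K=\Ker f$, an abelian variety of dimension $d$, so that $\Pf\colon\PB\hookrightarrow\PA$ is a closed immersion and $\PB\subset\PA$ has codimension $d$. First I would record two structural inputs. The square in Proposition \ref{fm} containing $u:=\id_A\times\Pf$, $v:=f\times\id_{\PB}$ and $w:=\widehat f_B=\id_B\times\Pf$ is cartesian (its fibre product is $A\times\PB$), and the bilinearity of the Poincaré bundle gives the key identity $u^*\cP_A\simeq v^*\cP_B$ on $A\times\PB$. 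Second, because $f$ is a homomorphism of abelian varieties, $\Omega^1_{A/B}$ is the trivial bundle of rank $d$ (its fibre is the cotangent space of $K$ at the origin), whence the relative dualizing sheaf satisfies $\omega_{f_{\PA}}\simeq\cO_{A\times\PA}$.

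Next I would locate the support and amplitude of $C$. As $f_{\PA}=f\times\id_{\PA}$ is flat and proper of relative dimension $d$, cohomology and base change, restricting in the $\PA$-direction to $B\times\{\alpha\}$, identifies the derived restriction of $C$ there with $\R f_*P_\alpha$. For $\alpha\notin\PB$ the class $\alpha$ restricts to a nontrivial element of $\Pic^0$ on every fibre of $f$ (a translate of $K$), so $\R f_*P_\alpha=0$; thus $\Supp C\subseteq B\times\PB$, a locus of codimension $d$, while $C$ has cohomology only in degrees $[0,d]$.

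To force concentration in degree $d$ I would invoke Grothendieck--Serre duality for $f_{\PA}$. Using $\omega_{f_{\PA}}[d]\simeq\cO[d]$ and $\cP_A^{-1}\simeq(\id_A\times(-1_{\PA}))^*\cP_A$, duality reads
\[ \R\mathcal{H}om_{B\times\PA}\bigl(C,\cO_{B\times\PA}\bigr)\ \simeq\ j^*C[d],\qquad j:=\id_B\times(-1_{\PA}). \]
The right-hand side has cohomology in degrees $[-d,0]$; the left-hand side, because each $\mathcal{H}^i(C)$ is supported in codimension $\ge d$ and hence has $\mathcal{E}xt^p(\cdot,\cO)=0$ for $p<d$, has cohomology in degrees $\ge 0$ (hyperext spectral sequence). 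Hence $\R\mathcal{H}om(C,\cO)$ is a sheaf in degree $0$; being supported in codimension $\ge d$, its dual lies in degrees $\ge d$, and since $C\simeq C^{\vee\vee}$ by biduality of the perfect complex $C$, comparison with the amplitude $[0,d]$ pins $C$ to degree $d$. Write $C=\cH[-d]$ with $\cH:=\R^d f_{\PA*}\cP_A$, a Cohen--Macaulay sheaf of codimension $d$ supported on $B\times\PB$.

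Finally I would identify $\cH$. Base change along the cartesian square, the projection formula and $u^*\cP_A\simeq v^*\cP_B$ give $\L w^*C\simeq\cP_B\otimes\pi_B^*\R f_*\cO_A$; comparing top cohomology (and using $\R^d f_*\cO_A\simeq\cO_B$) yields $w^*\cH\simeq\cP_B$, so $\cH$ restricts to $\cP_B$ on the reduced subvariety $B\times\PB$. The main obstacle is precisely this last step: the restriction, the Cohen--Macaulay property, and even the full pattern of Tor-sheaves do not by themselves exclude a nonreduced thickening of $\cH$ along $B\times\PB$ (the modules $\cO_Y/(t_1^2,t_2,\dots,t_d)$ and $\cO_Y/(t_1,\dots,t_d)$ have identical restriction and identical Tor-ranks along $B\times\PB$). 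To rule this out I would use that $f\colon A\to B$ is a $K$-torsor, hence trivial in the étale (equivalently analytic) topology: locally over $B$ the computation reduces to the split case $A\simeq B\times K$, where Mukai's formula $\R\Phi_{\cP_K}(\cO_K)\simeq\C_{\widehat 0}[-d]$ shows that $\cH$ is the honest push-forward from the reduced $B\times\PB$, namely $\cH\simeq\widehat f_{B*}\cP_B$. Assembling, $C=\widehat f_{B*}\cP_B[-d]$, and after the shift this is exactly $\R f_{\PA*}\cP_A[g]\simeq\widehat f_{B*}\cP_B[g_1]$.
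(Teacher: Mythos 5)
Your proof is correct, and its decisive input is the same as the paper's --- trivialize the $K$-torsor $f$ after an \'etale base change and invoke Mukai's computation $\R p_{\PK*}\cP_K\simeq \C_{0_{\PK}}[-(g-g_1)]$ --- but the surrounding architecture is genuinely different. The paper splits the torsor \emph{globally}: Poincar\'e reducibility gives an isogeny $\widetilde B\to B$ with $A\times_B\widetilde B\simeq \widetilde B\times K$; its Step 1 computes the split case directly, and its Step 2 pulls back along the \'etale map $\widetilde B\times\PA\to B\times\PA$ and descends, which leaves an ambiguity by some $L\in\Pic^0(B)$ that is then killed using $\R p_{\PA*}\cP_A=\C_{0_{\PA}}[-g]$. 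You instead prove concentration in degree $d=g-g_1$ via Grothendieck duality plus codimension-of-support bounds, identify the restriction $w^*\cH\simeq\cP_B$ by base change along the closed immersion $\widehat f_B$ and the projection formula, and use \'etale-local triviality of the torsor only to exclude a nonreduced thickening of the support --- a danger you rightly isolate as the crux (the paper handles it implicitly, since scheme-theoretic support can be checked after the \'etale pullback). Your route buys a cleaner identification, with no residual $\Pic^0(B)$-twist to normalize away, and makes the Cohen--Macaulay structure of the transform visible; the paper's route is shorter and, because its trivializing cover is a group isogeny, can quote \cite[(14.2)]{BL} verbatim for the compatibility of Poincar\'e bundles, whereas your trivialization by an arbitrary local section $U\to A_U$ needs a small seesaw argument that you gloss: on $U\times K\times\PA$ the pullback of $\cP_A$ agrees with the pullback of $\cP_K$ under $\mathrm{pr}_K\times r$ (where $r\colon \PA\to\PK$ is the restriction map) only up to a line bundle coming from $U\times\PA$, which is harmless by the projection formula, and reducedness then follows from the scheme-theoretic identity $r^{-1}(0_{\PK})=\PB$ (exactness of the dual sequence and smoothness of $r$). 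Note finally that once this local computation is carried out, your duality step becomes redundant: it already gives both the degree concentration and the reducedness, so your argument could be streamlined to essentially the paper's two steps with a local, rather than global, trivialization.
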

\begin{proof}
Let $K$ be the kernel of $f$.

\medskip
\noindent{\bf Step 1.}
{\em We first assume that $A=B\times K$ and $f: A\rightarrow B$ is the natural projection.}

Then $\cP_A=p_{13}^*\cP_{B}\otimes p_{24}^*\cP_{K}$ on $A\times \PA=B\times K\times \PB\times\PK$ and $\widehat{f}: \PB\rightarrow \PB\times\PK$ is the
closed embedding $x\rightarrow (x, 0_{\PK})$ for $x\in \PB$.
We consider
\begin{eqnarray*}
\xymatrix{
A\times\PA=B\times K\times \PB\times\PK\ar[d]^{f_{\PA}=(p_1,p_3, p_4)}\ar[rr]^(.7){p_{24}} && K\times \PK\ar[d]^{p_{\PK}}\\
B\times\PA=B\times\PB\times\PK\ar[d]^{p_B}\ar[rr]^(.7){q_{\PK}} &&\PK\\
B\times\PB\ar@/^1pc/[u]^{\widehat{f}_B}}
\end{eqnarray*}
Therefore,
\begin{eqnarray*} f_{\PA_*} \cP_A&=& f_{\PA*} ( p_{13}^*\cP_{B}\otimes   p_{24}^*\cP_{K})\\
 &=& f_{\PA*} ( f_{\PA}^* p_{B}^*\cP_{B}\otimes   p_{24}^*\cP_{K})\\
 &=& p_B^*\cP_B\otimes f_{\PA*}   p_{24}^*(\cP_{K})\\
&=& p_B^*\cP_B\otimes q_{\PK}^*  p_{\PK*}(\cP_{K})\\
&=& p_B^*\cP_B\otimes q_{\PK}^*\mathbb{C}_{0_{\PK}}[g_1-g]\\
&\simeq & (\widehat{f}_{B*} \cP_B)[g_1-g].
\end{eqnarray*}

\noindent{\bf Step 2.} We assume that $f: A \to B$ has connected fibers.
By Poincar\'{e}'s reducibility  theorem, we can take $\pi_B: \widetilde{B}\rightarrow B$ to be an isogeny such that the fiber product $\widetilde{f}: \widetilde{A}:=A \times_{B}\widetilde{B}\rightarrow \widetilde{B}$ is isomorphic to the projection $\widetilde{B}\times K\rightarrow  \widetilde{B}$.

We then have the following commutative diagram:
\begin{eqnarray*}
\xymatrix{
\widetilde{A}\times \widehat{\widetilde{A}}\ar[d]^{f_{\widehat{\widetilde{A}}}} && \widetilde{A}\times\widehat{A}\ar[d]^{h_A}\ar[ll]_{\pi_1}\ar[rr]^{\tau_1}&& A\times\PA\ar[d]^{f_{\PA}}\\
\widetilde{B}\times \widehat{\widetilde{A}} && \widetilde{B}\times \PA\ar[ll]_{\pi_2}\ar[rr]^{\tau_2} && B\times\PA\\
\widetilde{B}\times \widehat{\widetilde{B}}\ar@{^{(}->}[u]^{\widehat{f}_{\widetilde{B}}} && \widetilde{B}\times\PB\ar@{^{(}->}[u]^{h_B}\ar[ll]_{\pi_3}\ar[rr]^{\tau_3} && B\times\PB\ar@{^{(}->}[u]_{\widehat{f}_B}.}
\end{eqnarray*}


By \cite[(14.2)]{BL}, one sees that $\pi_1^* \cP_{\widetilde{A}} = \tau_1^* \cP_A$ and $\pi_3^* \cP_{\widetilde{B}} = \tau_3^* \cP_B$.

Together with  base change and Step 1, it is easy to verify that
\begin{eqnarray*}
\tau_2^* f_{\PA*} \cP_A [g] & \simeq \pi_2^* f_{\widehat{\widetilde{A}}*} \cP_{\widetilde{A}}[g] \\
& \simeq  \pi_2^* \widehat{f}_{{\widetilde{B}}*} \cP_{\widetilde{B}}[g_1] \\
& \simeq  \tau_2^* \widehat{f}_{{{B}}*} \cP_{B}[g_1]. \\
\end{eqnarray*}

Thus $f_{\PA*} \cP_A [g-g_1]$ is a coherent sheaf supported on the image of $\widehat{f}_{{{B}}}$ and of rank $1$ on each point of its support.
Therefore, we have $$ f_{\PA*} \cP_A\simeq \big(\widehat{f}_{B*} \cP_B\otimes L\big)[g_1-g]$$ for some $L\in \Im(\Pic^0(B)\rightarrow\Pic^0(B\times\PA))$.
Finally, by the fact that $\R p_{\PA*}\cP_A=\mathbb{C}_{0_{\PA}}[-g]$, we have $L\cong \cO_{B\times\PA}$.
\end{proof}

\begin{prop}\label{pureness}Let $f: A\rightarrow B$ be a quotient of abelian varieties and let $\cF$ be a GV-sheaf on $B$.
Then $\widehat{\R\Delta(f^*\cF)}:=\R\Phi_{\cP_A}(\R\Delta(f^*\cF))[g]$ is a coherent sheaf supported on $\widehat{f}(\PB)$. Moreover,
if $\cF$ is M-regular on $B$, then $\widehat{\R\Delta(f^*\cF)}$ is a pure sheaf supported on $\widehat{f}(\PB)$.
\end{prop}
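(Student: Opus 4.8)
The plan is to identify $\widehat{\R\Delta(f^*\cF)}$ with the pushforward under $\widehat f$ of the corresponding object on $\PB$, and then to read off coherence, support and purity from Remark \ref{properties}(1) applied to $\cF$ on $B$.

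First I would check that the anti-dualizing functor commutes with $f^*$. Since $A$ and $B$ are abelian varieties their canonical sheaves are trivial, so $\R\Delta(f^*\cF)=\R\mathcal{H}om(f^*\cF,\cO_A)$ and, writing $\R\Delta_B(\cF)=\R\mathcal{H}om(\cF,\cO_B)$, the morphism $f$ being smooth (hence flat) and $\cF$ being perfect on the smooth variety $B$, derived duality commutes with pull-back:
$$\R\Delta(f^*\cF)\simeq f^*\R\Delta_B(\cF).$$
Applying $\R\Phi_{\cP_A}[g]$ and invoking the base-change isomorphism of Proposition \ref{fm}(1), namely $\R\Phi_{\cP_A}\circ f^*\circ[g]\simeq\widehat f_*\circ\R\Phi_{\cP_B}\circ[g_1]$, with the object $\R\Delta_B(\cF)$, I obtain
$$\widehat{\R\Delta(f^*\cF)}=\R\Phi_{\cP_A}(\R\Delta(f^*\cF))[g]\simeq\widehat f_*\bigl(\R\Phi_{\cP_B}(\R\Delta_B(\cF))[g_1]\bigr)=\widehat f_*\,\widehat{\R\Delta_B(\cF)},$$
where $\widehat{\R\Delta_B(\cF)}:=\R\Phi_{\cP_B}(\R\Delta_B(\cF))[g_1]$.

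Now $\cF$ is a GV-sheaf on $B$, so Remark \ref{properties}(1) tells me that $\widehat{\R\Delta_B(\cF)}$ is an honest coherent sheaf on $\PB$, supported on $V^0(\cF)$. The dual map $\widehat f\colon\PB\to\PA$ is finite onto its image, the abelian subvariety $\widehat f(\PB)$, so $\widehat f_*$ is exact and carries this coherent sheaf to a coherent sheaf on $\PA$ supported on $\widehat f(V^0(\cF))\subseteq\widehat f(\PB)$; this gives the first assertion. For purity, when $\cF$ is M-regular Remark \ref{properties}(1) gives moreover that $\widehat{\R\Delta_B(\cF)}$ is \emph{torsion-free} on $\PB$. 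Since $\widehat f\colon\PB\to\widehat f(\PB)$ is a finite dominant morphism of integral varieties, the pushforward of a torsion-free sheaf stays torsion-free on the integral variety $\widehat f(\PB)$ of dimension $g_1$, and a nonzero torsion-free sheaf on an integral variety is pure of dimension equal to that of the variety. Hence $\widehat{\R\Delta(f^*\cF)}\simeq\widehat f_*\widehat{\R\Delta_B(\cF)}$ is pure of dimension $g_1$, supported on $\widehat f(\PB)$.

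The main obstacle is the first displayed isomorphism, i.e. the commutation $\R\Delta(f^*\cF)\simeq f^*\R\Delta_B(\cF)$ together with the correct matching of the cohomological shifts $[g]$ and $[g_1]$ in Proposition \ref{fm}(1); this is where flatness of $f$, perfectness of $\cF$ and triviality of the canonical bundles all enter, and where one must be careful to land on the normalization $\widehat{\R\Delta(-)}$ used in Remark \ref{properties}. Once this identification is established, coherence and support are immediate from the GV hypothesis, and purity reduces to the elementary facts that finite dominant maps of integral varieties preserve torsion-freeness and that torsion-free sheaves on integral varieties are pure.
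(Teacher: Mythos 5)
Your proof is correct and follows essentially the same route as the paper: identify $\widehat{\R\Delta(f^*\cF)}$ with $\widehat{f}_*\widehat{\R\Delta(\cF)}$ via Proposition \ref{fm}, read off coherence (resp.\ torsion-freeness) from Remark \ref{properties}(1), and conclude support and purity from pushing forward along the finite map $\widehat{f}$ onto its image. The only difference is that you make explicit two points the paper leaves implicit, namely the commutation $\R\Delta(f^*\cF)\simeq f^*\R\Delta(\cF)$ (flatness of $f$ plus triviality of the canonical bundles) and the fact that finite pushforward of a torsion-free sheaf on $\PB$ is pure on the integral variety $\widehat{f}(\PB)$.
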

\begin{proof}
Since $\cF$ is GV-sheaf, $\widehat{\R\Delta(\cF)}:=\R\Phi_{\cP_{B}}(\R\Delta(\cF))[g_1]$ is a coherent sheaf on $\PB$ by Remark \ref{properties}.1.
By Lemma \ref{fm}, it follows that $$\R\Phi_{\cP_A}(\R\Delta(f^*\cF))[g]=\R\pf_*(\widehat{\R\Delta(\cF)})=\pf_*(\widehat{\R\Delta(\cF)})$$ is also a coherent sheaf
supported on $\pf(\PB)$. If $\cF$ is M-regular on $B$, then $\widehat{\R\Delta(\cF)}$ is torsion-free by  Remark \ref{properties}.1.
Hence $\widehat{\R\Delta(f^*\cF)}$ is a pure sheaf supported on $\pf(\PB)$.
\end{proof}

\begin{coro}\label{Homzero}Let $f: A\rightarrow B$ be a quotient of abelian varieties. Let $\cF_1$ be a M-regular sheaf on $B$ and
let $\cF_2$ be a GV-sheaf on $A$. Assume that $\pf(\PB)$ is not contained in $V^0(\cF_2)$, then $\Hom_{A}(f^*\cF_1, \cF_2)=0$.
\end{coro}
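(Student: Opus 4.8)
The plan is to transport the computation of $\Hom_A(f^*\cF_1,\cF_2)$ to the dual abelian variety $\PA$, where the two relevant sheaves have transparent supports, and then finish by a purity argument. First I would use that $A$ is an abelian variety, so $\omega_A\simeq\cO_A$ and $\R\Delta(\cdot)=\R\mathcal{H}om(\cdot,\cO_A)$ is a contravariant duality on perfect complexes; since $A$ is smooth, every coherent sheaf is perfect and $\R\Delta\circ\R\Delta\simeq\id$ on $f^*\cF_1$ and on $\cF_2$. Composing this duality with the Fourier--Mukai equivalence $\R\Phi_{\cP_A}$ gives
$$\R\Hom_A(f^*\cF_1,\cF_2)\simeq\R\Hom_A(\R\Delta(\cF_2),\R\Delta(f^*\cF_1))\simeq\R\Hom_{\PA}\big(\R\Phi_{\cP_A}\R\Delta(\cF_2),\R\Phi_{\cP_A}\R\Delta(f^*\cF_1)\big).$$
By the definition of $\widehat{\R\Delta(\cdot)}$, the right-hand term is $\R\Hom_{\PA}(\widehat{\R\Delta(\cF_2)}[-g],\widehat{\R\Delta(f^*\cF_1)}[-g])$, so taking $H^0$ yields a natural identification $\Hom_A(f^*\cF_1,\cF_2)\simeq\Hom_{\PA}(\widehat{\R\Delta(\cF_2)},\widehat{\R\Delta(f^*\cF_1)})$.

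Next I would identify these two sheaves on $\PA$ together with their supports. Since $\cF_2$ is a GV-sheaf, Remark \ref{properties}.1 gives that $\widehat{\R\Delta(\cF_2)}$ is a coherent sheaf supported on $V^0(\cF_2)$. Since $\cF_1$ is M-regular, Proposition \ref{pureness} gives both that $f^*\cF_1$ is a GV-sheaf (so $\widehat{\R\Delta(f^*\cF_1)}$ sits in degree $0$ and the $H^0$ above is genuinely a $\Hom$ of sheaves) and that $\widehat{\R\Delta(f^*\cF_1)}$ is a \emph{pure} sheaf supported on $\pf(\PB)$. As $f$ is a quotient of abelian varieties, $\pf(\PB)$ is an abelian subvariety of $\PA$, hence irreducible of dimension $g_1=\dim B$; purity therefore means $\widehat{\R\Delta(f^*\cF_1)}$ admits no nonzero subsheaf supported on a proper closed subset of $\pf(\PB)$.

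Finally I would conclude. Any $\phi\in\Hom_{\PA}(\widehat{\R\Delta(\cF_2)},\widehat{\R\Delta(f^*\cF_1)})$ has image that is simultaneously a subsheaf of $\widehat{\R\Delta(f^*\cF_1)}$ and a quotient of $\widehat{\R\Delta(\cF_2)}$, hence supported on $V^0(\cF_2)\cap\pf(\PB)$. The hypothesis $\pf(\PB)\not\subseteq V^0(\cF_2)$, together with the irreducibility of $\pf(\PB)$, forces this intersection to be a proper closed subset, of dimension $<g_1$; by purity the only subsheaf so supported is $0$, whence $\phi=0$. I expect the main obstacle to be bookkeeping rather than conceptual: one must check that the Grothendieck--Serre duality and the Fourier--Mukai equivalence are compatible on the nose (tracking the degree shifts built into $\widehat{\R\Delta(\cdot)}$) and confirm that $f^*\cF_1$ is genuinely GV so that $\widehat{\R\Delta(f^*\cF_1)}$ is a sheaf concentrated in degree $0$; once these are secured, the purity step is immediate.
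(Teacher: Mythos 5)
Your proposal is correct and follows essentially the same route as the paper: dualize via $\R\Delta$, transport through Mukai's equivalence to identify $\Hom_A(f^*\cF_1,\cF_2)$ with $\Hom_{\PA}(\widehat{\R\Delta(\cF_2)},\widehat{\R\Delta(f^*\cF_1)})$, and then invoke Proposition \ref{pureness} and Remark \ref{properties} to conclude by comparing supports. The only difference is that you spell out the final purity step (the image of any homomorphism would be a subsheaf supported on the proper closed subset $V^0(\cF_2)\cap\pf(\PB)$, hence zero), which the paper leaves implicit.
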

\begin{proof}
We have
\begin{eqnarray*}
&&\Hom_{A}(f^*\cF_1, \cF_2)=\Hom_{\D(A)}(f^*\cF_1, \cF_2)\\
&\simeq&  \Hom_{\D(A)}(\R\Delta(\cF_2),  \R\Delta(f^*\cF_1))\\
&\simeq&  \Hom_{\D(\PA)}(\widehat{\R\Delta(\cF_2)}, \widehat{\R\Delta(f^*\cF_1)})\;\;\textrm{by \cite[Corollary 2.5]{muk2}}\\
&=& \Hom_{\PA}(\widehat{\R\Delta(\cF_2)}, \widehat{\R\Delta(f^*\cF_1)}).
\end{eqnarray*}
By Proposition \ref{pureness} and Remark \ref{properties},
$\widehat{\R\Delta(f^*\cF_1)}$ is a pure sheaf supported on $\pf(\PB)$ and $\widehat{\R\Delta(\cF_2)}$ is supported on $V^0(\cF_2)$.
 Since $\pf(\PB)$ is not contained in $V^0(\cF_2)$, we have $$\Hom_{\PA}(\widehat{\R\Delta(\cF_2)}, \widehat{\R\Delta(f^*\cF_1)})=0.$$ We conclude the proof.
\end{proof}

\section{A decomposition theorem for $f_*\omega_X$}
Let $f: X\rightarrow A$ be a generically finite morphism onto its image and $A$ is an abelian variety. In this section we study the sheaf $f_*\omega_X$.

By the work of Green-Lazarsfeld (\cite{gl2}) and
 Simpson \cite{sim}, we know that $V^i(\omega_X,f)=V^i(f_*\omega_X)$ is a union of torsion translated abelian subvarieties of $\PA$.
 Moreover, each irreducible component of
$V^i(\omega_X, f)$ in $\PA$ is of codimension $\geq i$ (\cite{gl1}). In particular $f_*\omega_X$ is a GV-sheaf on $A$.
However $f_*\omega_X$ often fails to be M-regular, namely $V^i(f_*\omega_X)$ often has an irreducible component of codimension $i$ in $\PA$.
In particular, if $f$ is generically finite onto $A$, then $\cO_A$ is a direct summand of  $f_*\omega_X$.
\begin{defi} Let $f: X\rightarrow A$ be a generically finite morphism to an abelian variety $A$. When $f$ is surjective onto $A$, then
we define $\cW_{X/A}$ or simply $\cW$ to be the direct summand of $f_*\omega_X$ so that $f_*\omega_X= \cO_A \oplus \cW_{X/A}$. When $f$ is not surjective onto $A$, we define $\cW_{X/A}$ to be $f_*\omega_X$.
\end{defi}

 Instead, we are interesting in the M-regularity of $\cW$.
This is actually the main
technical difficulty to study birational geometry of varieties of maximal Albanese dimension.  Hence we consider the following set which measures how far $\cW_{X/A}$
is from being M-regular.

\begin{defi} Let $\cF$ be a GV-sheaf on $A$, of which cohomological support loci consists of torsion translated subtori.
We define $S^i(\cF)$
to be the set of torsion translated subtori of $\PA$ consisting of irreducible components of  $V^i(A,\cF) $ of codimension $i$ in $\PA$.  We define
 $S(\cF):=\cup_{i=0}^{n} S^i(\cF)$ and $S(\cF)^*:=\cup_{i=1}^{n}S^i(\cF)$.
For brevity, We use $S^i_X$ (resp. $S_X$, $S_X^*$) to denote $S^i(\cW_{X/A})$ (resp. $S(\cW_{X/A})$, $S^*(\cW_{X/A}$).
\end{defi}


Therefore, by definition, $S(\cF)^*$ is empty if and only if $\cF$ is M-regular.
Note that $V^i(\cW_{X/A})=V^i(f_*\omega_X)$ unless $q(X)=\dim X$ and $i=\dim X$,
and hence every irreducible component of $S_X$ has dimension $>0$ by \cite[Theorem 3]{el}.

\noindent
{\bf Convention.} Replacing $X$ by a finite \'etale covering, we may and do assume that all irreducible component of $S_X$ pass through the origin.

For each component $\PA_k \in S^i_X$ passing through the origin, we have the dual abelian variety $A_k$ together with a surjection $p_k: A \to A_k$.  We consider a modification of the Stein factorization:
\begin{eqnarray}\label{stein}
\xymatrix{
X\ar[r]^f\ar[d]_{q_{k}}& A\ar[d]^{p_{k}}\\
Y_{k}\ar[r]^{h_{k}} & A_k}
\end{eqnarray}
where $q_{k}$ is a  fibration, $Y_{k}$ is a smooth projective variety. By \cite[Theorem 3.1]{CDJ},
we know that $R^iq_{k*}\omega_X=\omega_{Y_{k}}$ and $V^0(h_{k*}\omega_{Y_{k}})=\PA_{k}$, where $i=\codim(\PA_k, \PA)$.

\begin{lemm}\label{induction}For each $\PA_{k}\in S^i_X$ passing through the origin of $\PA$ and each $j\geq 0$, we have
 \begin{eqnarray*}
 S_{Y_{k}}^j=\{\PA \in S^{i+j}_X\mid \PA \subset \PA_{k}\}.
 \end{eqnarray*}
 \end{lemm}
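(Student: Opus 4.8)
The plan is to descend everything to the quotient $A_k$ and compare the cohomology of $\omega_X$ with that of $\omega_{Y_k}$ through the fibration $q_k$. Set $\phi_k:=p_k\circ f=h_k\circ q_k\colon X\to A_k$. Since the subtorus $\PA_k$ is dual to $p_k$, a point $\alpha\in\PA_k$ is, under the identification $\PA_k=\widehat{A_k}$, the same as a point $\bar\alpha$ with $P_\alpha=p_k^*P_{\bar\alpha}$, so $f^*P_\alpha=\phi_k^*P_{\bar\alpha}$; as $f$ is generically finite we have $Rf_*\omega_X=f_*\omega_X$, whence
$$V^m(f_*\omega_X)\cap\PA_k=V^m(\omega_X,\phi_k)\subset\PA_k.$$
Because $\PA_k$ has codimension $i$ in $\PA$, a subtorus contained in $\PA_k$ has codimension $i+j$ in $\PA$ exactly when it has codimension $j$ in $\PA_k$. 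Using the Green--Lazarsfeld bound $\codim V^{i+j}(\omega_X,f)\ge i+j$, one checks that $\{\PA\in S^{i+j}_X\mid\PA\subset\PA_k\}$ is precisely the set of codimension-$j$ components of $V^{i+j}(\omega_X,\phi_k)$. Thus the lemma reduces to identifying those codimension-$j$ components with $S^j_{Y_k}$.

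First I would split the cohomology with Koll\'ar's theorem. Tensoring the decomposition $Rq_{k*}\omega_X\simeq\bigoplus_l R^lq_{k*}\omega_X[-l]$ by the line bundle $h_k^*P_{\bar\alpha}$ and applying $R\Gamma$ gives, for every $\bar\alpha\in\PA_k$,
$$H^{i+j}(X,\omega_X\otimes\phi_k^*P_{\bar\alpha})\simeq\bigoplus_{l} H^{i+j-l}\big(Y_k,R^lq_{k*}\omega_X\otimes h_k^*P_{\bar\alpha}\big),$$
so that $V^{i+j}(\omega_X,\phi_k)=\bigcup_l V^{i+j-l}(R^lq_{k*}\omega_X,h_k)$. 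The distinguished summand is $l=i$: by \cite[Theorem 3.1]{CDJ} we have $R^iq_{k*}\omega_X=\omega_{Y_k}$, so this term is exactly $V^j(\omega_{Y_k},h_k)$, whose codimension-$j$ components form $S^j_{Y_k}$ (for $j\ge1$ the summand $\cO_{A_k}$ does not affect $V^j$, and for $j=0$ one has $V^0(\cW_{Y_k/A_k})=\PA_k$, matching the single component $\PA_k$, which is the component we started with).

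It remains to bound the terms with $l<i$, and this is the heart of the argument. The key input is generic vanishing for the higher direct images: each $R^lq_{k*}\omega_X$ is a GV-sheaf with respect to $h_k$, i.e. $\codim_{\PA_k}V^s(R^lq_{k*}\omega_X,h_k)\ge s$ for all $s$. Granting this, for $l\le i-1$ and $s=i+j-l$ we get $\codim\ge i+j-l\ge j+1$, so the terms $l<i$ produce only loci of codimension $>j$ in $\PA_k$. Since nonvanishing loci combine as a union with no cancellation, the codimension-$j$ components of $V^{i+j}(\omega_X,\phi_k)$ coincide with those arising from $l=i$, namely the codimension-$j$ components of $V^j(\omega_{Y_k},h_k)$, i.e. $S^j_{Y_k}$. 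This is the desired equality $S^j_{Y_k}=\{\PA\in S^{i+j}_X\mid\PA\subset\PA_k\}$. The hard part will be precisely the generic vanishing statement for the lower graded pieces $R^lq_{k*}\omega_X$ relative to the generically finite (but possibly non-finite) map $h_k$, and verifying that it transports correctly to the $V^s(\cdot,h_k)$ computed upstairs on $Y_k$; once that strict codimension estimate is in hand, the codimension shift by $i$ and the identification of the top term are formal bookkeeping.
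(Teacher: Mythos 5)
Your argument is correct and is essentially the paper's own proof: both split $H^{i+j}(X,\omega_X\otimes q_k^*h_k^*P)$ via Koll\'ar's decomposition, identify the $l=i$ piece (where $R^iq_{k*}\omega_X=\omega_{Y_k}$) with $H^j(Y_k,\omega_{Y_k}\otimes h_k^*P)$, and exclude codimension-$j$ contributions from the pieces $l<i$ by a generic vanishing bound. The input you flag as the remaining ``hard part'' is precisely Hacon's theorem \cite[Corollary 4.2]{Hac} (each $h_{k*}R^lq_{k*}\omega_X$ is a GV-sheaf on $A_k$), together with Koll\'ar's results \cite{ko1}, \cite{ko2}, which give $R^sh_{k*}R^lq_{k*}\omega_X=0$ for $s>0$ so that the loci computed on $Y_k$ and on $A_k$ agree --- the paper settles both points by citation.
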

\begin{proof}
 By Koll\'ar's results \cite[Proposition 7.6]{ko1} and \cite[Theorem 3.1]{ko2}, we have  $R^iq_{k*}\omega_X=\omega_{Y_{k}}$, and for $P\in\PA_k$, we have
 \begin{eqnarray*}
 H^{i+j}(X, \omega_X\otimes q_k^*h_k^*P) &\simeq & \sum_{s+t=i+j}H^s(Y_k, R^tq_{k*}\omega_X\otimes h_k^*P)\\
 &\simeq & \sum_{s+t=i+j}H^s(A_k, h_{k*}R^tq_{k*}\omega_X\otimes P).
\end{eqnarray*}
Hence $S_{Y_{k}}^j  \subseteq \{\PA \in S^{i+j}_X\mid \PA \subset \PA_{k}\}$.
On the other hand, by Hacon's generic vanishing theorem \cite[Corollary 4.2]{Hac}, $h_{k*}R^tq_{k*}\omega_X$ is a GV-sheaf on $A_k$ for each $t\geq 0$.
Thus $S_{Y_{k}}^j   \supseteq \{\PA \in S^{i+j}_X\mid \PA \subset \PA_{k}\}$.
\end{proof}

\begin{theo}\label{decomposition}Assume that each component of $S_X$ passes through the origin of $\PA$. Then for each $\PA_k \in S_X$,  there exists a nontrivial M-regular sheaf $\cF_{k}$ on
$A_{k}$, which is a direct summand of $h_{k*}\omega_{Y_{k}}$. Moreover we have an isomorphism
$$ \cW_{X/A} \simeq \bigoplus_k p_{k}^*\cF_{k}.$$
\end{theo}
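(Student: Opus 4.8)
The plan is to transport the statement to the Fourier--Mukai dual side, where the decomposition becomes a splitting of a single coherent sheaf according to the codimension of its support, and then to combine an induction on $\dim X$ (via the Stein factorizations \eqref{stein} and Lemma \ref{induction}) with the Hom-vanishing of Corollary \ref{Homzero}. Set $\widehat{\R\Delta(\cW_{X/A})}$, which by Remark \ref{properties}.1 is a coherent sheaf on $\PA$ supported on $V^0(\cW_{X/A})$. Since $\R\Phi_{\cP_A}$ is an equivalence and $\R\Delta$ is a duality, and since $\widehat{\R\Delta(-)}$ is additive and sends $p_k^*\cF_k$ to the pure sheaf $\widehat{p}_{k*}\widehat{\R\Delta(\cF_k)}$ on $\PA_k$ (Proposition \ref{pureness}), producing the asserted isomorphism $\cW_{X/A}\simeq\bigoplus_k p_k^*\cF_k$ is equivalent to exhibiting a splitting of $\widehat{\R\Delta(\cW_{X/A})}$ into pure sheaves $\cM_k$ supported on the $\PA_k$, each of which is the transform of an M-regular sheaf. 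The components $\PA_k\in S^i_X$ are exactly the codimension-$i$ components of $\Supp\mathcal{E}xt^i(\widehat{\R\Delta(\cW_{X/A})},\cO_{\PA})$, because $\mathcal{E}xt^i(\widehat{\R\Delta(\cW_{X/A})},\cO_{\PA})\simeq(-1_{\PA})^*\R^i\Phi_{\cP_A}(\cW_{X/A})$ by Remark \ref{properties}.2.

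I would argue by induction on $\dim X$, the M-regular case $S^*_X=\varnothing$ being trivial. First I would peel off the top stratum: if $\PA\in S^0_X$, then in \eqref{stein} one has $Y_k=X$, $A_k=A$, $p_k=\id$, and $\cF_k$ is the M-regular summand of $\cW_{X/A}$ with $V^0(\cF_k)=\PA$. For a maximal (largest-dimensional) component $\PA_k\in S^{i_k}_X$ with $i_k\geq 1$ I would use the Stein factorization \eqref{stein}, where $R^{i_k}q_{k*}\omega_X=\omega_{Y_k}$ and $V^0(h_{k*}\omega_{Y_k})=\PA_k$ by the cited results of \cite{CDJ}. Since $\dim Y_k=\dim A_k<\dim X$, the inductive hypothesis applies to $h_k$, and because Lemma \ref{induction} gives $S^0_{Y_k}=\{\PA_k\}$, the decomposition of $h_{k*}\omega_{Y_k}$ contains a unique summand with trivial quotient and full support: this is the nontrivial M-regular sheaf $\cF_k$ on $A_k$, a direct summand of $h_{k*}\omega_{Y_k}$, with $V^0(\cF_k)=\PA_k$. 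This defines all the $\cF_k$, hence the candidate pure pieces $\cM_k=\widehat{\R\Delta(\cF_k)}$.

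Next I would produce the splitting maps. The Kollár-type decomposition $Rq_{k*}\omega_X\simeq\bigoplus_j R^jq_{k*}\omega_X[-j]$ exhibits $\omega_{Y_k}[-i_k]$ as a derived direct summand of $Rq_{k*}\omega_X$; pushing forward by $p_k$, using $p_k\circ f=h_k\circ q_k$ together with $Rf_*\omega_X=f_*\omega_X$ and $Rh_{k*}\omega_{Y_k}=h_{k*}\omega_{Y_k}$, and invoking the adjunctions for the smooth morphism $p_k$ (so that $p_k^!=p_k^*[i_k]$), I obtain natural maps relating $p_k^*\cF_k$ and $\cW_{X/A}$ whose composite is an isomorphism over the generic point of $\PA_k$. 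To see that the resulting maps assemble into an internal direct sum I would apply Corollary \ref{Homzero}: for two incomparable components, say $\PA_j\not\subset\PA_k$, taking $\cF_1=\cF_j$, $f=p_j$ and $\cF_2=p_k^*\cF_k$ (a GV-sheaf with $\overline{V^0(p_k^*\cF_k)}=\PA_k$) gives $\Hom_A(p_j^*\cF_j,p_k^*\cF_k)=0$. Dually, this is the vanishing of the extension classes between the pure strata $\widehat{p}_{j*}\cM_j$ and $\widehat{p}_{k*}\cM_k$ whose supports are incomparable, which is what forces the codimension filtration of $\widehat{\R\Delta(\cW_{X/A})}$ to split off these strata.

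Finally I would conclude by assembling these splittings and checking exhaustion: the cokernel of $\bigoplus_k p_k^*\cF_k\hookrightarrow\cW_{X/A}$ has strictly smaller invariant $S$ (its surviving components all lie strictly inside some $\PA_k$, by Lemma \ref{induction}) and is killed by induction, or equivalently one matches all cohomological support loci $V^i(\cW_{X/A})$ component by component. The main obstacle I anticipate is twofold and concentrated in the purity of the graded pieces and in the nested components: one must show that the codimension-$i_k$ part of $\widehat{\R\Delta(\cW_{X/A})}$ is genuinely torsion-free on $\PA_k$ --- equivalently that $\cF_k$ has no lower-dimensional or embedded defect and is truly M-regular --- and one must split off the strata coming from components $\PA_j\subsetneq\PA_k$, to which Corollary \ref{Homzero} does not directly apply. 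Both points are handled by feeding the structure of $Y_k$ back through Lemma \ref{induction}: the sub-loci inside $\PA_k$ are precisely the $S^{j}_{Y_k}$, so the inductive decomposition of $h_{k*}\omega_{Y_k}$ already records them, and purity follows from the torsion-freeness in Remark \ref{properties}.1 together with the GV property. This interplay between the global splitting (via Corollary \ref{Homzero}) and the inductive control of the nested loci (via Lemma \ref{induction}) is the delicate heart of the argument.
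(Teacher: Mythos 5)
Your overall skeleton coincides with the paper's: induction on $\dim X$ through the Stein factorizations (\ref{stein}), Lemma \ref{induction} to produce the sheaves $\cF_k$, and Corollary \ref{Homzero} to separate components. But the two steps you treat as essentially formal are exactly where the paper has to do real work, and your substitutes for them fail. First, the splitting maps. From Koll\'ar's derived decomposition you do get $h_{k*}\omega_{Y_k}[-i_k]$ (where $i_k=\codim(\PA_k,\PA)$) as a derived direct summand of $\R p_{k*}f_*\omega_X$, and Grothendieck duality ($\R p_{k*}\dashv p_k^!$, with $p_k^!=p_k^*[i_k]$) converts the projection into an honest sheaf map $f_*\omega_X\to p_k^*h_{k*}\omega_{Y_k}$. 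The would-be section, however, goes through the other adjunction $p_k^*\dashv \R p_{k*}$ and lands in $\Hom_{\D(A)}\bigl(p_k^*h_{k*}\omega_{Y_k}[-i_k], f_*\omega_X\bigr)=\Ext^{i_k}_A\bigl(p_k^*h_{k*}\omega_{Y_k}, f_*\omega_X\bigr)$, which is not a map of sheaves; so no retraction--section pair results, and a composite that is only ``an isomorphism over the generic point of $\PA_k$'' would not split off a direct summand in any case. The paper avoids this entirely by a geometric device: it forms the smooth fiber product $Z_k=Y_k\times_{A_k}A$, observes that $g_k\colon X\to Z_k$ is generically finite and surjective so that $\omega_{Z_k}$ is a direct summand of $g_{k*}\omega_X$, and then flat base change along the Cartesian square gives $f_{k*}\omega_{Z_k}=p_k^*h_{k*}\omega_{Y_k}$; this is (\ref{decom1}), and it is the input your adjunction formalism cannot reproduce.

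Second, and more seriously, the exhaustion step. Your claim that the cokernel of $\bigoplus_k p_k^*\cF_k\hookrightarrow \cW_{X/A}$ has ``strictly smaller invariant $S$'', with surviving components strictly inside some $\PA_k$, is unjustified and misidentifies the danger: since the remainder is a direct summand of $\cW_{X/A}$, the elements of its $S^*$ are among the $\PA_m\in S^*_X$ themselves, and the real risk is that some $\PA_m$ survives in the remainder with positive multiplicity. Your dimension induction cannot kill it, because the remainder is merely a sheaf on $A$, not the pushforward of a canonical sheaf from a lower-dimensional variety; and ``matching the loci $V^i$ component by component'' is a statement about supports, whereas what must be matched is generic multiplicities. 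This is precisely what Step 3 of the paper's proof does: for general $P\in\PA_m$ (with $d_r=\codim\PA_m$) it computes $h^{d_r}(A, f_*\omega_X\otimes p_m^*P)=h^0(A_m, h_{m*}\omega_{Y_m}\otimes P)$ via Koll\'ar's theorems, shows this number would strictly exceed $h^0(A_m,\cF_m\otimes P)$ if $\PA_m$ belonged to $S^*(\cW_{d_N})$, and derives a contradiction with the equality (\ref{step1equality}) $h^0(Y_m,\omega_{Y_m}\otimes h_m^*P)=h^0(A_m,\cF_m\otimes P)$ recorded from the inductive decomposition of $h_{m*}\omega_{Y_m}$. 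Nothing in your proposal plays the role of this counting argument; relatedly, the full-support piece (your ``top stratum'' $S^0_X$) cannot be defined at the outset as ``the M-regular summand of $\cW_{X/A}$'' --- the existence of such a summand is exactly what this step proves. A final, minor point: nested components are not an obstruction to Corollary \ref{Homzero} if, as in the paper, one splits components off in order of decreasing codimension, so that the component currently being split is never contained in a previously split one; your alternative of re-using the decomposition of $h_{k*}\omega_{Y_k}$ to account for sub-loci would instead double-count the sheaves attached to them.
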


\begin{proof}
For each $\PA_{k}\in S^i_X$, we use the notation in the diagram (\ref{stein}) and define $Z_{k}:=Y_{k}\times_{A_{k}}A$.
Then $Z_{k}\rightarrow Y_{k}$ is a smooth abelian fibration and $Z_{k}$ is a smooth projective variety.
We recall that  the dimension of a general fiber of $q_{k}$ is $i$ (see for instance the proof of \cite[Theorem 3]{el}).
Hence we have the natural morphism $$g_{k}: X\rightarrow Z_{k},$$ which is generically finite and surjective. Considering the natural morphisms
\begin{eqnarray}\label{diagram2}
\xymatrix{
X\ar[dr]_{q_k}\ar@/^2pc/[rr]^{f}\ar[r]^{g_{k}}& Z_{k}\ar@{}[dr]|{\square}\ar[d]^{r_k}\ar[r]^{f_{k}}& A\ar[d]^{p_k}\\
& Y_k\ar[r]^{h_k}& A_k,}
\end{eqnarray} we have
\begin{eqnarray}\label{decom1}
f_*\omega_X&=&f_{k*}g_{k*}\omega_X\nonumber\\
&=& f_{k*}(\omega_{Z_{k}}\oplus \cQ_{k})\nonumber\\
&=& p_{k}^*(h_{k*}\omega_{Y_{k}})\oplus f_{k*}\cQ_{k},
\end{eqnarray}
where the last equality holds because the right part of diagram (\ref{diagram2}) is Cartesian.

We now let $0<d_N<d_{N-1}<\cdots <d_2<d_1< n$ be the positive numbers such that $S^{d_i}_X$ is not empty.

 Note that Theorem \ref{decomposition} holds in dimension $1$. We argue by induction on $\dim X$ . Thus
 we suppose that Theorem \ref{decomposition} holds for varieties of dimension $<\dim X$.

\medskip
\noindent{\bf Step 1.} Define $\cF_i$ on $A_i$
for each $\PA_i\in S^*_X$.

Suppose $\PA_m\in S^{d_r}_X$, we know by Lemma \ref{induction} that,
\begin{eqnarray*}S_{Y_{m}}&=&S^0_{Y_{m}} \cup S^{d_{r-1}-d_{r}}_{Y_{m}} \cup  \ldots  \cup S^{d_{2}-d_{r}}_{Y_{m}} \cup S^{d_{1}-d_{r}}_{Y_{m}}  \\
&=& \{\PA_m\} \cup \{ \PA \in \cup_{j < r} S^{d_j}_X \mid  \PA \subset \PA_m\}.
\end{eqnarray*}

By induction, we know that there exists a nontrivial M-regular sheaf $\cF_m$ on $A_m$ such that $h_{m*}\omega_{Y_m}$ is a direct sum of
$\cF_m$ with sheaves pulled back from the dual of elements of $S_{Y_m}^*$.
Therefore, for $P\in \PA_m$ general, we have
\begin{eqnarray}\label{step1equality}\dim H^0(Y_m, \omega_{Y_m}\otimes h_m^*P)=\dim H^0(A_m, \cF_m\otimes P),
\end{eqnarray}
and moreover, by (\ref{decom1}), $p_m^*\cF_m$ is a direct summand of $\cW_{X/A}$.

\medskip
\noindent{\bf Step 2.} Derive the decomposition.

We start with $S^{d_1}_X$. By Step 1, for each $\PA_k\in S^{d_1}_X$, there exists a coherent sheaf $\cW_k$ on $A$ such that we have the decomposition:$$\cW_{W/A}=p_{k}^*\cF_{k}\oplus \cW_k.$$

For distinct components $\PA_{k_1},\PA_{k_2}$ in $S^{d_1}_X$, we now consider  the decomposition of identity
\begin{multline}\label{decom2}
\Id: p_{k_2}^*\cF_{k_2}\hookrightarrow \cW_{X/A}=p_{k_1}^*\cF_{k_1}\oplus f_{k_1*}\cW_{k_1}
\twoheadrightarrow p_{k_2}^*\cF_{k_2}.
\end{multline}
By Corollary \ref{Homzero}, we know that $$\Hom_A(p_{k_2}^*\cF_{k_2}, p_{k_1}^*\cF_{k_1})=0.$$ Hence $p_{k_2}^*\cF_{k_2}$ is a direct summand of $\cW_{k_1}$ and thus  $p_{k_1}^*\cF_{k_1}\oplus p_{k_2}^*\cF_{k_2} $ is a direct summand of $\cW_{X/A}$.

Continuing in this way, we finally get a decomposition:
\begin{eqnarray*}\cW_{X/A}=\bigoplus_{\PA_{k}\in S^{d_1}_X}p_{k}^*\cF_{k}\bigoplus \cW_{d_1}.
\end{eqnarray*}

Now we apply induction on $d_i$. Suppose that we have the decomposition
\begin{eqnarray*}\cW_{X/A}=\bigoplus_{\PA_{k}\in\cup_{j \leq r-1} S^{d_j}_X}p_{k}^*\cF_{k}\bigoplus \cW_{d_{r-1}},
\end{eqnarray*}
where $r\leq N$.
Then for $\PA_m\in S^{d_r}_X$, we consider \begin{multline*}
\Id: p_{m}^*\cF_{m}\hookrightarrow \bigoplus_{\PA_{k}\in\cup_{j \leq r-1} S^{d_j}_X}p_{k}^*\cF_{k}\bigoplus \cW_{d_{r-1}}
\twoheadrightarrow p_{m}^*\cF_{m},
\end{multline*}
we again conclude by (\ref{Homzero}) that $$\Hom_{A}(p_{m}^*\cF_{m}, \bigoplus_{\PA_{k}\in\cup_{j \leq r-1} S^{d_j}_X}p_{k}^*\cF_{k})=0.$$
It follows that $p_{m}^*\cF_{m}$ is indeed a direct summand of $\cW_{d_{r-1}}$.
As before, we then get the decomposition for each element of $S^{d_r}_X$: $$ \cW_{X/A}=\bigoplus_{\PA_{k}\in \cup_{j \le r} S^{d_j}_X}p_{k}^*\cF_{k}\bigoplus \cW_{d_{r}}.$$
By induction, we end up with the decomposition
\begin{eqnarray}\label{decomN} \cW_{X/A} = \bigoplus_{\PA_{k}\in S_X^*} p_{k}^*\cF_{k}\bigoplus \cW_{d_{N}}.
\end{eqnarray}

\noindent
{\bf Step 3.} Show that $\cW_{d_N}$ is either M-regular on $A$ if $V^0(f_*\omega_X)=\PA$ or trivial otherwise.

As a direct summand of $\cW_{X/A}$, $\cW_{d_N}$ is a GV-sheaf (possibly trivial). It suffices to show that $S^*(\cW_{d_N})=\emptyset$.

Assume to the contrary that  $S^{*}(\cW_{d_N})\subset S^*_X$ is not empty. We then pick $\PA_m\in S^{d_r}(\cW_{d_N})$ for some $d_r$.

For $P\in\PA_m$ general, we have
\begin{eqnarray*}h^{d_r}(A, f_*\omega_X\otimes p_m^*P)&=&h^{d_r}(A, \cW_{X/A}\otimes p_m^*P)\\
&=& \sum_{\PA_{k}\in S_X^*} h^{d_r}(A, p_{k}^*\cF_{k}\otimes p_m^*P)+h^{d_r}(A, \cW_{d_N}\otimes p_m^*P)\\
&\geq &  h^{d_r}(A, p_{m}^*(\cF_{m}\otimes P))+h^{d_r}(A, \cW_{d_N}\otimes p_m^*P)\\
&>&  h^{d_r}(A, p_{m}^*(\cF_{m}\otimes P))\\
&=& \sum_{i+j=d_r} h^i(A_m, R^{j}p_{m*}p_{m}^*(\cF_{m} \otimes P)) \\
&=&  h^0(A_m, R^{d_r}p_{m*}p_{m}^*(\cF_{m} \otimes P)) \\
&=& h^0(A_m, \cF_m\otimes P),
\end{eqnarray*}
where the second equality holds because of (\ref{decomN}) and the last two equalities holds because for each $j \geq 0$, $R^{j}p_{m*}p_{m}^*\cF_{m}$ is a direct sum of copies of $\cF_m$  and hence is M-regular on $A_m$, and in particular $R^{d_r}p_{m*}p_{m}^*(\cF_{m})=\cF_{m}$.

On the other hand, since $R^jp_{m*}f_*\omega_X$ is a GV-sheaf on $A_m$, we have
\begin{eqnarray*}
h^{d_r}(A, f_*\omega_X\otimes p_m^*P)&=& h^0(A_m, R^{d_r}p_{m*}f_*\omega_X\otimes P)\\
&=&h^0(A_m, h_{m*}R^{d_r}q_{m*}\omega_X\otimes P)\\
&=& h^0(A_m, h_{m*}\omega_{Y_m}\otimes P).
\end{eqnarray*}

Combining all the (in)equalities, we get that, for $P\in \PA_m$ general, $$h^0(A_m, h_{m*}\omega_{Y_m}\otimes P)>h^0(A_m, \cF_m\otimes P),$$
which is a contradiction to (\ref{step1equality}). Therefore, $S^*(\cW_{d_N})=\emptyset$.
\end{proof}

\begin{theo}\label{decompositionwithtorsion}Let $f: X\rightarrow A$ be a generically finite morphism. Then, for each $P_{k}+\PA_{k}\in S_X$,  there exists a nontrivial M-regular sheaf $\cF_{k}$ on
$A_{k}$ such that we have an isomorphism
$$\cW_{X/A}\simeq \bigoplus_{P_{k}+\PA_{k}\in S^*_X}P_{k}^{-1}\otimes p_{k}^*\cF_{k}.$$
\end{theo}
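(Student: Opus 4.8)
The plan is to reduce to Theorem \ref{decomposition} by passing to a finite \'etale cover that untwists all the torsion, and then to descend the resulting decomposition. Let $N$ be a positive integer annihilating every torsion point $P_k$ occurring in $S_X$, and let $\pi=[N]_A\colon A\to A$ be multiplication by $N$, a Galois \'etale isogeny with group $\Gamma:=A[N]$ acting by translations $t_\gamma$. Form the fibre product $X':=X\times_{A,\pi}A$, with projection $g\colon X'\to X$ \'etale and $f'\colon X'\to A$ the induced map; since $g$ is \'etale we have $\omega_{X'}\simeq g^*\omega_X$, and flat base change gives $f'_*\omega_{X'}\simeq \pi^*f_*\omega_X$. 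Using $\pi_*P_\beta=\bigoplus_{\alpha\in[N]^{-1}(\beta)}P_\alpha$ together with the projection formula, one computes $H^i(A,f'_*\omega_{X'}\otimes P_\beta)=\bigoplus_{\alpha\in[N]^{-1}(\beta)}H^i(A,f_*\omega_X\otimes P_\alpha)$, so that $V^i(\omega_{X'},f')=[N]\bigl(V^i(\omega_X,f)\bigr)$. Since $[N]$ contracts each coset $P_k+\PA_k$ to the subtorus $\PA_k$, every component of $S_{X'}$ passes through the origin, and Theorem \ref{decomposition} applies: $\cW_{X'/A}\simeq\bigoplus_{l}p_l^{*}\cF'_l$, the sum running over the distinct linear parts $\PA_l$ that arise, with $\cF'_l$ M-regular on $A_l$.

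The next step is to make this decomposition $\Gamma$-equivariant and to descend it. The key point is that the decomposition is canonical: by Corollary \ref{Homzero} the summands $p_l^{*}\cF'_l$ are pairwise $\Hom$-orthogonal and each is characterised intrinsically as the maximal direct summand whose $V^0$ is contained in $\PA_l$. The deck transformations of $X'\to X$ cover the translations $t_\gamma$ ($\gamma\in\Gamma$) of $A$, endowing $f'_*\omega_{X'}$ with a $\Gamma$-equivariant structure; since $t_\gamma^{*}P_\alpha\simeq P_\alpha$, each $t_\gamma^{*}$ preserves every $V^i$ and hence, by canonicity, preserves the decomposition and fixes each summand $p_l^{*}\cF'_l$. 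Decomposing this $\Gamma$-action into characters $\chi\in\widehat{\Gamma}\simeq\PA[N]$ then refines each summand into $\chi$-isotypic pieces.

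Finally I would descend along the $\Gamma$-torsor $\pi\colon A\to A$. Taking $\Gamma$-invariants of $\pi_*f'_*\omega_{X'}=\bigoplus_{\chi\in\PA[N]} f_*\omega_X\otimes P_\chi$ recovers $f_*\omega_X$ as the trivial-character part. Pushing down each $\chi$-isotypic piece of $p_l^{*}\cF'_l$ and extracting the invariant part produces, via the identification $\widehat{\Gamma}\simeq\PA[N]$, a summand of the form $P_\chi^{-1}\otimes p_l^{*}\cF_{l,\chi}$ with $\cF_{l,\chi}$ an M-regular sheaf on $A_l$ obtained by descending the corresponding eigen-piece of $\cF'_l$. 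The torsion twist is pinned down by cohomological support: the descended summand has $V^0$ equal to $P_\chi+\PA_l$, which must be one of the prescribed components $P_k+\PA_k$, forcing $P_\chi=P_k$ and $\cF_{l,\chi}=\cF_k$. Reassembling over all $k$ yields $\cW_{X/A}\simeq\bigoplus_{P_k+\PA_k\in S_X^{*}}P_k^{-1}\otimes p_k^{*}\cF_k$, with the codimension-zero contribution (present precisely when $V^0(\cW_{X/A})=\PA$) descending to its M-regular or trivial piece exactly as in Step 3 of Theorem \ref{decomposition}.

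The main obstacle is this descent. The delicate phenomenon is that distinct components $P_k+\PA_k$ sharing a common linear part $\PA_k$ are merged by $[N]$ into a single summand $p_l^{*}\cF'_l$ upstairs, and must be re-separated downstairs; this separation is precisely what the decomposition of the $\Gamma$-action into characters achieves. The crux is to verify that the character decomposition is compatible with the $p_l$-pullback structure, so that each eigen-piece is again of the form $p_l^{*}(\text{M-regular})$, and that the resulting character matches the torsion point $P_k$ dictated by the cohomological support. Once the equivariance and this character--torsion dictionary are in place, the extraction of summands reduces to the same $\Hom$-orthogonality argument (Corollary \ref{Homzero}) used in Theorem \ref{decomposition}, where orthogonality now holds for purely dimensional reasons, since a higher-dimensional subtorus cannot lie in a translate of a lower-dimensional one.
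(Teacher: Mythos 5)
Your reduction step is sound and is exactly how the paper begins: base change along $[N]_A$, check $V^i(\omega_{X'},f')=[N]\bigl(V^i(\omega_X,f)\bigr)$ so that all components of $S_{X'}$ pass through the origin, and apply Theorem \ref{decomposition} upstairs. The genuine gap is in your descent. You justify it by claiming the decomposition $\cW_{X'/A'}\simeq\bigoplus_l p_l^*\cF'_l$ is canonical, each summand being ``the maximal direct summand whose $V^0$ is contained in $\PA_l$,'' with Hom-orthogonality holding ``for purely dimensional reasons.'' Both claims fail when components of $S_{X'}$ are nested, and nesting does occur: Lemma \ref{induction} describes precisely the components of $S^{i+j}_X$ contained in a given $\PA_k\in S^i_X$, and for instance $X=Y\times C$, with $Y$ the Ein--Lazarsfeld threefold and $C$ a genus-two curve double covering an elliptic curve, has elements of $S^2_X$ strictly contained in elements of $S^1_X$. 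Corollary \ref{Homzero} gives only one-sided vanishing: $\Hom(p_l^*\cF'_l,p_m^*\cF'_m)=0$ when $\PA_l\not\subseteq\PA_m$, but when $\PA_l\subsetneq\PA_m$ such Homs are typically nonzero (already $\Hom_A(p_l^*L,\,p_l^*L\otimes N)=H^0(A,N)\neq 0$ for $L$ ample on $A_l$ and $N$ ample on $A$, and $p_l^*L\otimes N$ is M-regular on $A$; in the product example above the Hom between the nested summands contains $\mathrm{id}\otimes H^0(L_4)$). Consequently the splitting is not unique --- it can be altered by unipotent automorphisms pushing a small-support summand into a larger-support one --- so the $\Gamma$-equivariant structure has no reason to preserve your chosen summands, and your intrinsic characterization picks out the wrong (and non-canonical) object; only sub-sums of the form $\bigoplus_{\PA_m\supseteq T}p_m^*\cF'_m$ are canonical subsheaves. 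This could likely be repaired in characteristic zero by averaging projectors along that canonical filtration, but you do not do it.

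Second, even granting equivariant summands, the statement you yourself isolate as ``the crux'' --- that each $\chi$-isotypic piece of the descended sheaf is again of the form $P_\chi^{-1}\otimes p_l^*(\textrm{M-regular on } A_l)$, with $P_\chi$ matching the prescribed torsion point $P_k$ --- is essentially the content of the theorem, and it is asserted rather than proved; nothing in your outline controls how the character decomposition interacts with the $p_l$-pullback structure when $\cF'_l$ is not simple. The paper sidesteps both problems by descending on the Fourier--Mukai side instead: by Proposition \ref{fm}, $\cM_{X'}:=\widehat{\R\Delta(\cW_{X'/A'})}\simeq\widehat{\pi}_*\cM_X$, so $\cM_X$ is automatically a direct summand of $\widehat{\pi}^*\cM_{X'}$; Theorem \ref{decomposition} together with Remark \ref{properties} makes $\cM_{X'}$ a direct sum of pure sheaves on subtori, hence $\cM_X$ is a direct sum of pure sheaves supported on torsion translates $-P_k+\PA_k$; then Mukai inversion and Proposition \ref{fm}(2) convert translation by $P_k$ into the twist $P_k^{-1}$ and pushforward along $\PA_k\hookrightarrow\PA$ into pullback along $p_k$, producing $P_k^{-1}\otimes p_k^*\cF_k$ with $\cF_k$ M-regular in one step, with no equivariance or character theory needed. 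As written, your proof is incomplete at its central step; either supply the two missing arguments or switch the descent to the transform side.
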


\begin{proof}
We take an \'etale cover $\pi: {A'}\rightarrow A$ such that, for the induced morphism ${X'} :=X\times_A {A'}\rightarrow {A'}$, every element of $S_{{X'}}$ contains
the origin of $\widehat{{A'}}$.

Note that $\cW_{{X'}/{A'}}\simeq \pi^*\cW_{X/A}$ and hence $\R\Delta\cW_{{X'}/{A'}}\simeq \pi^*\R\Delta\cW_{X/A}.$ We denote respectively by $\cM_{{X'}}$ and $\cM_X$  the sheaves $\widehat{\R\Delta(\cW_{{X'}/{A'}})}$
and $\widehat{\R\Delta(\cW_{X/A})}$.
Then, by Proposition \ref{fm}, $\cM_{{X'}} \simeq \widehat{\pi}_*\cM_X.$

By Theorem \ref{decomposition} and Remark \ref{properties}, we know that $\cM_{{X'}}$ is a direct sum of pure sheaves supported on abelian subvarieties of $\widehat{{A'}}$. Since $\widehat{\pi}$ is an isogeny between abelian varieties, $\cM_X$ is a direct summand of $\widehat{\pi}^*\widehat{\pi}_*\cM_X=\widehat{\pi}^*\cM_{{X'}}$. Hence $\cM_X$ is also a direct sum of pure sheaves supported on torsion translates of abelian subvarieties of $\widehat{A}$:
 $$\cM_X=\bigoplus_{k}\tau_{P_k}^*\iota_{k*}\cM_k,$$
 where $\tau_{P_k}$ is the translation by a torsion element $P_k\in\PA$, $\iota_k \colon \PA_k\hookrightarrow \PA$ is the embedding of an abelian subvariety of $\PA$, and $\cM_k$ is a torsion-free sheaf on $\PA_k$.

 Note that, by Fourier-Mukai duality, we have the formula:
 \begin{eqnarray}\label{dual} (-1_{A})^* \cW_{X/A}\simeq \R\Delta\R \Psi_{\cP_A}(\cM_X) \simeq \R\Delta\R \Psi_{\cP_A}( \bigoplus_{k}\tau_{P_k}^*\iota_{k*}\cM_k).
 \end{eqnarray}

 Since $\tau_{P_k}$ is a translation and by Proposition \ref{fm}, we have $$ \R \Psi_{\cP_A}(\tau_{P_k}^*\iota_{k*}\cM_k)\simeq P_k^{-1}\otimes\R \Psi_{\cP_A}(\iota_{k*}\cM_k)  \simeq P_k^{-1} \otimes p_k^*\R\Psi_{\cP_{A_k}}\cM_k.$$

 Since $p_k$ is a smooth morphism, by \cite[(3.17)]{huy}, we have
 \begin{eqnarray*}\R \Delta (P_k^{-1}\otimes p_k^*\R\Psi_{\cP_{A_k}}\cM_k)
              &\simeq &\R \Delta (p_k^*\R\Psi_{\cP_{A_k}}\cM_k) \otimes P_k\\
  &\simeq & p_k^*\big(\R\Delta (\R\Psi_{\cP_{A_k}}\cM_k)\big)\otimes P_k.
\end{eqnarray*}
We define $\cF_k:= \R\Delta(\R\Psi_{\cP_{A_k}}(\cM_k))$. Since $p_k^* \cF_k \otimes P_k$ is a direct summand of the coherent sheaf $\cW_{X/A}$, it follows that $\cF_k$ is a sheaf. Moreover,  $\R\Phi_{\cP_{A_k}}\R\Delta(\cF_k)[\dim A_k]\simeq (-1)^*_{A_k}\cM_k$ is a pure sheaf on $A_k$, hence $\cF_k$ is M-regular. This concludes the proof of Corollary \ref{decompositionwithtorsion}.
 %
\end{proof}

The following corollary is clear from the above theorem and will be used to prove a birational criterion for morphisms between varieties of maximal Albanese dimension in Section 5.
\begin{coro}\label{corollary-dualpure}Let $f: X\rightarrow A$ be a generically finite morphism. Assume that $\cQ$ is a direct summand of $f_*\omega_X$. Then
we have the decomposition for the sheaf $\widehat{\R\Delta(\cQ)}$:
$$\widehat{\R\Delta(\cQ)}\simeq \bigoplus_{P_k+\PA_k\in S(\cQ)}\cM_k, $$
where $\cM_k$ is a pure sheaf supported on $-P_k+\PA_k$.
\end{coro}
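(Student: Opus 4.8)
The plan is to transport the splitting of $f_*\omega_X$ through the functor $\widehat{\R\Delta}(-)=\R\Phi_{\cP_A}(\R\Delta(-))[g]$ and read off the statement on the dual abelian variety. First I would record that a direct summand of a GV-sheaf is again a GV-sheaf: writing $f_*\omega_X=\cQ\oplus\cQ'$, one has $V^i(\cQ)\subseteq V^i(f_*\omega_X)$ for all $i$, so the codimension estimate of Remark \ref{properties}.3 is inherited and every irreducible component of $V^i(\cQ)$ is a torsion translate of a subtorus; in particular $S(\cQ)$ is defined and $S(\cQ)\subseteq S_X$. Since $\R\Delta$ and $\R\Phi_{\cP_A}$ are additive exact functors, the splitting $f_*\omega_X=\cQ\oplus\cQ'$ induces a splitting in $\D(\PA)$, and as all three sheaves are GV their transforms are genuine sheaves by Remark \ref{properties}.1. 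Thus $\widehat{\R\Delta(\cQ)}$ is a direct summand of $\widehat{\R\Delta(f_*\omega_X)}$, and by Theorem \ref{decompositionwithtorsion} together with Proposition \ref{pureness} the latter is a direct sum $\bigoplus_k\cM_k$ of pure sheaves, with $\cM_k$ supported on the distinct irreducible translate $-P_k+\PA_k$ attached to $P_k+\PA_k\in S_X$.

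The heart of the argument is then a Krull--Schmidt step: to show that a direct summand of $\bigoplus_k\cM_k$ is again a direct sum of pure sheaves, one supported on a sub-collection of the $-P_k+\PA_k$. The key input is a $\Hom$-vanishing. If $\cM_k,\cM_l$ are pure on the distinct irreducible subvarieties $Z_k=-P_k+\PA_k$ and $Z_l=-P_l+\PA_l$, any nonzero map $\cM_k\to\cM_l$ has image a nonzero subsheaf of the pure sheaf $\cM_l$, hence pure of dimension $\dim Z_l$ and supported inside $Z_k\cap Z_l$; this forces $Z_l\subseteq Z_k$. Ordering the indices by non-increasing $\dim Z_k$ therefore makes $\operatorname{End}(\bigoplus_k\cM_k)$ lower triangular, with diagonal $\prod_k\operatorname{End}(\cM_k)$ and nilpotent strictly-lower-triangular ideal $I$. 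The summand $\widehat{\R\Delta(\cQ)}$ is cut out by an idempotent $e$; since $I$ is nilpotent, $e$ is conjugate to its image in $\prod_k\operatorname{End}(\cM_k)$, i.e. to a diagonal idempotent $\bigoplus_k e_k$. Hence $\widehat{\R\Delta(\cQ)}\simeq\bigoplus_k e_k\cM_k$, and each nonzero $e_k\cM_k$, being a summand of a pure sheaf supported on the irreducible $Z_k$, is again pure and supported on all of $-P_k+\PA_k$.

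Finally I would match the indices $k$ with $e_k\cM_k\neq0$ to $S(\cQ)$. By Remark \ref{properties}.2 and .3, $\Supp\cEx^i(\widehat{\R\Delta(\cQ)},\cO_{\PA})=(-1_{\PA})^*V^i(\cQ)=-V^i(\cQ)$. A pure summand $e_k\cM_k$ of codimension $i$ on $-P_k+\PA_k$ has vanishing lower Ext sheaves and $\cEx^i$ of full support on $Z_k$, so it contributes exactly the component $-P_k+\PA_k$; using $\PA_k=-\PA_k$, the translate $P_k+\PA_k$ is then a codimension-$i$ component of $V^i(\cQ)$, that is $P_k+\PA_k\in S(\cQ)$. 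Conversely every element of $S^i(\cQ)$ is a codimension-$i$ component of $-V^i(\cQ)$ and so must be accounted for by one such summand, giving exactly $\widehat{\R\Delta(\cQ)}\simeq\bigoplus_{P_k+\PA_k\in S(\cQ)}\cM_k$ with $\cM_k$ pure on $-P_k+\PA_k$. The main obstacle is the Krull--Schmidt step of the second paragraph: one must verify that distinct elements of $S_X$ genuinely give distinct irreducible supports, so that the endomorphism ring is triangular with nilpotent off-diagonal, and that a summand of a pure sheaf stays pure of full support; the sign and torsion-translate bookkeeping in the last paragraph is routine but must be tracked carefully so as to land on $-P_k+\PA_k$ rather than its negative.
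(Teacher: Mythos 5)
Your proof is correct and takes essentially the same route as the paper: the paper simply declares the corollary ``clear from the above theorem,'' i.e.\ it is meant to follow from the decomposition $\widehat{\R\Delta(f_*\omega_X)}\simeq\bigoplus_k\cM_k$ into pure sheaves given by Theorem \ref{decompositionwithtorsion}, and your Hom-vanishing/idempotent (Krull--Schmidt) argument is precisely the justification the paper leaves implicit. One minor imprecision: the asserted equality $\Supp\cEx^i(\widehat{\R\Delta(\cQ)},\cO_{\PA})=-V^i(\cQ)$ is false in general (take $\cQ=\cO_A$ and $i<g$, where the left side is empty but $V^i(\cO_A)=\{0\}$); for a GV-sheaf only the codimension-$i$ components of the two sides agree, which is the standard fact you actually use, so the bookkeeping in your last paragraph goes through once stated in that form.
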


\begin{theo}Let $f: X\rightarrow A$ be a generically finite morphism. Then there exists an abelian Galois \'etale cover
$\pi_A: \widetilde{A}\rightarrow A$ with the base change $\widetilde{f}: \widetilde{X}:=X\times_A\widetilde{A}\rightarrow \widetilde{A}$ such that $K_{\widetilde{X}}$ is globally generated away from the exceptional locus of $\widetilde{f}$.
\end{theo}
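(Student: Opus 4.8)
The plan is to reduce, via the decomposition theorem, to proving global generation of the pushforward sheaf on the abelian variety, and only at the end to descend this to $\wX$. First I would invoke the decomposition theorem (Theorem \ref{decompositionwithtorsion}, in the form stated in the introduction) to write $f_*\omega_X \isom \bigoplus_k (p_k^*\cF_k\ot P_k)$, where $p_k\colon A\to A_k$ are quotients of abelian varieties (allowing the trivial quotient, which accounts for the $\cO_A$ summand when $f$ is surjective), each $\cF_k$ is M-regular on $A_k$, and each $P_k$ is a torsion line bundle on $A$. The whole purpose of the cover will be to \emph{simultaneously} trivialize the torsion twists $P_k$ and turn each M-regular summand into a globally generated one.

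For this I would take $\pi_A$ to be a multiplication map $[n]_A\colon A\to A$, which is an abelian Galois \'etale cover with group $A[n]$. Two facts drive the choice of $n$. First, since $[n]_A^*P_k\isom P_k^{\ot n}$ and $P_k$ is torsion, choosing $n$ divisible by the orders of all the $P_k$ kills every torsion twist. Second, for each M-regular $\cF_k$ I claim that $[n]_{A_k}^*\cF_k$ is globally generated once $n$ is divisible by a suitable integer $n_k$. Granting both and choosing $n$ divisible by all the $n_k$ and all the torsion orders, flat base change along the \'etale $\pi_A$ gives $\wf_*\omega_{\wX}\isom\pi_A^*f_*\omega_X$, and using the homomorphism identity $p_k\circ[n]_A=[n]_{A_k}\circ p_k$ together with $P_k^{\ot n}\isom\cO_A$ I obtain
$$\wf_*\omega_{\wX}\isom\bigoplus_k p_k^*\big([n]_{A_k}^*\cF_k\big),$$
each summand being the pullback of a globally generated sheaf, hence globally generated; therefore $\wf_*\omega_{\wX}$ is globally generated on $\wA$.

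The crux is the claim that an M-regular sheaf becomes globally generated after pullback by a suitable multiplication map. Here I would use that M-regular sheaves are continuously globally generated (Pareschi--Popa, cf.\ \cite{reg1}). Continuous global generation, together with coherence, produces finitely many $\beta_1,\dots,\beta_r\in\PA_k$ lying in the dense open locus where $h^0(\cF_k\ot P_\beta)$ is minimal, such that the twisted evaluation $\bigoplus_i H^0(\cF_k\ot P_{\beta_i})\ot P_{\beta_i}^{-1}\to\cF_k$ is surjective. Over that locus the spaces $H^0(\cF_k\ot P_\beta)$ form a vector bundle, so joint surjectivity becomes an open condition on the parameter tuple $(\beta_i)$; since $A_k$ is complete the non-surjectivity locus projects to a closed subset of the parameter space, whose complement is a non-empty open subset of a power of $\PA_k$. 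As torsion points are Zariski dense, this open set contains a torsion configuration, i.e.\ one with all $\beta_i\in\PA_k[n_k]$ for some $n_k$; then $H^0([n_k]_{A_k}^*\cF_k)=\bigoplus_{\alpha\in\PA_k[n_k]}H^0(\cF_k\ot P_\alpha)$ contains enough twisted sections to generate $\cF_k$ at every point, which is exactly global generation of $[n_k]_{A_k}^*\cF_k$. I expect this uniform passage from continuous to honest global generation---forcing a single finite cover to work at every point of the abelian variety at once---to be the main obstacle.

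Finally I would descend global generation from $\wf_*\omega_{\wX}$ to $\omega_{\wX}=K_{\wX}$. Over the open set of $\wA$ where $\wf$ is finite, namely the complement of $\wf(\Exc(\wf))$, I would apply the standard fact that global generation of a pushforward $g_*\cG$ at a point $y$ forces global generation of $\cG$ at every point of $g^{-1}(y)$; this holds because for $g$ finite the fibre $(g_*\cG)\ot k(y)$ surjects onto $\bigoplus_{x\in g^{-1}(y)}\cG\ot k(x)$, so a global section nonvanishing in the fibre of the pushforward restricts to one nonvanishing at each $x$. This yields that $K_{\wX}$ is globally generated at every point lying over that open set. A non-exceptional point whose fibre also meets $\Exc(\wf)$ is isolated in its fibre, so one separates it by an open neighbourhood on which $\wf$ is finite and applies the same fact locally; hence $K_{\wX}$ is globally generated away from $\Exc(\wf)$, completing the proof.
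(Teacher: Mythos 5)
Your proposal is correct and takes essentially the same route as the paper: decompose $f_*\omega_X$ via Theorem \ref{decompositionwithtorsion} into summands $P_k^{-1}\otimes p_k^*\cF_k$ with $\cF_k$ M-regular, pass to an abelian \'etale cover trivializing the torsion twists and making each pulled-back summand globally generated, identify $\wf_*\omega_{\wX}\simeq\pi_A^*f_*\omega_X$ by flat base change, and descend global generation to $K_{\wX}$ away from $\Exc(\wf)$. The only difference is that where the paper simply cites \cite[Proposition 3.1]{D2} for the existence of the cover, you reprove that input directly (continuous global generation of M-regular sheaves plus density of torsion points, realized by a multiplication map $[n]_A$), which is in substance Debarre's own argument; your final descent step is also spelled out more carefully than the paper's one-line assertion, and both are sound.
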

\begin{proof}
By Theorem \ref{decompositionwithtorsion}, we have $\cW_{X/A}\simeq \bigoplus_{P_{k}+\PA_{k}\in S_X}P_{k}^{-1}\otimes p_{k}^*\cF_{k}$, where $\cF_k$ is M-regular on $A_k$. By \cite[Proposition 3.1]{D2}, there exists an abelian Galois \'etale cover $\pi_A: \widetilde{A}\rightarrow A$ such that $\pi_A^*(P_{k}^{-1}\otimes p_{k}^*\cF_{k})$ is globally generated. Considering the base change:
\begin{eqnarray*}
\xymatrix{
\widetilde{X}\ar@{}[drr]|\square\ar[d]^{\pi}\ar[rr]^{\widetilde{f}} && \widetilde{A}\ar[d]^{\pi_A}\\
X\ar[rr]_f && A,}
\end{eqnarray*}
we have $\cW_{\widetilde{X}/\widetilde{A}}=\pi_A^*\cW_{X/A}$. Therefore, $\cW_{\widetilde{X}/\widetilde{A}}$ is globally generated and so is $\widetilde{f}_*\omega_{\widetilde{X}}$. This implies that $K_{\widetilde{X}}$ is globally generated away from the exceptional locus of $\widetilde{f}$.
\end{proof}
\section{Criterion of birationality}
We consider a surjective and generically finite morphism between
smooth projective varieties $t: X\rightarrow Y$. We are interested
to know when $t$ is birational. When there exists a generically
finite morphism $p: Y\rightarrow A$ over an abelian varitey $A$,
there are several cohomological criterion about the birationality
of $t$, see for instance \cite[Theorem 3.1]{hp2}, and \cite[Lemma
5.4]{CDJ}.

In this section, we consider the case when $X\xrightarrow{t}
Y\xrightarrow{g} A$ are generically finite over the abelian
variety $A$. We denote by $f=g \circ t$. We will always assume that
\begin{equation}\label{equation1}V^0(\omega_X,
f)=\cup_{i=1}^N\PA_i,\end{equation} where $\PA_i$ is a proper
abelian subvariety of $\PA$, and in particular, we have
$\chi(X,\omega_X)=0$.

We may write
\begin{eqnarray}\label{def1}t_*\omega_X=\omega_Y\oplus
\cQ.\end{eqnarray} Then $t$ is birational if and only if $\cQ=0$.
Since $f_*\omega_X$ is a GV-sheaf, so is $g_*\cQ$. Hence $\cQ=0$ if
and only if  $V^0(\cQ, g)=\emptyset$.


For any irreducible component $\PA_i\subset V^0(\omega_X, f)$, we consider the Stein factorizations
\begin{eqnarray}\label{diagram1}
\xymatrix{
X\ar[r]^{t}\ar[d]^{h_{X_i}}& Y\ar[d]^{h_{Y_i}}\ar[r]^g & A\ar[d]^{\pi_i}\\
X_i\ar[r]^{t_i} & Y_i\ar[r]^{g_i} & A_i}
\end{eqnarray}
and the set
$$\Sigma_{b}:=\{ 1\leq j\leq N \mid t_j \text{ is  birational} \},$$ and its complementary set
$$\Sigma_{nb}:=\{ 1\leq j\leq N \mid t_j \text{ is not  birational} \}.$$
\begin{prop}\label{criteria}Under the above assumptions, it follows that
\begin{itemize}
\item[1)] for any $T\in S(g_*\cQ)$, $T\subseteq \cup_{j\in \Sigma_{nb}}\PA_j$, and in particular,
$$\bigcup_{T\in S(g_*\cQ)}T \subseteq\bigcup_{j\in \Sigma_{nb}}\PA_j;$$
\item[2)] if all $t_i$ are birational, $t$ is also birational.

\end{itemize}
\end{prop}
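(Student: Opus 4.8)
The plan is to analyze the structure of $g_*\cQ$ via the decomposition theorem. Recall that $\cQ$ is a direct summand of $f_*\omega_X$, so by Corollary~\ref{corollary-dualpure} the dual $\widehat{\R\Delta(\cQ)}$ decomposes as a direct sum of pure sheaves supported on torsion translates $-P_k+\PA_k$, with $P_k+\PA_k\in S(\cQ)$. The two statements are closely linked: part 2) is essentially the contrapositive of part 1), since if $S(g_*\cQ)=\emptyset$ then $\cW$-type reasoning forces $g_*\cQ$ to be M-regular with $V^0(\cQ,g)=\emptyset$, hence $\cQ=0$ and $t$ is birational. So the heart of the matter is part 1): controlling which translated subtori can appear in $S(g_*\cQ)$.

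**Proving part 1).**
First I would fix an irreducible component $T=P+\PA_i\in S(g_*\cQ)$ sitting inside some $V^j(g_*\cQ)$ of codimension $j$. The idea is to localize along $T$ using the Stein factorization in diagram~(\ref{diagram1}). For $Q\in\PA_i$ general and the chosen torsion point, the relevant cohomology $H^j(X,\omega_X\otimes f^*(P\otimes Q))$ is nonzero, and by the Leray/Kollár computation (as in Lemma~\ref{induction}) this cohomology is computed on $X_i$ through the fibration $h_{X_i}$. The key point is to compare the contribution of $\omega_Y$ and of $\cQ$ along $T$: the subtorus $\PA_i$ is the image $\widehat{\pi_i}(\PA_i)$ and the generic vanishing behaviour over $A_i$ detects whether the fibers of $t_i\colon X_i\to Y_i$ are positive-dimensional. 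Concretely, if $j\in\Sigma_b$, so that $t_j$ is birational, then $t_{j*}\omega_{X_j}=\omega_{Y_j}$ and there is no extra summand on the $A_j$-level; the M-regular piece $\cF$ attached to $T$ in the decomposition of $g_*\cQ$ would then have to vanish, contradicting $T\in S(g_*\cQ)$. Hence every $T\in S(g_*\cQ)$ must lie over an index in $\Sigma_{nb}$, which is exactly the asserted inclusion $T\subseteq\bigcup_{j\in\Sigma_{nb}}\PA_j$.

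**Deducing part 2).**
Part 2) is then immediate: if all $t_i$ are birational, then $\Sigma_{nb}=\emptyset$, so by part 1) the set $S(g_*\cQ)$ is empty. By the discussion following the definition of $S(\cF)^*$, emptiness of $S(g_*\cQ)^*$ means $g_*\cQ$ is M-regular; but a nonzero M-regular sheaf has $V^0\neq\emptyset$ of full dimension, which is incompatible with $S^0(g_*\cQ)=\emptyset$ unless $\cQ=0$. More directly, since $g_*\cQ$ is a GV-sheaf, $\cQ=0$ iff $V^0(\cQ,g)=\emptyset$, and $S(g_*\cQ)=\emptyset$ forces precisely this. Therefore $\cQ=0$ and $t$ is birational.

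**The main obstacle.**
The delicate step is the localization argument in part 1): correctly identifying, along a translated component $T=P+\PA_i$, the contribution of $\cQ$ versus that of $\omega_Y$ to the cohomology jumping locus, and showing that birationality of $t_i$ forces the $\cQ$-contribution over $A_i$ to vanish. This requires carefully matching the decomposition of $f_*\omega_X=\omega_Y\oplus\cQ$ (via $g$) with the fibered structure over $A_i$, and using that $t_{i*}\omega_{X_i}=\omega_{Y_i}$ exactly when $t_i$ is birational. The torsion translation by $P_k$ also needs bookkeeping, handled as in Theorem~\ref{decompositionwithtorsion} by passing to an étale cover if necessary; but the essential content is the generic-vanishing comparison along each subtorus.
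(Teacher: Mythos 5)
Your strategy for part 1) --- localizing over $A_j$ and playing the decomposition of $g_*\cQ$ against the equality $t_{j*}\omega_{X_j}=\omega_{Y_j}$ --- is genuinely different from the paper's argument and can in fact be completed, but as written it has a real gap exactly at the step you yourself flag as ``the main obstacle''. The sentence ``there is no extra summand on the $A_j$-level; the M-regular piece $\cF$ attached to $T$ \dots would then have to vanish'' asserts two implications and proves neither. (a) You need that birationality of $t_j$ kills the pushforward of $\cQ$: setting $c=\codim_{\PA}\PA_j$ and using Koll\'ar's theorems (degeneration of Leray, and $R^ch_{X_j*}\omega_X\simeq\omega_{X_j}$, $R^ch_{Y_j*}\omega_Y\simeq\omega_{Y_j}$ for smooth models), one gets $g_{j*}t_{j*}\omega_{X_j}\simeq R^c\pi_{j*}(f_*\omega_X)\simeq g_{j*}\omega_{Y_j}\oplus R^c\pi_{j*}(g_*\cQ)$, so $t_{j*}\omega_{X_j}=\omega_{Y_j}$ forces $R^c\pi_{j*}(g_*\cQ)=0$. (b) You need the converse input: that $T\in S(g_*\cQ)$ with $T\subseteq\PA_j$ forces $R^c\pi_{j*}(g_*\cQ)\neq 0$; this is where the contradiction actually comes from, and it is not automatic. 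Here your normalization $T=P+\PA_i$ is also incorrect: an element of $S(g_*\cQ)$ is a torsion translate $P+\PK$ of a subtorus that is merely \emph{contained} in some $\PA_j$, so the M-regular piece attached to $T$ (via Corollary \ref{corollary-dualpure} and the Fourier--Mukai inversion in the proof of Theorem \ref{decompositionwithtorsion}) lives on the abelian variety $K$ dual to $\PK$, not on $A_j$. The repair: since $\PK\subseteq\PA_j$ and $P\in\PA_j$, that summand of $g_*\cQ$ has the form $\pi_j^*\cM$ with $\cM=P'^{-1}\otimes p_{jK}^*\cG\neq 0$ on $A_j$ (where $P=\pi_j^*P'$, $p_{jK}\colon A_j\to K$ is the induced quotient, and $\cG$ is the nonzero M-regular piece on $K$); then the projection formula and $R^c\pi_{j*}\cO_A\simeq\cO_{A_j}$ give $R^c\pi_{j*}(\pi_j^*\cM)\simeq\cM\neq 0$, contradicting (a). (Alternatively one can argue at a general point of $T$ with the Leray spectral sequence and Hacon's generic vanishing, as in Lemma \ref{induction}.) Once (a) and (b) are supplied, part 1) holds, and your part 2) goes through as written.

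For comparison, the paper avoids the decomposition theorem here entirely: it kills the torsion point $P$ by an \'etale base change, passes to Stein factorizations $Z_X\to Z_Y$ over $K$ (the dual of $\PK$, not over $A_j$), uses Koll\'ar's theorem to write $t_{K*}\omega_{Z_X}=\omega_{Z_Y}\oplus R^k\mu_{Y*}\cQ'$ and Hacon's theorem to get $V^0(R^k\mu_{Y*}\cQ',g_K)=\PK$, whence $h^0(Z_X,\omega_{Z_X})>h^0(Z_Y,\omega_{Z_Y})$, so $\deg t_K>1$, and then propagates the degree up the tower of fibrations to conclude that $t_1$ is not birational. Both routes rest on the same two pillars (Koll\'ar splitting and generic vanishing); yours packages them through the decomposition theorem and a single higher direct image over $A_j$, the paper's through an $h^0$ comparison over $K$ --- but in your write-up the decisive comparison is asserted rather than proved.
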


\begin{proof}
We first prove $1)$.
Assume that $S(g_*\cQ)\neq \emptyset$
and take $T\in S(g_*\cQ)\subset S(f_*\omega_X)$. By Simpson's theorem \cite{sim}, we can write $T=P+\PK$, where $P$ is a torsion point and $\PK$ is a abelian subvariety of $\PA$.
Assume $P+\PK\subset \PA_1$, it suffices to prove that $1\in \Sigma_{nb}$. 

Since $P\in\PA_1$, we may take an \'{e}tale cover
$A_1'\rightarrow A_1$ such that the pull-back of $P$ is trivial.
After base change by $A_1'\rightarrow A_1$, diagram
(\ref{diagram1}) now reads:
\begin{equation*}
\xymatrix{
X'\ar[r]^{t'}\ar[d]^{h'_X} & Y'\ar[d]^{h'_Y}\ar[r]^{g'} & A'\ar[d]^{\pi'}\\
X_1'\ar[r]^{t_1'} & Y_1'\ar[r]^{g_1'} & A_1'}
\end{equation*}
where all $h'_X, h'_Y$ are fibrations and $t_1'$ is birational if
and only if $t_1$ is birational.


We know that $P+\PK$ is  an irreducible component of $V^k(\cQ, g)$.
Then we consider the composition of morphisms $X'\xrightarrow{t'}Y'\xrightarrow{g'}A'\rightarrow A\rightarrow K$ and take smooth models of the Stein factorization of $X'\rightarrow K$ and $Y'\rightarrow K$. We get
\begin{eqnarray*}
\xymatrix{
X'\ar[r]^{t'}\ar[d]^{\mu_X}& Y'\ar[d]^{\mu_Y}\ar[r]^{g'} & A'\ar[d]^{\pi'_K} \\
Z_X\ar[r]^{t_K} & Z_Y\ar[r]^{g_K}  & K}
\end{eqnarray*}
We write $t'_*\omega_{X'}=\omega_{Y'}\oplus \cQ'$, where $\cQ'$ is the pull-back of $\cQ$ on $Y'$ and moreover, since the pull back of $P$ is trivial on $A'$, $\pi_K'^*\PK\hookrightarrow \PA'$ is an irreducible component of
$V^k(\cQ', g')$. We know that $R^i \mu_{Y*}\cQ'$ is a GV-sheaf on $Z_Y$ for each $i\geq 0$ by Hacon's version of generic vanishing theorem (see \cite[Corollary 4.2]{Hac}), we conclude that
$V^0(R^k\mu _{Y*}\cQ', g_K)=\PK$.

We then observe that
$$
\begin{array}{lll} t_{K*}\omega_{Z_X}&= t_{K*}R^k \mu_{X*}\omega_{X'}  &\textrm{by \cite[Proposition 7.6]{ko1}},  \\
                       &= R^k \mu_{Y*}t'_{*}\omega_{X'} &\\
                       &= \omega_{Z_Y}\oplus R^k \mu_{Y*}\cQ'.&
                       \end{array}$$

Therefore, $h^0(Z_X, \omega_{Z_X})>h^0(Z_Y, \omega_{Z_Y})$, we
conclude that $\deg t_K>1$. Notice that we have
\begin{eqnarray*}
\xymatrix{
X'\ar[r]^{t'}\ar[d]& Y'\ar[d]  \\
X_1'\ar[r]^{t_1'}\ar[d] & Y_1'\ar[d]\\
Z_X\ar[r]^{t_K} & Z_Y}
\end{eqnarray*}
where all the vertical morphisms are fibrations. Hence  $\deg
t_K>1$ implies that $\deg t'_1>1$. Therefore, $t'_1$ is not
birational. This implies that $1 \in \Sigma_{nb}$.

For $2)$, we note that it suffices to prove that $V^0(\cQ, g)=\emptyset$. Assume that $V^0(\cQ, g)$ is not empty, then it contains an irreducible component $T$ of codimension $k\leq \dim X$. By \cite[Proposition 3.15]{PP-GV}, $T$ is also an irreducible component of $V^k(\cQ, g)$. In particular, $T\in S(g_*\cQ)$. On the other hand, all $t_i$ are birational by assumption, thus $\Sigma_{nb}=\emptyset$. Therefore, we have by $1)$ that $S(g_*\cQ)=\emptyset$, which is a contradiction.
\end{proof}

It will be useful to consider birationality when $X$ dominates
more than one varieties.  Let's consider the following commutative
diagram:
\begin{eqnarray*}
\xymatrix{
X\ar[rr]^{f_1}\ar[d]_{f_2}\ar[drr]^{q}&& Y_1\ar[d]^{a_1}\\
Y_2\ar[rr]^{a_2} && A}
\end{eqnarray*}
where all the morphisms are generically finite and surjective. We then write
$f_{i*}\omega_X=\omega_{Y_i}\oplus \cQ_i$ for $i=1,2$. 

\begin{prop}\label{criteria3-2finite}
Suppose that
$S(a_{1*}\cQ_1) \cap S(a_{2*}\cQ_2)=\emptyset$, then either $f_1$ or $f_2$ is
birational.
\end{prop}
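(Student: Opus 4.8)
The plan is to prove the contrapositive: assuming that neither $f_1$ nor $f_2$ is birational, I will produce a common element of $S(a_{1*}\cQ_1)$ and $S(a_{2*}\cQ_2)$. Since $f_i$ is birational if and only if $\cQ_i=0$, both $\cQ_1$ and $\cQ_2$ are nonzero; as each $a_{i*}\cQ_i$ is a direct summand of the GV-sheaf $q_*\omega_X$, it is itself a nonzero GV-sheaf, so $V^0(a_{i*}\cQ_i)\neq\vide$ and hence $S(a_{i*}\cQ_i)\neq\vide$. I will also use the elementary monotonicity property that if $\cF$ is a direct summand of $\cG$ (both GV with torsion-translated cohomology loci) then $S(\cF)\subseteq S(\cG)$: a codimension-$i$ component $T$ of $V^i(\cF)$ is contained in a component of $V^i(\cG)$ of codimension $\geq i$, hence equals it. First I would dispose of the degenerate cases where some $a_i$ is birational, say $a_2$: then $Y_2=A$, $f_2=q$, $a_{2*}\cQ_2=\cW_{X/A}$, and $a_{1*}\cQ_1$ is a direct summand of $\cW_{X/A}$, so by monotonicity $S(a_{1*}\cQ_1)\subseteq S(a_{2*}\cQ_2)$ and the (nonempty) left-hand side already lies in the intersection. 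Thus I may assume both $a_1,a_2$ are non-birational, which forces $\cW_{Y_1/A}\neq0$ and $\cW_{Y_2/A}\neq0$.

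The heart of the argument is to exhibit a single nonzero sheaf sitting inside both defects. I would consider the fiber product $W=Y_1\times_A Y_2$, the induced morphism $\phi=(f_1,f_2)\colon X\to W$, the two projections $\pi_i\colon W\to Y_i$, and $a_W\colon W\to A$, so that $f_i=\pi_i\circ\phi$ and $q=a_W\circ\phi$. Because $\omega_A\simeq\cO_A$, relative duality gives $\omega_W\simeq\pi_1^*\omega_{Y_1}\otimes\pi_2^*\omega_{Y_2}$, and base change together with the projection formula over $A$ yields the Künneth-type identity
\begin{equation*}
a_{W*}\omega_W\simeq a_{1*}\omega_{Y_1}\otimes_{\cO_A}a_{2*}\omega_{Y_2}\simeq\cO_A\oplus\cW_{Y_1/A}\oplus\cW_{Y_2/A}\oplus\bigl(\cW_{Y_1/A}\otimes_{\cO_A}\cW_{Y_2/A}\bigr).
\end{equation*}
Writing $\pi_{1*}\omega_W=\omega_{Y_1}\oplus\cQ_{\pi_1}$ and pushing to $A$, the summand $a_{1*}\omega_{Y_1}=\cO_A\oplus\cW_{Y_1/A}$ splits off, so that $\cW_{Y_1/A}\otimes_{\cO_A}\cW_{Y_2/A}$ is a direct summand of $a_{1*}\cQ_{\pi_1}$. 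Finally, since $f_1=\pi_1\circ\phi$ gives $\cQ_1=\cQ_{\pi_1}\oplus\pi_{1*}\cQ_\phi$ (where $\phi_*\omega_X=\omega_W\oplus\cQ_\phi$), the sheaf $a_{1*}\cQ_{\pi_1}$ is in turn a direct summand of $a_{1*}\cQ_1$; the non-birationality of $\phi$ only enlarges the defect and does not affect this. Symmetrically, $\cW_{Y_1/A}\otimes_{\cO_A}\cW_{Y_2/A}$ is a direct summand of $a_{2*}\cQ_2$.

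To conclude, I note that $\cW_{Y_1/A}\otimes_{\cO_A}\cW_{Y_2/A}$ is nonzero (both factors are nonzero torsion-free sheaves) and, being a direct summand of the GV-sheaf $a_{W*}\omega_W$, is again a GV-sheaf; hence $S(\cW_{Y_1/A}\otimes_{\cO_A}\cW_{Y_2/A})\neq\vide$. By the monotonicity property applied to the two summand relations above, every element of this nonempty set lies in both $S(a_{1*}\cQ_1)$ and $S(a_{2*}\cQ_2)$, contradicting the hypothesis that these sets are disjoint. The main obstacle I anticipate is the geometric control of the fiber product $W$: the identity for $a_{W*}\omega_W$ is transparent only when $Y_1\times_A Y_2$ behaves like a genuine relative product, whereas in general it may be reducible or singular, and $X$ may map onto only one of its components. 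The delicate point is therefore to justify the Künneth-type decomposition—equivalently, base change for the relative dualizing sheaves along the generically finite maps $a_i$—and to check that, after passing to the component dominated by $X$ and absorbing the extra contribution $\pi_{i*}\cQ_\phi$ coming from a non-birational $\phi$, the mixed term $\cW_{Y_1/A}\otimes_{\cO_A}\cW_{Y_2/A}$ still survives as a common direct summand of the two defects.
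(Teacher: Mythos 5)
Your preliminary reductions are fine: the monotonicity $S(\cF)\subseteq S(\cG)$ for a direct summand $\cF$ of a GV-sheaf $\cG$, the nonemptiness of $S$ for a nonzero GV-sheaf (which needs \cite[Proposition 3.15]{PP-GV}, as in the proof of Proposition \ref{criteria}), and the disposal of the case where some $a_i$ is birational. But the heart of your argument --- that $\cW_{Y_1/A}\otimes_{\cO_A}\cW_{Y_2/A}$ is a common direct summand of $a_{1*}\cQ_1$ and $a_{2*}\cQ_2$ --- has a genuine gap, which you flag but never close, and which is not merely technical. There are two distinct failures. First, the K\"unneth identity $a_{W*}\omega_W\simeq a_{1*}\omega_{Y_1}\otimes_{\cO_A} a_{2*}\omega_{Y_2}$ requires base change and duality along the $a_i$; this works when the $a_i$ are finite (hence flat, by miracle flatness), but the $a_i$ here are only generically finite, may contract subvarieties, and are then neither affine nor flat. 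Second, and fatally even in the finite flat case: $W=Y_1\times_A Y_2$ is in general reducible (and non-normal), $\phi=(f_1,f_2)$ dominates only one irreducible component $W_0$, so the splitting $\phi_*\omega_X=\omega_W\oplus\cQ_\phi$ that you use to get $\cQ_1=\cQ_{\pi_1}\oplus\pi_{1*}\cQ_\phi$ is not even meaningful. The mixed term $\cW_{Y_1/A}\otimes\cW_{Y_2/A}$ in the K\"unneth decomposition collects contributions from \emph{all} components of $W$, not just from $W_0$, and so need not be related to $X$ at all. A minimal illustration: let $Y_1=Y_2=Y$ be a connected \'etale double cover of $A$ given by a $2$-torsion point $P$, and $f_1=f_2$. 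Then $a_{i*}\omega_Y=\cO_A\oplus P$ and the mixed term is $P\otimes P\simeq\cO_A$; but $W$ is the disjoint union of the diagonal and the graph of the covering involution, $\phi$ lands in the diagonal, and the summand $\cO_A$ of $a_{W*}\omega_W$ lives entirely on the other component. So your key sheaf is simply not a summand of the defect sheaves, and passing to $W_0$ or a desingularization of it destroys the K\"unneth decomposition your argument needs.

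For comparison, the paper's proof avoids fiber products altogether. Assuming $f_1$ is not birational, it uses Corollary \ref{corollary-dualpure} to decompose $\widehat{\R\Delta(a_{i*}\cQ_i)}$ into pure sheaves supported on the elements of $S(a_{i*}\cQ_i)$; the disjointness hypothesis then forces, via Mukai's equivalence, the vanishing of the component of any map $a_{1*}\cQ_1\to a_{2*}\cQ_2$. Factoring the identity of $a_{1*}\cQ_1$ through $q_*\omega_X=a_{2*}\omega_{Y_2}\oplus a_{2*}\cQ_2$ therefore exhibits $a_{1*}\cQ_1$ as a direct summand of $a_{2*}\omega_{Y_2}$, and a comparison of $h^n$ and of ranks ($\deg a_2>\deg a_1(\deg f_1-1)$, combined with $\deg a_1\deg f_1=\deg a_2\deg f_2$) forces $\deg f_2=1$. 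If you want to keep the spirit of your construction, note that Corollary \ref{corollary-1} in the paper does use a fiber product, but only $X_1\times_{A_1}A$ with one factor an abelian variety, precisely so that the product stays under control; the general product $Y_1\times_A Y_2$ does not.
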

\begin{proof}
Let's denote by $r_i$ the degree of $f_i$ and denote by $s_i$ the degree of $a_i$. Certainly, $r_1s_1=r_2s_2=\deg q$.

We may assume that $f_1$ is not birational. Then $\cQ_1$ is a sheaf of rank $r_1-1>0$.
We note that $a_{i*}\cQ_1$ is a direct summand of $$q_*\omega_X=a_{i*}\omega_{Y_2}\oplus a_{i*}\cQ_2,$$
for $i=1, 2$. Thus $\widehat{\R\Delta (a_{i*}\cQ_i)}$ is a direct summand of $\widehat{\R\Delta(q_*\omega_X)}$.
By Corollary \ref{corollary-dualpure}, we have the decomposition for $\widehat{\R\Delta (a_{i*}\cQ_i)}$:
\begin{eqnarray}\label{decomsummand}
\widehat{\R\Delta (a_{i*}\cQ_i)}\simeq \oplus_{P_k+\PA_k\in S(a_{i*}\cQ_i)} \cF_k,
\end{eqnarray}
where $\cF_k$ is a pure sheaf supported on $-P_k+\PA_k$.

We now consider the morphisms:
$$\Id: a_{1*}\cQ_1\xrightarrow{ \Psi=(\Psi_1, \Psi_2)} a_{2*}\omega_{Y_2}\oplus a_{2*}\cQ_2 \rightarrow a_{1*}\cQ_1$$
We notice that
\begin{eqnarray*}&&\Hom_{\D(A)}(a_{1*}\cQ_1, a_{2*}\omega_{Y_2}\oplus a_{2*}\cQ_2)\\&\simeq &
\Hom_{\D(A)}\big(\R\Delta(a_{2*}\omega_{Y_2})\oplus \R\Delta(a_{2*}\cQ_2), \R\Delta(a_{1*}\cQ_1)\big)\\
&\simeq&\Hom_{\D(\PA)}\big(\widehat{\R\Delta(a_{2*}\omega_{Y_2})}\oplus \widehat{\R\Delta(a_{2*}\cQ_2}), \widehat{\R\Delta(a_{1*}\cQ_1)}\big).
\end{eqnarray*}

We denote by $\Phi_i$ the image of $\Psi_i$ under the above natural transformation.
By assumption $S(a_{1*}\cQ_1) \cap S(a_{2*}\cQ_2)=\emptyset$, thus by (\ref{decomsummand}), we have $\Phi_2=0$ and hence
$\Psi_2=0$.
Therefore, $ a_{1*}\cQ_1$ is a direct summand $ a_{2*}\omega_{Y_2} $.

We note that $h^n(A, a_{2*}\omega_{Y_2})=1$ but $h^n(A, a_{1*}\cQ_1)=h^n(A, q_{*}\omega_{X})-h^n(A, a_{1*}\omega_{Y_1})=0$. Hence $\rank a_{2*}\omega_{Y_2}>\rank a_{1*}\cQ_1.$

We then compare the rank of these two sheaves: $s_2>s_1(r_1-1)$. Hence
$r_1s_1=r_2s_2 >r_2s_1(r_1-1)$ and $r_2<\frac{r_1}{r_1-1}$. Therefore $r_2=1$ and $f_2$ is birational.
\end{proof}
Here is an application of Proposition \ref{criteria3-2finite} and will be used in the last section.
\begin{coro}\label{corollary-1}Under the setting of Proposition \ref{criteria},
suppose that all $t_i$ but possible one are birational, then $t$ is birational.
\end{coro}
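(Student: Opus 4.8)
The plan is to argue by contradiction, showing that a single non-birational factor cannot occur, so that Proposition \ref{criteria}(2) finishes the proof. First I would replace $X$ by a finite \'etale cover---harmless, since this alters neither the birationality of $t$ nor that of the $t_i$---so that every component of $S(f_*\omega_X)$ passes through the origin of $\PA$. If all the $t_i$ are birational, Proposition \ref{criteria}(2) gives that $t$ is birational and we are done. So suppose exactly one factor, say $t_1$, is not birational. Since $t_1$ is obtained from $t$ by Stein factorization over $A_1$, birationality of $t$ would force that of $t_1$; hence $t$ is itself not birational and $\cQ\neq 0$.

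Next I would build a second factorization of $f$ adapted to the distinguished component $\PA_1$. Keeping the notation of diagram (\ref{diagram1}) for $i=1$, write $\pi_1\colon A\to A_1$ and $u_1:=g_1\circ t_1\colon X_1\to A_1$, and set $Y_2:=X_1\times_{A_1}A$ with projection $a_2\colon Y_2\to A$. Because $\pi_1$ is smooth, $Y_2$ is smooth and $\omega_{Y_2/X_1}$ is trivial, so flat base change yields $a_{2*}\omega_{Y_2}=\pi_1^*(u_{1*}\omega_{X_1})$. The induced map $f_2:=(h_{X_1},f)\colon X\to Y_2$ is generically finite and surjective with $f=a_2\circ f_2$; writing $f_{2*}\omega_X=\omega_{Y_2}\oplus\cQ_2$ and comparing with $f_*\omega_X=\cO_A\oplus\cW_{X/A}$, I obtain the decomposition $\cW_{X/A}=\pi_1^*\cW_{X_1/A_1}\oplus a_{2*}\cQ_2$.

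The crux, and the step I expect to be the main obstacle, is to verify that $S(g_*\cQ)\cap S(a_{2*}\cQ_2)=\emptyset$. Since $t_1$ is the only non-birational factor we have $\Sigma_{nb}=\{1\}$, so Proposition \ref{criteria}(1) places every element of $S(g_*\cQ)$ inside $\PA_1$. For the other sheaf I would combine Theorem \ref{decomposition} with Lemma \ref{induction}: for each $\PA_k\subseteq\PA_1$ the morphism $X\to A_k$ factors through $X_1$ and, since $X\to X_1$ has connected fibers, the Stein factorization of $X\to A_k$ coincides with that of $X_1\to A_k$, so the M-regular summand attached to $\PA_k$ is the same whether computed on $X$ or on $X_1$. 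Consequently the summands of $\cW_{X/A}$ supported on subtori of $\PA_1$ are exactly those constituting $\pi_1^*\cW_{X_1/A_1}$, i.e. the nontrivial part of $a_{2*}\omega_{Y_2}$. Hence no summand of $a_{2*}\cQ_2$ is supported on a subtorus of $\PA_1$, and since the members of $S(g_*\cQ)$ all lie in $\PA_1$ while those of $S(a_{2*}\cQ_2)$ do not, the two collections are disjoint.

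Finally I would invoke Proposition \ref{criteria3-2finite}: the disjointness just established forces $t$ or $f_2$ to be birational, and as $t$ is not birational, $f_2$ must be. But $Y_2$ fibers over $X_1$ with abelian-variety fibers, so by the easy addition inequality its Kodaira dimension is at most $\dim X_1=\dim X-\codim_{\PA}\PA_1<\dim X$; were $f_2$ birational we would obtain $\kappa(X)<\dim X$, contradicting that $X$ is of general type. Thus the case of exactly one non-birational factor cannot occur: all the $t_i$ are birational, and $t$ is birational by Proposition \ref{criteria}(2).
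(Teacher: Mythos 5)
Your proposal is correct and is essentially the paper's own proof: the paper likewise forms $Z=X_1\times_{A_1}A$ (your $Y_2$), uses Proposition \ref{criteria}(1) to confine $S(g_*\cQ)$ to $\PA_1$, applies Proposition \ref{criteria3-2finite} to conclude that $t$ or $s\colon X\to Z$ is birational, and rules out the latter because $Z$ is not of general type. The only divergence is at the disjointness step: the paper reads off that no element of $S(a_{2*}\cQ_Z)$ lies in $\PA_1$ directly from the construction of $Z$ (the Stein factorization of $Z\to A_1$ is $X_1$ itself, so Proposition \ref{criteria}(1) applied to $s$ excludes such components), whereas you derive it from Theorem \ref{decomposition} and Lemma \ref{induction} together with an implicit Krull--Schmidt cancellation of the common summand $\pi_1^*\cW_{X_1/A_1}$ --- valid, but heavier machinery than the paper needs.
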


\begin{proof} Assume that $t$ is not birational, then $\Sigma_{nb} \neq \emptyset$. We may assume that $\Sigma_{nb}=\{1\}$.

We define $Z:=X_1\times_{A_1}A$. Then the induced morphism $s: X\rightarrow Z$ is generically finite and surjective.

We now consider the commutative diagram:
\begin{eqnarray*}
\xymatrix{
X\ar[rr]^{t}\ar[d]_{s}\ar[drr]^{q}&& Y\ar[d]^{a_1}\\
Z\ar[rr]^{a_2} && A.}
\end{eqnarray*}
Write $t_*\omega_X=\omega_Y\oplus \cQ_Y$ and $s_*\omega_X=\omega_Z\oplus\cQ_Z$.
We note by Proposition \ref{criteria} that all elements of $S(a_{1*}\cQ_Y) $ are contained in $\PA_1$.
On the other hand, by the construction of $Z$, any element of $S(a_{2*}\cQ_Z) $ is not contained in $\PA_1$. Hence $S(a_{1*}\cQ_Y)\cap S(a_{2*}\cQ_Z)=\emptyset$.
We now apply Proposition \ref{criteria3-2finite} to conclude that either $t$ or $s$ is birational.
However,  $s$ cannot be birational because $Z$ is not of general type. This is the desired contradiction.
\end{proof}
\section{Primitive varieties of $\chi=0$}
We are now ready to  apply the results proved in previous sections to study the structure of a smooth projective variety $X$ of maximal Albanese dimension, of general type, and $\chi(X, \omega_X)=0$. In this section, we will formulate a conjecture about the structure of such varieties.

Inspired by the main theorem in \cite{CDJ}, we find the following definition useful.

\begin{defi}
Let $X$ be a smooth projective variety of maximal Albanese dimension, of general type, and $\chi(X, \omega_X)=0$. We say that $X$ is {\it primitive of $\chi=0$} if
\begin{itemize}
 \item[ ]for any non-trivial fibration $h: X\rightarrow Y$ to a normal projective variety with a general fiber $F$, we have $\chi(F, \omega_F)>0$;
\end{itemize}
\end{defi}

For a primitive variety of $\chi=0$, we know that $q(X)=\dim X=n$, the Albanese morphism $a_X: X\rightarrow A_X$ is generically finite
 and surjective, and $A_X$ has at least $3$ simple factors  (see \cite[Lemma 4.6 and Corollary 3.5]{CDJ}).
Moreover, we see from the definition that primitive varieties of $\chi=0$ are the building blocks of varieties of maximal Albanese dimension, of general type, and of $\chi(X, \omega_X)=0$. But more precisely, we have the following structural result.

\begin{prop}\label{reduction}Let $X$ be a variety of general type, of maximal Albanese dimension, and $\chi(X, \omega_X)=0$. Then either $X$ is primitive of $\chi=0$ or there exists an irregular fibration\footnote{Here, by definition, an irregular fibration $f: X\rightarrow Z$ is the Stein factorization of a morphism $g: X\rightarrow A$ to an abelian variety. Hence $Z$ is a normal projective variety and there exists a finite morphism $Z\rightarrow A$.} $f: X\rightarrow Z$  with a general fiber $F$  primitive of $\chi=0$.
\end{prop}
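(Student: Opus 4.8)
The plan is to combine induction on $\dim X$ with a minimality argument carried out over irregular fibrations. If $X$ is primitive of $\chi=0$ there is nothing to prove, so suppose it is not. By definition there is then a nontrivial fibration $h\colon X\to Y$ onto a normal projective variety whose general fiber $F_0$ fails to satisfy $\chi(F_0,\omega_{F_0})>0$; since $F_0$, being a general fiber of a maximal Albanese dimension variety, is again of maximal Albanese dimension, it satisfies $\chi\ge 0$, so in fact $\chi(F_0,\omega_{F_0})=0$. My first step is to replace $h$ by an \emph{irregular} fibration with the same defect. Let $K\subseteq A_X$ be the abelian subvariety generated by the differences $a_X(x)-a_X(y)$ for $x,y\in F_0$, put $A:=A_X/K$, and let $f\colon X\to Z\to A$ be the Stein factorization of $X\xrightarrow{a_X}A_X\to A$; this is a nontrivial irregular fibration in the sense of the footnote, and $F_0$ is contracted by $X\to A$. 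The point requiring verification here is that the general fiber of $f$ again has vanishing holomorphic Euler characteristic.

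Since $\dim Z\le\dim X$ and, by the first step, the collection is nonempty, among all nontrivial irregular fibrations $f\colon X\to Z$ whose general fiber $F$ satisfies $\chi(F,\omega_F)=0$ I may choose one with $\dim Z$ maximal, equivalently with $\dim F$ minimal. I claim this $F$ is primitive of $\chi=0$. It is of maximal Albanese dimension because $X$ is and $F\subset X$ is a general fiber. It is also of general type: as $Z$ carries a finite morphism to the abelian variety $A$, the subadditivity of the Kodaira dimension applies to $f$ and yields $\dim X=\kappa(X)\le\kappa(F)+\dim Z$, so that $\kappa(F)\ge\dim F$. Together with $\chi(F,\omega_F)=0$ this makes $F$ a legitimate candidate for primitivity, and it remains only to rule out bad subfibrations of $F$.

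Suppose, for contradiction, that $F$ admits a nontrivial fibration whose general fiber has vanishing Euler characteristic, i.e. that $F$ is not primitive. Since $\dim F<\dim X$, the induction hypothesis applied to $F$ produces a nontrivial \emph{irregular} fibration of $F$ arising from a quotient $A_F\to B$, the general fiber of which has $\chi=0$. Let $K_F:=\ker(A_F\to B)$ and let $\overline{K_F}\subseteq K$ be the image of $K_F$ under the natural homomorphism $A_F\to A_X$ induced by $F\hookrightarrow X$ (whose image lies in $K$). Setting $C:=A_X/\overline{K_F}$ and taking the Stein factorization $X\to Z'\to C$, I would obtain an irregular fibration of $X$ refining $f$, and I would then aim to show that its general fiber is strictly smaller than $F$ while still having $\chi=0$, contradicting the maximality of $\dim Z$ and hence establishing the primitivity of $F$.

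The main obstacle is precisely this last descent. The homomorphism $A_F\to A_X$ may have a nontrivial kernel, so the subtorus $K_F$ defining the fibration of $F$ need not be faithfully reflected on $A_X$; one must analyze $\overline{K_F}\subseteq K$ carefully to guarantee, first, that $\dim Z'>\dim Z$ (so that the refinement is strict and the new general fiber really is smaller than $F$), and second, that passing from $F$ to the general fiber of $X\to Z'$ preserves the vanishing of the Euler characteristic. Controlling this interplay between the Albanese variety of the fiber and that of $X$ is where the real work lies; the $\chi=0$ verification in the first step — that contracting $F_0$ by $X\to A_X/K$ produces a fiber which again has vanishing Euler characteristic — is a milder instance of the same phenomenon.
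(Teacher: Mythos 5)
Your overall skeleton --- convert a bad fibration into an irregular one, minimize, then show the minimal fiber is primitive --- is the same as the paper's, but the two places you yourself flag as unverified (that the general fiber of your new irregular fibration still has $\chi=0$, and the descent step in the primitivity argument) are precisely where the entire mathematical content of this proposition lives, and your proposal supplies no argument for either. The paper closes both gaps with one external result, Chen--Hacon \cite[Theorem 4.2]{ch2}, packaged as Lemma \ref{reduction1}: for \emph{any} generically finite morphism $\alpha\colon X\to A$ from a variety of general type to an abelian variety and any fibration with general fiber $F$ satisfying $\chi(\omega_F)=0$, there is a quotient $A\to A/K$ such that the given fibration factors birationally through the Stein factorization of $X\to A/K$, whose general fiber again has $\chi=0$. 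Note the direction: this produces an irregular fibration that \emph{refines} the given one (its fibers sit inside $F$), and its $\chi=0$ statement is exactly what you say ``requires verification.'' Without this theorem, or a proof of it, your plan cannot be completed by routine arguments; citing it is not optional bookkeeping but the crux.

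Moreover, your specific setup makes the problem strictly harder than the paper's. In your Step 1 you contract the subtorus generated by $a_X(F_0)$, so the general fiber $F$ of your irregular fibration \emph{contains} $F_0$; you then need the coarsening implication ``$F$ is fibered by $\chi=0$ varieties $\Rightarrow\chi(\omega_F)=0$,'' which does not follow from \cite[Theorem 4.2]{ch2} and is not known a priori: the only line bundles you can bring to bear on $F_0$ are pulled back from $\widehat{K}$, and the image of $\widehat{K}\to\Pic^0(F_0)$ could a priori lie inside $V^0(\omega_{F_0})$, so generic twists need not kill sections. The paper sidesteps this entirely by taking $\dim F$ minimal among \emph{all} fibrations with $\chi=0$ general fiber, not only irregular ones: Lemma \ref{reduction1} then yields a refining irregular fibration with fiber dimension $\le\dim F$, which by minimality must coincide with the chosen one, so the minimal fibration is automatically irregular with $a_X(F)$ a translate of some $K$. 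For the contradiction, the paper applies the same lemma to the generically finite morphism $a_X|_F\colon F\to K$ --- not to the Albanese morphism of $F$, which is exactly where your kernel problem for $A_F\to A_X$ originates --- obtaining $K'\subsetneq K$ and a component $F'$ of a general fiber of $F\to K/K'$ with $\chi(\omega_{F'})=0$ and $\dim F'<\dim F$; since $F'$ is a general fiber of the Stein factorization of $X\to A_X/K'$, minimality is violated. Your descent via $A_F\to B$ reintroduces the coarsening problem (the fibers of $F\to K/\overline{K_F}$ contain, possibly strictly, the $\chi=0$ fibers of $F\to B$, since $\ker(A_F\to K)$ may dominate a positive-dimensional subtorus of $B$), and you also cannot rule out $\overline{K_F}=K$. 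So the gap is genuine: what is missing is \cite[Theorem 4.2]{ch2} together with the decision to run the minimality over all fibrations, so that only the refinement direction of that theorem is ever needed. (A small side remark: the inequality $\kappa(X)\le\kappa(F)+\dim Z$ you use is the unconditional ``easy addition'' inequality, not subadditivity; your conclusion that $F$ is of general type is correct.)
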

\begin{proof}

 If $X$ is not primitive of $\chi=0$, we take $f: X\rightarrow Y$ to be a fibration with a general fiber $F$ such that $\chi(\omega_F)=0$ and assume that $\dim F$ is minimal among all such fibrations.

 By Lemma \ref{reduction1} below and the minimality of $\dim F$, we see that $f$ is an irregular fibration and $a_{X}(F)$ is a translate of an abelian variety $K$ of $A_{X}$. We claim that $F$ is primitive of $\chi=0$.

 Otherwise, there exists a fibration of $F$, whose general fiber has $\chi=0$.  Considering the generically finite morphism $a_X|_F: F\rightarrow K$, we conclude again by Lemma \ref{reduction1} that there exists an abelian subvariety $K'$ of $K$ such that an irreducible component $F'$ of a general fiber of $F\rightarrow K\rightarrow K/K'$ has $\chi(\omega_{F'})=0$. Then considering the Stein factorization $X\xrightarrow{g}Z\rightarrow A_X/K'$, $F'$ is a general fiber of $g$, which is again a contradiction to the minimality of the dimension of $F$.
  \end{proof}
\begin{lemm}\label{reduction1}
Let $\alpha: X\rightarrow A$ be a generically finite morphism from a smooth projective variety of general type to an abelian variety.
Assume that we have a fibration $f: X\rightarrow Y$ with a general fiber  $F$ such that $\chi(\omega_F)=0$. Then there exists a quotient of abelian varieties $A\rightarrow B$ such that $f$ factors birationally through the Stein facorization $g: X\rightarrow Z$ of the induced morphism $X\rightarrow A\rightarrow B$. Moreover, we have $\chi(\omega_{F'})=0$ for a general fiber $F'$ of $g$.
 \end{lemm}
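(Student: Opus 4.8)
The plan is to manufacture the quotient $A\to B$ from the subtorus of $A$ that stabilizes the Albanese image of the general fiber $F$, and then to extract both the factorization and the vanishing of $\chi$ from Kollár's splitting together with the Ein--Lazarsfeld structure theorem. Write $S:=\alpha(F)\subseteq A$; since $\alpha$ is generically finite, $\alpha|_F$ is generically finite onto $S$, so $F$ has maximal Albanese dimension and $\dim S=\dim F$. Because $\chi(\omega_F)=0$, I would first apply \cite[Theorem 3]{el} to the Albanese map of $F$ to produce a nonzero subtorus $T_0\subseteq\Alb(F)$ with $a_F(F)+T_0=a_F(F)$. Writing $u\colon\Alb(F)\to A$ for the homomorphism with $\alpha|_F=u\circ a_F$, the image $T:=u(T_0)$ satisfies $S+T=S$, and $T\ne 0$: otherwise $u$ would contract the positive-dimensional $T_0$-cosets filling $a_F(F)$, contradicting that $u|_{a_F(F)}$ is generically finite (as $\dim a_F(F)=\dim S$). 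I then replace $T$ by the maximal subtorus of $A$ with $S+T=S$, still nonzero; since $A$ has only countably many subtori and the images $\alpha(F_y)$ move in an algebraic family, this maximal stabilizer is the same $T$ for the general fiber.

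Next I would set $B:=A/T$, let $p\colon A\to B$ be the quotient, and let $g\colon X\to Z\to B$ be the Stein factorization of $p\circ\alpha$, so that $g$ is a fibration and $Z\to B$ is finite; this is the candidate irregular fibration. A general fiber of $g$ is a connected component $F'$ of $\alpha^{-1}(C)$ for a coset $C$ of $T$, and since $\alpha$ is generically finite onto its image, $\alpha^{-1}(C)$ is pure of dimension $\dim T$ near a general point. To see that $f$ factors through $g$, I would fix a general $x\in X$, put $y=f(x)$ and $C=\alpha(x)+T$. Since $T$ stabilizes $S_y=\alpha(F_y)$ and $\alpha(x)\in S_y$, we get $C\subseteq S_y$, so $(\alpha|_{F_y})^{-1}(C)\subseteq F_y$ is a $\dim T$-dimensional subvariety of $\alpha^{-1}(C)$ through $x$; by the purity just noted it contains the whole component $F'$ through $x$. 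Hence $F'\subseteq F_y$ and $f$ is constant on $F'$, which produces a rational map $Z\dashrightarrow Y$ with $f=(\,\cdot\,)\circ g$ up to birational modification.

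It then remains to show $\chi(\omega_{F'})=0$. Inside $F=F_y$ the inclusion $F'\hookrightarrow F$ realizes $F'$ as a general fiber of the fibration $\phi\colon F\to Z_F$ given by the Stein factorization of $F\xrightarrow{\alpha}S\to S/T=:\bar S$, and $Z_F$ is generically finite onto $\bar S\subseteq B$, hence of maximal Albanese dimension. By maximality of $T$ the subvariety $\bar S$ has trivial stabilizer in $B$, so $\bar S$ --- and therefore, pushing any ruling subtorus of $\Alb(Z_F)$ down to $\bar S$, the Albanese image of $Z_F$ --- is ruled by no nonzero subtorus; by the contrapositive of \cite[Theorem 3]{el} together with $\chi\ge 0$ from \cite{gl1}, this forces $\chi(\omega_{Z_F})>0$. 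Applying Kollár's splitting of $R\phi_*\omega_F$ \cite{ko1,ko2} and the generic vanishing theorem \cite{Hac} to $\phi$ then yields the inequality $\chi(\omega_F)\ge \chi(\omega_{Z_F})\cdot\chi(\omega_{F'})$. Since $\chi(\omega_F)=0$, $\chi(\omega_{Z_F})>0$, and $\chi(\omega_{F'})\ge 0$, I conclude $\chi(\omega_{F'})=0$ (the degenerate case $\bar S=\mathrm{pt}$, where $F'=F$, being immediate).

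The two steps that will require genuine care are the last two. The hard part will be the inequality $\chi(\omega_F)\ge\chi(\omega_{Z_F})\chi(\omega_{F'})$, which is the analytic heart: I would prove it by writing a general $P\in\Pic^0(F)$ as $\phi^*P_1\otimes P_2$, using Leray together with generic vanishing to reduce $h^0(\omega_F\otimes P)$ to $h^0(Z_F,\phi_*(\omega_F\otimes P_2)\otimes P_1)$, identifying the generic rank of $\phi_*(\omega_F\otimes P_2)$ with $\chi(\omega_{F'})$, and playing this rank against the positivity encoded in $\chi(\omega_{Z_F})>0$. The second delicate point is the genericity bookkeeping in the factorization step --- staying away from the exceptional locus of $\alpha$ and the non-flat locus of $f$ --- needed to guarantee the purity of $\alpha^{-1}(C)$ and that the component through $x$ genuinely lies in $F_y$; this is routine but must be carried out for a sufficiently general, hence movable, fiber.
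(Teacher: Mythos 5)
Your first two paragraphs are essentially sound: producing a nonzero subtorus of $A$ stabilizing $S=\alpha(F)$ from \cite[Theorem 3]{el}, passing to the maximal stabilizer, and checking that $f$ factors through the Stein factorization $g$ of $X\to A/T$ is exactly the kind of routine verification the paper compresses into ``it is easy to check.'' But the paper's proof is a two-line reduction to Chen--Hacon \cite[Theorem 4.2]{ch2}, which produces in one stroke both a ruling subtorus $K$ of $\alpha(F)$ \emph{and} the vanishing $\chi(\omega_{F'})=0$ for a component $F'$ of the general fiber of $F\to A/K$. All the genuine content of the lemma is delegated to that citation, and it is precisely the part you try to reprove that fails.

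The gap is your inequality $\chi(\omega_F)\ge\chi(\omega_{Z_F})\cdot\chi(\omega_{F'})$, which you assert follows from Koll\'ar's splitting and generic vanishing applied to $\phi$. As a statement about fibrations, this is false, and the tools you invoke apply verbatim to a counterexample: take $X$ the Ein--Lazarsfeld threefold, a desingularization of $(F_1\times F_2\times F_3)/\langle\tau_1\times\tau_2\times\tau_3\rangle$ with $F_i\to K_i$ genus-$2$ double covers of elliptic curves. It fibres over the surface $Y_3$ (desingularizing $(F_1\times F_2)/\langle\tau_1\times\tau_2\rangle$) with general fiber the curve $F_3$, and one computes $\chi(\omega_X)=0$ while $\chi(\omega_{Y_3})=\chi(\omega_{F_3})=1$, so the inequality would read $0\ge 1$. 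Koll\'ar's theorem only yields the alternating sum $\chi(\omega_F)=\sum_i(-1)^i\chi\bigl(Z_F,R^i\phi_*\omega_F\bigr)$, whose terms are individually nonnegative by generic vanishing but cancel (in the example, $1-1=0$); no product bound comes out of this. Hence any correct version of your inequality must use the nondegeneracy of $\bar S$ in an essential way, and your sketch of how to do so reduces to the assertion that a GV-sheaf of positive generic rank whose support is not fibred by subtori has positive $\chi$; note moreover that a generic $P_2\in\Pic^0(F)$ restricts to $F'$ only to a generic point of a proper subtorus of $\Pic^0(F')$, so the generic rank of $\phi_*(\omega_F\otimes P_2)$ is merely bounded below by $\chi(\omega_{F'})$, and Simpson's theorem on torsion translates does not apply as stated to such non-torsion twists. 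That assertion is essentially \cite[Theorem 4.2]{ch2} itself (equivalently, the hard step in the proof of \cite[Theorem 3]{el}), so invoking it would be circular, and proving it is the real work. A secondary issue: even granting \cite[Theorem 4.2]{ch2}, it gives the vanishing for \emph{some} ruling subtorus $K$, whereas you need it for the \emph{maximal} stabilizer $T\supseteq K$; the fiber over $A/T$ is fibred by the $\chi=0$ fibers over $A/K$, and $\chi=0$ does not obviously propagate to total spaces of such fibrations, so your specific choice of $B=A/T$ requires yet another argument.
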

\begin{proof}
We consider the morphism $\alpha|_F: F\rightarrow A$. By \cite[Theorem 4.2]{ch2}, there exists an abelian subvariety $K$ of $A$ such that $\alpha|_F(F)$ is fibred by $K$ and moreover, an irreducible component $F'$ of a general fiber $F\rightarrow A\rightarrow A/K$ has $\chi(\omega_{F'})=0$. We then take $B$ to be $A/K$ and let $X\xrightarrow{g}Z\rightarrow B$ be the Stein factorization of $X\rightarrow B$. It is easy to check that $g$ is the irregular fibration we are looking for.
\end{proof}

Here we have some basic properties for primitive varieties of $\chi=0$.
\begin{lemm}\label{lemma1}Let $X$ be a primitive variety of $\chi=0$, then
\begin{itemize}
\item[1)]for any simple abelian sub-variety $\PK\hookrightarrow \PA_X$, there exists an irreducible component $\PA_i$ of $V^0(\omega_X,a_X)$ such that
the composition of morphisms $\PA_i\rightarrow \PA_X\rightarrow \PA_X/\PK$ is an isogeny;
\item[2)] for any simple abelian variety $\PK\hookrightarrow \PA_X$, the induced morphism $X\rightarrow A_X\rightarrow K$
is a fibration.
\end{itemize}

\end{lemm}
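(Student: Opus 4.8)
The plan is to deduce both parts from a single surjectivity statement: the projection $\PA_X\to\PB:=\PA_X/\PK$ carries $V^0(\omega_X,a_X)$ onto all of $\PB$. First I fix notation. Let $p_K\colon A_X\to K$ be the quotient dual to the inclusion $\iota\colon\PK\hookrightarrow\PA_X$, and $B:=\ker(p_K)$, an abelian subvariety of $A_X$ with $\widehat B=\PB$ and $\dim B=n-\dim\PK$. Since $\PA_X$ has at least three simple factors and $\PK$ is simple, $0<\dim\PK<n$. As $X$ is primitive, $a_X$ is surjective and generically finite, so $\psi:=p_K\circ a_X\colon X\to K$ is surjective; let $X\xrightarrow{g}Z\xrightarrow{\pi}K$ be its Stein factorization, with $g$ a fibration. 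By Theorem \ref{decompositionwithtorsion} I may write $V^0(\omega_X,a_X)=\{0\}\cup\bigcup_k(P_k+\PA_k)$, a finite union of torsion translates of abelian subvarieties, all of them proper because $\chi(\omega_X)=0$.

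For the core step, let $F$ be a general fibre of $g$. Then $\psi(F)$ is a point, so $a_X(F)$ lies in a translate of $B$; since $a_X$ is generically finite, so is $a_X|_F$, and $\dim a_X(F)=\dim F=\dim B$, whence $a_X|_F\colon F\to B$ is, up to translation, generically finite and surjective. As $g$ is a non-trivial fibration and $X$ is primitive, $\chi(\omega_F)>0$, and generic vanishing then gives $V^0(\omega_F,a_X|_F)=\PB$. Now fix $\bar\beta\in\PB$ and a lift $\beta\in\PA_X$. Using $\omega_X|_F\simeq\omega_F$ and $P_\beta|_F\simeq(a_X|_F)^*\bar\beta$, one has $H^0(F,\omega_X\otimes P_\beta|_F)\neq0$, so the sheaf $\psi_*(\omega_X\otimes P_\beta)$ on $K$ has positive generic rank, hence is nonzero; by Hacon's generic vanishing theorem it is a GV-sheaf. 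By Remark \ref{properties}.1 its transform $\widehat{\R\Delta(\psi_*(\omega_X\otimes P_\beta))}$ is then a nonzero sheaf supported on $V^0(\psi_*(\omega_X\otimes P_\beta))$, so this locus is nonempty: there is $\gamma\in\PK$ with $H^0(X,\omega_X\otimes P_\beta\otimes\psi^*P_\gamma)\neq0$, i.e. $\beta+\iota(\gamma)\in V^0(\omega_X,a_X)$. Since $\iota(\gamma)$ lies in $\ker(\PA_X\to\PB)$, this point maps to $\bar\beta$, proving surjectivity.

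Part 1) follows immediately. By surjectivity some component $P_i+\PA_i$, necessarily with $\dim\PA_i>0$, maps onto $\PB$, so $\PA_i\to\PB$ is surjective. Because $\PK$ is simple, $\ker(\PA_i\to\PB)=\PA_i\cap\PK$ is either finite or all of $\PK$. In the latter case $\dim\PA_i=\dim\PB+\dim\PK=\dim\PA_X$, forcing $\PA_i=\PA_X$, which is impossible since every component of $V^0(\omega_X,a_X)$ is proper. Hence $\PA_i\cap\PK$ is finite and $\PA_i\to\PB$ is an isogeny, as required; the torsion translate is irrelevant for this conclusion.

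For part 2) it remains to prove $\deg\pi=1$. Let $\nu\colon\wZ\to Z$ be a desingularization. Since the general fibre $F$ of $g$ is contracted by $X\dashrightarrow\wZ$ while $a_X(F)$ generates the translate of $B$, the surjection $A_X\to\Alb(\wZ)$ induced by $g$ kills $B$ and so factors as $A_X\xrightarrow{p_K}K\xrightarrow{\rho}\Alb(\wZ)$; conversely $\wZ\to K$ induces $\sigma\colon\Alb(\wZ)\to K$. As $A_X\to\Alb(\wZ)$ equals $\rho\circ p_K$ and $\sigma\circ(A_X\to\Alb(\wZ))=p_K$, surjectivity of $p_K$ gives $\sigma\circ\rho=\id_K$; thus $\rho$ is injective, while $\rho$ is also surjective because $A_X\to\Alb(\wZ)$ is, so $\rho$ is an isomorphism and $\Alb(\wZ)\cong K$. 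Consequently $\deg\pi=\deg(a_{\wZ}\colon\wZ\to K)$, and $\pi$ is birational if and only if $\wZ$ is. If $\wZ$ is not of general type, Kawamata's theorem on varieties of maximal Albanese dimension forces $\wZ$ to be birational to its Albanese $K$, giving $\deg\pi=1$. The hard part will be excluding the remaining possibility that $\wZ$ is of general type with $\deg a_{\wZ}>1$: equivalently, one must show that no component of $V^0(\omega_X,a_X)$ contains $\PK$, for then $V^0(\omega_{\wZ})\subseteq V^0(\omega_X,a_X)\cap\PK$ would be proper, $\chi(\omega_{\wZ})=0$ would follow, and a general-type $\wZ$ with the simple Albanese $K$ cannot have $\chi=0$. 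I expect this to be the main obstacle, since such a general-type Stein base exists abstractly; ruling it out must use primitivity of $X$ together with the structural results on $\chi=0$ varieties that underlie Theorem \ref{maintheorem}.
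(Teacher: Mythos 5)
Your part 1) is essentially the paper's own argument, written out in more detail: the paper's one-line deduction ``$\chi(\omega_F)>0$, hence $V^0(\omega_X,a_X)\to\PA_X/\PK$ is surjective'' is exactly your pushforward argument, and the passage from surjectivity to an isogeny component is the same. One caveat: you invoke ``Hacon's generic vanishing theorem'' for $\psi_*(\omega_X\otimes P_\beta)$ with $\beta$ an \emph{arbitrary} lift, but the untwisted results cited in the paper give GV-ness only for $\psi_*\omega_X$ (or for torsion twists, via cyclic \'etale covers). This is easily repaired: since the image of $V^0(\omega_X,a_X)$ in $\PA_X/\PK$ is closed and torsion points are dense, it suffices to run your argument for torsion $\bar\beta$ with a torsion lift $\beta$, where $\omega_X\otimes P_\beta$ is a direct summand of $\rho_*\omega_{X'}$ for an \'etale cover $\rho\colon X'\to X$ and GV-ness follows from the untwisted theorem.

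Part 2), however, has a genuine gap, which you yourself flag: you never exclude the case that the Stein factor $\wZ$ of $X\to K$ is of general type with $\deg(\wZ/K)\geq 2$. Worse, the statement you propose to prove in order to exclude it --- that no irreducible component of $V^0(\omega_X,a_X)$ contains $\PK$ --- is \emph{false}, so that route cannot be completed. In the paper's own model (Proposition \ref{mainprop}, e.g.\ the Ein--Lazarsfeld threefold, which is primitive of $\chi=0$), one has $V^0(\omega_X)=\PA_1\cup\PA_2\cup\PA_3$ with $\PA_1=\{0\}\times\PK_2\times\PK_3$, $\PA_3=\PK_1\times\PK_2\times\{0\}$, and the simple subvariety $\PK=\{0\}\times\PK_2\times\{0\}$ is contained in both $\PA_1$ and $\PA_3$, even though $X\to K_2$ \emph{is} a fibration (its general fiber is $F_1\times F_3$). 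So your condition is sufficient but far from necessary, and it fails precisely in the examples the lemma is designed to cover.

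The idea you are missing is the paper's product trick, which turns the positivity of $\chi(\omega_M)$ into a contradiction rather than trying to disprove it. Assume $X\to K$ is not a fibration and let $X\to M\to K$ be a smooth model of the Stein factorization, so $\deg(M/K)\geq 2$. First, $M\to K$ cannot be \'etale: otherwise $M$ is abelian, $X\to M$ factors through $A_X$ by the Albanese property, and dualizing forces $M\cong K$. Combined with simplicity of $K$ this gives $\chi(\omega_M)>0$ (if $M$ were not of general type it would be birational to an abelian variety, hence \'etale over $K$; if $M$ were of general type with $\chi(\omega_M)=0$, then $\Alb(M)=K$ would be simple, contradicting the fact from \cite{CDJ} that such varieties need at least three simple factors). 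Now --- and this is the key step --- part 1) supplies a component $T_i$ of $V^0(\omega_X)$ with $T_i\times\PK\to\PA_X$ an isogeny; letting $Y_i$ be the Stein factor of $X$ over the dual quotient $A_i$, the map $X\to M\times Y_i$ is generically finite and surjective by a dimension count, whence $\chi(\omega_X)\geq\chi(\omega_M)\,\chi(\omega_{Y_i})>0$, contradicting $\chi(\omega_X)=0$. In other words, the paper never needs $\chi(\omega_M)=0$; it plays $\chi(\omega_M)>0$ off against the complementary component produced by part 1).
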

\begin{proof}
For $1)$, we consider the surjective morphism $g: X\rightarrow A_X\rightarrow K$. Since $X$ is primitive, for an irreducible component $F$ of a general fiber of $g$,
we have
$\chi(F, \omega_F)>0$. Hence the natural morphism $V^0(\omega_X,a_X)\rightarrow \PA_X\rightarrow \PA_X/\PK$ is surjective. Therefore there
exists an irreducible component $\PA_i$ of $V^0(\omega_X, a_X)$ such that the natural morphism $\PA_i\rightarrow \PA_X/\PK$ is an isogeny.

We argue by contradiction to prove $2)$. Assume that $X\rightarrow A$ is not a fibration, we take a modification of Stein factorization $X\rightarrow M\rightarrow K$ such that $M$ is a smooth projective variety. Then the morphism $M\rightarrow K$ is not an \'etale morphism and is of degree $\geq 1$.
Since $K$ is simple, we have $\chi(M, \omega_M)>0$. On the other hand, by 1), there is an irreducible component
$T_i$ of $V^0(\omega_X)$ such that $T_i\times \PK\rightarrow \PA_X$ is an isogeny. After taking an \'etale cover of $X$, we may assume that $T_i=\PA_i$ passes through the origin of $\PA_X$. Hence $X\rightarrow M\times Y_i$ is generically finite and surjective.
However, we then have $\chi(X, \omega_X)\geq \chi(M, \omega_M)\chi(Y_i, \omega_{Y_i})>0$, which is a contradiction.
\end{proof}
\begin{coro}\label{corollarycomponents}Let $X$ be primitive of $\chi=0$. Then,
\begin{itemize}
\item[(1)]if $A_X$ has $m$ simple factors, then $V^0(\omega_X)$ has at least $m$ irreducible components;
\item[(2)]if $A_X$ has $3$ simple factors, then each irreducible component $\PA_i$ of $V^0(\omega_X)$ has $2$ simple factors.
\end{itemize}
\end{coro}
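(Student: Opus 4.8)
The plan is to handle the two parts separately, with Lemma \ref{lemma1} as the common engine, and to work throughout with the underlying abelian subvarieties of the components (both assertions depend only on these, and under the Convention of Section 3 we may assume every component of $V^0(\omega_X)$ passes through the origin). The decisive observation for (1) is that Lemma \ref{lemma1}.1 produces not just a component meeting a given simple $\PK$ properly, but a genuine complement: if the composition $\PA_i\to\PA_X\to\PA_X/\PK$ is an isogeny, then $\PA_i\cap\PK$ is finite and $\PA_i+\PK=\PA_X$. I would write $\PA_X$ up to isogeny as $\prod_t U_t$, where $U_t\cong\PB_t^{n_t}$ is the isotypic part of the pairwise non-isogenous simple factor $\PB_t$, so that $m=\sum_t n_t$. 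A simple subvariety $\PK$ of type $t$ lies inside $U_t$; intersecting the complement relation with $U_t$ shows that $\PA_i\cap U_t$ is a hyperplane of $U_t$ (over the division algebra $D_t:=\mathrm{End}(\PB_t)\otimes\Q$) not containing $\PK$. Letting $I_t$ be the set of components $\PA_i$ with $\PA_X/\PA_i$ isogenous to $\PB_t$, Lemma \ref{lemma1}.1 then says that for every line $\PK\subseteq U_t$ some member of $I_t$ cuts out a $D_t$-hyperplane avoiding it, so $\bigcap_{\PA_i\in I_t}(\PA_i\cap U_t)=0$ inside $U_t$. An intersection of hyperplanes in the $n_t$-dimensional $D_t$-space $U_t$ can be trivial only if there are at least $n_t$ of them, whence $|I_t|\ge n_t$; as the isogeny type of $\PA_X/\PA_i$ is an invariant of $\PA_i$, the sets $I_t$ are disjoint, and the number of components is at least $\sum_t|I_t|\ge\sum_t n_t=m$.

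For (2) I would first bound the number of factors of a single component $\PA_i$ from both sides. A component with three factors has dimension $\dim\PA_X$, hence equals $\PA_X$; but then generic vanishing for the GV-sheaf $a_{X*}\omega_X$ gives $h^0(\omega_X\otimes P_\alpha)=\chi(\omega_X)=0$ for generic $\alpha$, contradicting $\PA_X=V^0(\omega_X)$. A zero-dimensional component is excluded because $X$ is of general type and of maximal Albanese dimension, so by \cite{el} every component of $V^0(\omega_X)$ is positive-dimensional. Thus every component has one or two factors, and the entire content of (2) reduces to ruling out a component $\PA_1$ that is a single simple subvariety.

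To rule this out I would apply Lemma \ref{lemma1}.1 to $\PK:=\PA_1$ itself, obtaining a component $\PA_c$ complementary to $\PA_1$, so that $A_X\to A_1\times A_c$ is an isogeny with $A_1$ simple and $A_c$ of two factors. Both $\PA_1$ and $\PA_c$ are codimension-$c$ components of $V^0(\omega_X)$, hence components of $V^c(\omega_X)$ by \cite[Proposition 3.15]{PP-GV}, so the Stein factorizations $X\to Y_1\to A_1$ and $X\to Y_c\to A_c$ are instances of diagram (\ref{stein}): the maps $h_1,h_c$ are finite and $V^0(h_{1*}\omega_{Y_1})=\widehat{A_1}$, $V^0(h_{c*}\omega_{Y_c})=\widehat{A_c}$. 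Since $h_{1*}\omega_{Y_1}$ and $h_{c*}\omega_{Y_c}$ are GV-sheaves whose $V^0$ is the whole dual, their generic twist has a nonzero section and no higher cohomology, giving $\chi(\omega_{Y_1})=\chi(h_{1*}\omega_{Y_1})>0$ and likewise $\chi(\omega_{Y_c})>0$. The product map $(q_1,q_c)\colon X\to Y_1\times Y_c$, followed by the finite $h_1\times h_c$, recovers $a_X$ composed with the isogeny $A_X\to A_1\times A_c$; it is therefore generically finite, and surjective by comparison of dimensions. The monotonicity of $\chi$ under generically finite surjective morphisms of varieties of maximal Albanese dimension (exactly as used in the proof of Lemma \ref{lemma1}) then yields $\chi(\omega_X)\ge\chi(\omega_{Y_1})\chi(\omega_{Y_c})>0$, contradicting $\chi(\omega_X)=0$. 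Hence no component is simple, and every component of $V^0(\omega_X)$ has exactly two factors.

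The step I expect to be the main obstacle is the hyperplane-intersection count in (1): one must verify that restricting the complement relation to an isotypic component $U_t$ genuinely yields a $D_t$-hyperplane missing the prescribed simple subvariety, and that this forces at least $n_t$ distinct components even when the simple factors are mutually isogenous (the case where $U_t$ really has $D_t$-dimension $n_t$ and where naive distinctness arguments break down). For (2) the only points needing care are the positivity $\chi(\omega_{Y_1}),\chi(\omega_{Y_c})>0$ and the $\chi$-monotonicity, both of which are already part of the toolkit assembled above.
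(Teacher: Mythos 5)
Your proof is correct. For calibration: the paper offers no proof at all---the corollary is stated as an immediate consequence of Lemma \ref{lemma1}---so the comparison is with the one-line argument the authors presumably had in mind. In part (1) your extra work is genuinely needed, not optional: the naive deduction (apply Lemma \ref{lemma1}(1) to each simple factor and note that the resulting components are distinct because their quotients are non-isogenous) is valid only when the simple factors of $\PA_X$ are pairwise non-isogenous, and you correctly identified the isotypic case as the crux. Your hyperplane count rests on the standard correspondence between abelian subvarieties of $\PB_t^{n_t}$ and $D_t$-submodules of $D_t^{n_t}$, which is sound; a softer alternative avoiding endomorphism algebras is induction on $m$, intersecting the remaining components with one component complementary to a chosen simple subvariety. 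In part (2) your case analysis is complete and each exclusion is correct (generic vanishing for the three-factor case; \cite[Theorem 3]{el} for isolated points, exactly as the paper itself invokes it; and the product trick for simple components). Note, though, that your third case replays the paper's own proof of Lemma \ref{lemma1}(2), whereas quoting Lemma \ref{lemma1}(2) directly is shorter: if $\PA_1$ were a simple component, that lemma would make $X\to A_1$ a fibration, so its Stein factorization would be trivial, contradicting $V^0(h_{1*}\omega_{Y_1})=\PA_1\neq\{0\}$ from \cite[Theorem 3.1]{CDJ}. Two blemishes, neither fatal. First, $\PA_1$ and $\PA_c$ do not both have codimension $c$ (their codimensions are complementary); what you actually need, and what is true, is that each is a codimension-$k$ component of $V^0(\omega_X)$ for its own $k$, hence lies in $S_X$ by \cite[Proposition 3.15]{PP-GV}, so that the machinery of diagram (\ref{stein}) applies to each separately. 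Second, your appeal to \cite[Theorem 3.1]{CDJ} presupposes that the components pass through the origin; this matches the convention under which the paper itself operates in Sections 3 and 6, but for torsion-translated components one should either justify the reduction to that case by an \'etale cover or invoke the torsion version of the decomposition (Theorem \ref{decompositionwithtorsion}).
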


We end this section with a conjectural characterization of primitive varieties of $\chi=0$.
\begin{conj}\label{conj}
Let $X$ be a smooth projective variety. Then $X$ is a primitive variety of $\chi=0$ if and only if there exist simple abelian varieties $K_1, K_2, \ldots, K_{2k+1}$, double coverings from normal projective varieties of general type $F_i\rightarrow K_i$ with involutions $\sigma_i$, and an isogeny $K_1\times\cdots\times K_{2k+1}\rightarrow A_X$ such that the base change $\widetilde{X}$ is birational to $$(F_1\times\cdots\times F_{2k+1})/\langle\sigma_1\times\cdots\times\sigma_{2k+1}\rangle.$$
\end{conj}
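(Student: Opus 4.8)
The statement is an equivalence, and the two directions are of very different character. The sufficiency (``if'') direction can be established unconditionally, whereas the necessity (``only if'') direction is the genuinely hard one: it is proved in this paper only under the extra hypothesis that $A_X$ has three simple factors (Theorem \ref{maintheorem}), and the general case is precisely what the conjecture leaves open. So the plan is to prove sufficiency in full, and then to outline the strategy of Theorem \ref{maintheorem} for necessity while flagging the combinatorial obstruction in higher rank.

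For sufficiency, suppose $\widetilde X$ is birational to $Q=(F_1\times\cdots\times F_{2k+1})/\langle\sigma\rangle$ with $\sigma=\sigma_1\times\cdots\times\sigma_{2k+1}$ and $F_i\to K_i$ double covers. Since $\chi(\omega)$, general type, maximal Albanese dimension, and primitivity are all invariant under the étale base change $\widetilde X\to X$ and under birational modification (by pullback of fibrations), it suffices to treat $Q$. Write $W=F_1\times\cdots\times F_{2k+1}$, let $p_i\colon A:=K_1\times\cdots\times K_{2k+1}\to K_i$ be the projections, and let $\rho\colon Q\to A$ be the generically finite map induced by the $\sigma$-equivariant $W\to A$ (with $\sigma$ trivial on $A$). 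For each double cover, relative duality gives $\pi_{i*}\omega_{F_i}\simeq\cO_{K_i}\oplus L_i$, with $\sigma_i$ acting by $+1$ on $\cO_{K_i}$ and $-1$ on $L_i$. By Künneth and taking $\sigma$-invariants,
\[
\rho_*\omega_Q\simeq\bigoplus_{\substack{S\subseteq\{1,\dots,2k+1\}\\ |S|\ \mathrm{even}}}\ \bigotimes_{i\in S}p_i^*L_i ,
\]
whence $\chi(Q,\omega_Q)=\sum_{|S|\ \mathrm{even}}\prod_{i\in S}\chi(K_i,L_i)\prod_{i\notin S}\chi(K_i,\cO_{K_i})$. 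Because $\chi(K_i,\cO_{K_i})=0$, the only potentially nonzero Künneth term is the one with $S=\{1,\dots,2k+1\}$; but this set has odd cardinality and is therefore absent from the invariant part. Hence $\chi(Q,\omega_Q)=0$, and the oddness of $2k+1$ is exactly what forces the vanishing.

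The more delicate point is primitivity, which I would deduce from Lemma \ref{reduction1}. First note that each $F_i$ is of general type and of maximal Albanese dimension with \emph{simple} Albanese $K_i$; since a $\chi=0$ variety of this type must have at least three simple factors (\cite{CDJ}), we get $\chi(\omega_{F_i})=\chi(K_i,L_i)>0$, using also Green--Lazarsfeld positivity \cite{gl1}. Now a nontrivial fibration would, by Lemma \ref{reduction1}, factor through a quotient $A\to B$ whose kernel is $\prod_{i\in T}K_i$ for some $\emptyset\neq T\subsetneq\{1,\dots,2k+1\}$ (the factors being simple), with the general fiber still of vanishing $\chi$. The key observation is that over such a proper quotient $\sigma$ permutes the $2^{|T^c|}$ sheets of the fiber of $W\to B$ \emph{freely} — it flips the sheet labels on the nonempty complement $T^c$ — so the general fiber of $Q\to B$ is birational to a disjoint union of copies of $\prod_{i\in T}F_i$, with $\chi(\omega)=\prod_{i\in T}\chi(K_i,L_i)>0$. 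This contradiction shows $X$ is primitive of $\chi=0$.

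For the necessity direction I would run the strategy underlying Theorem \ref{maintheorem}. From $X$ primitive of $\chi=0$ one has $q(X)=\dim X$, $a_X$ generically finite and surjective, and the decomposition $a_{X*}\omega_X\simeq\cO_{A_X}\oplus\bigoplus_k P_k^{-1}\otimes p_k^*\cF_k$ of Theorems \ref{decomposition}--\ref{decompositionwithtorsion}, whose M-regular summands $\cF_k$ are indexed by the components of $V^0(\omega_X)$; Lemma \ref{lemma1} and Corollary \ref{corollarycomponents} then describe these components. One reconstructs the double covers $F_i\to K_i$ from the fibrations $X\to K_i$ and shows that the base change of $a_X$ to $(F_1\times\cdots\times F_{2k+1})/\langle\sigma\rangle$ is birational by iterating the criteria of Propositions \ref{criteria} and \ref{criteria3-2finite} together with Corollary \ref{corollary-1}. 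The main obstacle is exactly the passage beyond three factors: for three simple factors Corollary \ref{corollarycomponents}(2) pins each component of $V^0(\omega_X)$ to a product of two factors, which keeps the combinatorics of the support loci and their mutual intersections manageable and lets the pairwise criterion \ref{criteria3-2finite} close the argument; for general $2k+1$ one must simultaneously control the intersection pattern of many higher-dimensional components and reconstruct all the double covers at once, and it is this combinatorial/inductive control that remains unresolved and accounts for the conjecture being stated rather than proved.
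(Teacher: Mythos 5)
You have read the status of this statement correctly: it is a conjecture that the paper never proves --- the authors remark only that one direction is ``fairly standard'' and prove the other direction only when $A_X$ has three simple factors (Theorem \ref{maintheorem}); your sketch of that necessity argument is a faithful summary of Section 6, and your honest flagging of the combinatorial obstruction beyond three factors matches the paper's own position. Your K\"unneth-and-parity computation of $\chi(Q,\omega_Q)=0$ (only even subsets survive $\sigma$-invariance, $\chi(\cO_{K_i})=0$ kills every proper subset, and oddness of $2k+1$ excludes the full one) is the standard Ein--Lazarsfeld argument and is sound.

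The genuine gap is in the primitivity half of the sufficiency direction, which you claim to prove in full. You assert that the connected kernel $K'$ of the quotient $A\to B$ produced by Lemma \ref{reduction1} is a coordinate subproduct $\prod_{i\in T}K_i$, ``the factors being simple.'' Simplicity justifies this only when the $K_i$ are pairwise non-isogenous, which the conjecture does not assume (nothing forbids the $F_i\to K_i$ from all being the same double cover). Concretely, take $K_1=K_2=K_3=E$ and $K'=\{(x,y,z)\in E^{3}: x+y+z=0\}$: this is a proper abelian subvariety with all three projections surjective, and the general Stein fiber of $Q\to A/K'$ is the $\sigma$-quotient of a connected $(\Z/2)^{3}$-cover of the abelian surface $K'$, not a disjoint union of copies of any $\prod_{i\in T}F_i$; your free sheet-permutation argument does not apply to such fibrations, so no contradiction is reached there. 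The gap is repairable. Let $N=\{\,i : p_i(K')=0\,\}$ (in your coordinate case $N=T^{c}$). If $N\neq\emptyset$, then $K'\subseteq\prod_{i\notin N}K_i$, the Stein fiber is the fiber product over $K'$ of the double covers pulled back via $p_i|_{K'}$ for $i\notin N$, and its $\chi$ is a sum of Euler characteristics of nef line bundles on $K'$ (hence of nonnegative terms), including $\chi\bigl(K',\bigotimes_{i\notin N}(p_i|_{K'})^{*}L_i\bigr)$, which is positive because that bundle is the restriction of an ample bundle along the inclusion $K'\subseteq\prod_{i\notin N}K_i$. If $N=\emptyset$, the Stein fiber has $\chi=\sum_{|S|\,\mathrm{even}}\chi\bigl(K',\bigotimes_{i\in S}(p_i|_{K'})^{*}L_i\bigr)$; all terms are nonnegative, and some term with $|S|=2k$ is positive, because otherwise $K'$ would meet every coordinate copy of $K_m$ in a positive-dimensional subgroup, hence contain all of them and equal $A$, contradicting nontriviality of the fibration. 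Finally, a smaller slip: deducing $\chi(\omega_{F_i})>0$ from the three-factor bound of \cite{CDJ} presumes $\Alb(F_i)$ is isogenous to $K_i$, which fails in general when $\dim K_i=1$ (there $\Alb(F_i)$ is the Jacobian of the curve $F_i$, though positivity is then trivial); the uniform argument is that the branch divisor of $F_i\to K_i$ is a nonzero effective divisor in $|L_i^{2}|$ on a simple abelian variety, hence ample, so $L_i$ is ample and $\chi(K_i,L_i)>0$.
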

Together with Proposition \ref{reduction}, Conjecture \ref{conj} gives all possible structures for varieties of general type, of maximal Albanese dimension, and of $\chi(\omega_X)=0$.
We note one direction of Conjecture \ref{conj} is fairly standard but the other direction seems rather difficult.
In next section, we will prove Conjecture \ref{conj} when $A_X$ has only $3$ factors.
\section{Three simple factors}
In this section£¬ we assume that $X$ is of general type, of maximal Albanese dimension, with $\chi(X, \omega_X)=0$, and $A_X$ has only three simple factors.
 In particular, $X$ is primitive of $\chi=0$ (see \cite[Proposition 4.5]{CDJ}).
We are interested in the structure (up to \'etale covers and birational modifications) of such varieties.

We are free to take \'etale covers of $X$ and hence we always assume that each component of $V^0(\omega_X)$ passes through the origin of $\PA_X$ in this section.
We then write $V^0(\omega_X)=\cup_{i=1}^N\PA_i$. We know that the complementary of $\PA_i\hookrightarrow \PA_X$ is a simple abelian variety and $N\geq 3$.

The main theorem of this section is the following generalization of \cite[Theorem 5.1]{CDJ}.

\begin{theo}\label{miantheorem}
Let $X$ be a variety of general type, of maximal Albanese dimension, with $\chi(X, \omega_X)=0$, and
assume that $A_X$ has only three simple factors. There exist
simple abelian varieties $K_1$, $K_2$, $K_3$, double coverings
from normal varieties
 $F_i\rightarrow K_i$ with associated involution $\tau_i$, and an isogeny
 $\eta \colon K_1 \times K_2 \times K_3 \to A_X$, such that the base change $\tilde X$ is birational
 to $$(F_1\times F_2\times F_3)/\langle \sigma \rangle$$
 where $\sigma=\tau_1\times\tau_2\times\tau_3$ is the diagonal involution.
 That is, we have the following commutative diagram:

$$
\xymatrix@C=15pt
 {&(F_1\times F_2\times F_3)/\langle \sigma \rangle \ar@{->>}[dr]\\
\widetilde X\ar@{->>}[rr]^{a_{\widetilde X}}\ar@{}[drr]|\square\ar@{->>}[ur]^-\eps\ar@{->>}[d]&& K_1\times K_2\times K_3\ar@{->>}[d]^\eta\\
X\ar@{->>}[rr]_{a_X} &&A_X. }
$$
where  $\eps$ is a desingularization.

\end{theo}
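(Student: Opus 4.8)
The plan is to combine the decomposition theorem of Section 3 with the birational criteria of Section 4, organising everything around the three coordinate fibrations cut out by the components of $V^0(\omega_X)$. After replacing $X$ by a finite \'etale cover (which affects neither the hypotheses nor the conclusion, up to enlarging the isogeny $\eta$), I assume every component of $V^0(\omega_X)$ passes through the origin, and I write $A_X$ up to isogeny as $K_1\times K_2\times K_3$, so that $\PA_X=\PK_1\times\PK_2\times\PK_3$. By Corollary \ref{corollarycomponents} every component of $V^0(\omega_X)$ has exactly two simple factors and its complement is simple; applying Lemma \ref{lemma1}(1) to each simple factor $\PK_i\hookrightarrow\PA_X$ produces a component isogenous to the coordinate subvariety $\PA_{jk}:=\PK_j\times\PK_k$ (with $\{i,j,k\}=\{1,2,3\}$), so after one more \'etale cover the three coordinate subvarieties $\PA_{12},\PA_{13},\PA_{23}$ occur among the components. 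For each I take the modification of the Stein factorization $X\xrightarrow{q_{jk}}Y_{jk}\xrightarrow{h_{jk}}K_j\times K_k$ of diagram (\ref{stein}) and apply Theorem \ref{decomposition}, obtaining
$$\cW_{X/A}\simeq\bigoplus_{jk}p_{jk}^*\cF_{jk},$$
where $p_{jk}\colon A_X\to K_j\times K_k$ is the projection and $\cF_{jk}$ is M-regular on $K_j\times K_k$; using Lemma \ref{induction} together with Corollary \ref{corollarycomponents} (there is no proper two-factor subcomponent inside $\PA_{jk}$) one identifies $\cF_{jk}=\cW_{Y_{jk}/K_j\times K_k}$. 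Any further components, which can appear only when two of the $\PK_i$ are isogenous, are absorbed into $\eta$.

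Next I isolate the building blocks. For $\{i,j,k\}=\{1,2,3\}$ the general fibre $F_i$ of $q_{jk}\colon X\to Y_{jk}$ has dimension $\codim(\PA_{jk},\PA_X)=\dim K_i$, and since it lies over a single point of $K_j\times K_k$ the Albanese map restricts to a generically finite surjection $F_i\to K_i$ onto the \emph{simple} abelian variety $K_i$. Restricting cohomological support loci to $F_i$ (the $K_j$- and $K_k$-directions are constant on $F_i$) gives $\PA_{ij}|_{F_i}=\PA_{ik}|_{F_i}=\PK_i$, whence $V^0(\omega_{F_i})=\PK_i$ and, because $K_i$ is simple, $\cW_{F_i}:=\cW_{F_i/K_i}$ is M-regular. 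The target of this step is to prove that each $\cW_{F_i}$ is a \emph{line bundle} $\cL_i$; equivalently, that $F_i\to K_i$ is a double cover with associated involution $\tau_i$ and that $\cF_{jk}$ splits as $\mathrm{pr}_j^*\cL_j\otimes\mathrm{pr}_k^*\cL_k$. This is where the main difficulty lies. Rank-counting alone is insufficient: from $\rank\cW_{X/A}=\deg a_X-1$ and $\deg a_X=\deg(Y_{jk}\to K_j\times K_k)\cdot\deg(F_i\to K_i)$ one only gets relations that also admit solutions with $\deg(F_i\to K_i)>2$, so genuine geometric input is needed. I expect to reach rank one by feeding $\cF_{jk}$ back into the decomposition theorem relative to each simple factor (Lemma \ref{induction} applied to $Y_{jk}\to K_j$ and to $Y_{jk}\to K_k$), thereby exhibiting $\cF_{jk}$ as an external product, and then using the compatibility of the two fibrations $X\to Y_{ij}$ and $X\to Y_{ik}$ through their common factor $K_i$, together with the existence of the diagonal involution, to force the degree to be exactly $2$.

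With the double covers $F_i\to K_i$ and involutions $\tau_i$ in hand, the assembly uses the criteria of Section 4. A double cover of the abelian variety $K_j\times K_k$ is determined by its invariant sheaf $\cW_{Y_{jk}/K_j\times K_k}=\cF_{jk}=\mathrm{pr}_j^*\cL_j\otimes\mathrm{pr}_k^*\cL_k$; since $(F_j\times F_k)/\langle\tau_j\tau_k\rangle\to K_j\times K_k$ is the double cover with the same invariant, one obtains $Y_{jk}\sim_{\mathrm{bir}}(F_j\times F_k)/\langle\tau_j\tau_k\rangle$. The three fibrations then glue: the natural morphism $X\to Y_{12}\times_{K_1}Y_{13}$ over $A_X$ is generically finite, and I apply Proposition \ref{criteria3-2finite} (in the form of Corollary \ref{corollary-1}) to the pair of maps sharing the common simple factor $K_1$. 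The components of the sets $S(\cdot)$ attached to the two maps are contained in the distinct coordinate loci $\PA_{12}$ and $\PA_{13}$, hence disjoint, so the criterion yields birationality; the degree count $\deg a_X=4=\deg\bigl((F_1\times F_2\times F_3)/\langle\sigma\rangle\to A_X\bigr)$ confirms that no sheet is lost. Identifying $Y_{12}\times_{K_1}Y_{13}$ with $(F_1\times F_2\times F_3)/\langle\sigma\rangle$, where $\sigma=\tau_1\times\tau_2\times\tau_3$, and taking a desingularization $\eps$, produces the asserted commutative square.

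In summary, the surrounding scaffolding — the splitting $\cW_{X/A}\simeq\bigoplus p_{jk}^*\cF_{jk}$, the realisation of the $F_i$ as fibres over the $Y_{jk}$, and the final gluing through the shared simple factor — follows fairly directly from Theorem \ref{decomposition}, Lemma \ref{lemma1}, and Proposition \ref{criteria3-2finite}. The genuinely hard part is the second step: it is the rigidity forcing each block $F_i\to K_i$ to be a $\mathbb{Z}/2$-cover, rather than an arbitrary generically finite cover of the simple abelian variety $K_i$, that requires the full strength of M-regularity of the $\cF_{jk}$ combined with the compatibility imposed by the diagonal involution.
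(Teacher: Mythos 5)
Your outline reproduces the paper's scaffolding for what the paper calls the \emph{special} case (coordinate components $\PA_{jk}$, the Stein factorizations $Y_{jk}$, the decomposition $\cW_{X/A_X}\simeq\bigoplus p_{jk}^*\cF_{jk}$, and the gluing via Corollary \ref{corollary-1}), but it has two genuine gaps, and the first is fatal to the structure of the argument. You dismiss the possibility of extra components of $V^0(\omega_X)$ with ``absorbed into $\eta$.'' This cannot work: under an \'etale base change induced by an isogeny, components of $V^0$ through the origin map injectively to components of $V^0$ of the cover, so no choice of $\eta$ ever removes them. When two of the $K_i$ are isogenous one can indeed have $N\geq 4$ components, and this is exactly the hard case the paper spends all of Subsection 6.2 on: one reduces to an $X$ that is minimal among non-quasi-special intermediate varieties, uses the fiber products $Z_{ij}$ and Proposition \ref{quasi-special} to show the degrees $d_i$ are all $2$ (or all $2$ except $d_1$), pins down $S_{Z_{23}}=\{\PA_1,\PA_2,\PA_3\}$, and finally derives a contradiction from isotriviality together with the Guerra--Pirola finiteness theorem \cite{gp}. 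Nothing in your proposal addresses this case.

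The second gap is your admitted ``main difficulty'' step, and the route you sketch for it is circular: you propose to use ``the existence of the diagonal involution'' to force $\deg(F_i/K_i)=2$, but that involution only exists after the covers are known to be double covers. In fact your assertion that rank counting is insufficient is backwards --- in the paper it is precisely the decisive step. Once one knows (i) that $X\to Y_i\times_{K_k}Y_j$ is birational (your gluing step, via Corollary \ref{corollary-1}; this forces all $\deg t_i$ equal to a common $a$ with $\deg a_X=a^2$), and (ii) that $V^0(\omega_X)$ has \emph{exactly} three components --- Lemma \ref{3components}, proved via the isotriviality Lemma \ref{isotrivial-fibers} and a K\"unneth computation on $F_1\times F_2\times F_3$, a step entirely absent from your outline --- then Proposition \ref{degrees} gives $a^2-1=3(a-1)$, hence $a=2$. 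Finally, your identification of $Y_{jk}$ with $(F_j\times F_k)/\langle\tau_j\times\tau_k\rangle$ ``because a double cover is determined by its invariant sheaf'' is false as stated (a double cover also requires the datum of a branch divisor, i.e.\ a section of the square of the line bundle); the paper argues instead that $K(A_1)\subset K(Y_1)\subset K(F_2\times F_3)$ is an intermediate field of a Galois extension with group $\mathbb{Z}_2\times\mathbb{Z}_2$, and that the diagonal quotient is the only degree-$2$ intermediate extension of general type.
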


Let $X\xrightarrow{f_i} Y_i\xrightarrow{t_i} A_i$ be a  modification of the Stein factorization of the morphism $X\xrightarrow{a_X}A_X\xrightarrow{p_i} A_i$ such that $Y_i$ is smooth projective, and denote by $a_{X*}\omega_X=\cO_{A_X}\oplus\cW_{X/A_X}$ and $t_{i*}\omega_{Y_i}=\cO_{A_i}\oplus\cF_i$, where $\cF_i$ is M-regular by Lemma \ref{lemma1}.

Since $A_X$ has only three simple factors and $X$ is primitive, by Lemma \ref{lemma1} 2), we know that $$S_X=\{\PA_i\mid 1\leq i\leq N\}.$$
By Theorem \ref{decomposition}, we have the following proposition, which is our starting point.
\begin{prop}\label{degrees}We have $\cW_{X/A_X}\simeq\bigoplus_{i=1}^Np_i^*\cF_i$. In particular, $$\deg a_X-1=\sum_{i=1}^N(\deg t_i-1).$$
\end{prop}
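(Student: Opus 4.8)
The plan is to apply Theorem \ref{decomposition} directly. Since $X$ is primitive of $\chi=0$ with $A_X$ having three simple factors, Lemma \ref{lemma1} part 2) guarantees that for each simple abelian subvariety $\PK\hookrightarrow \PA_X$ the induced morphism $X\to A_X\to K$ is a fibration. Consequently the relevant Stein factorizations $f_i$ are genuine fibrations, and every irreducible component $\PA_i$ of $V^0(\omega_X)$ that can appear in $S_X$ does appear there; this is exactly the asserted equality $S_X=\{\PA_i\mid 1\leq i\leq N\}$, which we may take as already established in the preceding paragraph. The decomposition theorem then applies to $\cW_{X/A_X}$ (recall $a_{X*}\omega_X=\cO_{A_X}\oplus\cW_{X/A_X}$ since $a_X$ is surjective), yielding
$$\cW_{X/A_X}\simeq\bigoplus_{i=1}^N p_i^*\cF_i,$$
where each $\cF_i$ is the nontrivial M-regular direct summand of $t_{i*}\omega_{Y_i}$ on $A_i$ furnished by Theorem \ref{decomposition}. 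To identify $\cF_i$ with the $\cF_i$ appearing in the splitting $t_{i*}\omega_{Y_i}=\cO_{A_i}\oplus\cF_i$, I would note that each $\PA_i$ has codimension equal to $\dim K_i$, that $t_i$ is generically finite onto $A_i$ so $\cO_{A_i}$ splits off $t_{i*}\omega_{Y_i}$, and that by Lemma \ref{lemma1} the complementary summand is M-regular—matching the M-regular summand produced by the theorem.

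For the degree identity I would pass to ranks. Taking generic rank on both sides of the isomorphism, and using that $p_i^*\cF_i$ has the same generic rank as $\cF_i$ (since $p_i$ is a smooth surjective morphism, pullback preserves generic rank), we get
$$\rank\cW_{X/A_X}=\sum_{i=1}^N\rank\cF_i.$$
Now the left-hand side is computed from $a_{X*}\omega_X=\cO_{A_X}\oplus\cW_{X/A_X}$: since $a_X$ is generically finite and surjective of degree $\deg a_X$, the generic rank of $a_{X*}\omega_X$ equals $\deg a_X$, hence $\rank\cW_{X/A_X}=\deg a_X-1$. Similarly, from $t_{i*}\omega_{Y_i}=\cO_{A_i}\oplus\cF_i$ and $\deg t_i=\rank t_{i*}\omega_{Y_i}$ we obtain $\rank\cF_i=\deg t_i-1$. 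Substituting gives precisely
$$\deg a_X-1=\sum_{i=1}^N(\deg t_i-1).$$

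The only genuine content here beyond bookkeeping is the identification of the summands $\cF_i$ from the two sources (the decomposition theorem versus the Stein factorization splitting), and I expect this matching to be the main point to verify carefully. Concretely, Theorem \ref{decomposition} asserts that the M-regular sheaf on $A_k$ it produces is a direct summand of $h_{k*}\omega_{Y_k}$; in the present notation $h_k=t_i$ and $Y_k=Y_i$, so the produced sheaf is a summand of $t_{i*}\omega_{Y_i}=\cO_{A_i}\oplus\cF_i$. Since the produced summand is nontrivial and M-regular while $\cO_{A_i}$ is not M-regular (it has $V^0=\{0\}$ of codimension $\dim A_i>0$), the produced summand must be $\cF_i$ up to the decomposition of $t_{i*}\omega_{Y_i}$ into M-regular and non-M-regular parts. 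Because $Y_i$ dominates a simple quotient $A_i$ via a generically finite map, $t_{i*}\omega_{Y_i}$ has exactly one M-regular summand, namely $\cF_i$, so the identification is forced. Once this is pinned down, everything else is the elementary rank computation above.
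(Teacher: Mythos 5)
Your skeleton is the paper's own: the preceding paragraph supplies $S_X=\{\PA_i\mid 1\leq i\leq N\}$ and the M-regularity of $\cF_i$, Theorem \ref{decomposition} supplies the decomposition, and the degree identity is the generic-rank count (which you do correctly, including the observation that $p_i^*$ preserves generic rank). The gap is in the step you yourself single out as the crux: identifying the summand produced by Theorem \ref{decomposition} with the sheaf $\cF_i$ defined by $t_{i*}\omega_{Y_i}=\cO_{A_i}\oplus\cF_i$. Your justification rests on the claim that $A_i$ is a \emph{simple} quotient of $A_X$, and that is false in this setting: by Corollary \ref{corollarycomponents}(2) each $\PA_i$ has two simple factors (the simple quotients of $A_X$ are the $K_j$; each $A_i$ is isogenous to a product of two of them). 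Simplicity of $A_i$ is precisely what would make M-regularity of $\cW_{Y_i/A_i}$ automatic; since $A_i$ is not simple, this is a real statement, and in the paper it comes from Lemma \ref{lemma1} 2) via Lemma \ref{induction}: a positive-dimensional proper subtorus of $\PA_i$ is dual to a simple quotient $K$ of $A_X$, the composition $X\to A_X\to K$ is a fibration, and \cite[Theorem 3.1]{CDJ} then excludes such a subtorus from $S_X$, i.e.\ $S^*_{Y_i}=\emptyset$.

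Moreover, even granting that $\cF_i$ is M-regular, "exactly one M-regular summand" is not a valid uniqueness principle: every direct summand of an M-regular sheaf is again M-regular, so unless $\cF_i$ is indecomposable it has many M-regular direct summands, and the \emph{statement} of Theorem \ref{decomposition} alone does not exclude that the summand $\cF_i'$ it produces is a proper direct summand of $\cF_i$ --- in which case neither the displayed isomorphism nor the degree identity for the $\cF_i$ of the setup would follow. (Your parenthetical reason that $\cO_{A_i}$ is not M-regular is also misstated: M-regularity constrains $V^j$ only for $j>0$; $\cO_{A_i}$ fails because $V^{\dim A_i}(\cO_{A_i})=\{0\}$ has codimension exactly $\dim A_i$.) The clean fix is to use the construction inside the proof of Theorem \ref{decomposition} rather than its statement: in Step 1 there, the sheaf attached to $\PA_i$ is defined so that $t_{i*}\omega_{Y_i}$ is the direct sum of it, of $\cO_{A_i}$, and of sheaves pulled back from duals of elements of $S^*_{Y_i}$; since $S^*_{Y_i}=\emptyset$ here, that sheaf is exactly the complement of $\cO_{A_i}$, namely $\cF_i$. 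Alternatively, a cohomology count forces the identification: for $P\in\PA_i$ general, the computation of Step 3 of that proof (Koll\'ar splitting plus Hacon's generic vanishing) gives $h^{c_i}(A_X,\cW_{X/A_X}\otimes p_i^*P)=h^0(A_i,t_{i*}\omega_{Y_i}\otimes P)=h^0(A_i,\cF_i\otimes P)$ with $c_i=\codim(\PA_i,\PA_X)$, while your decomposition gives $h^{c_i}(A_X,\cW_{X/A_X}\otimes p_i^*P)=h^0(A_i,\cF_i'\otimes P)$; writing $\cF_i=\cF_i'\oplus\cG_i$, the complement $\cG_i$ is M-regular with $h^0(\cG_i\otimes P)=0$ for general $P$, and since a nonzero M-regular sheaf has $V^0$ equal to all of $\PA_i$, this forces $\cG_i=0$.
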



\subsection{Characterization of special primitive varieties}
In this subsection, we are going to prove  Theorem \ref{miantheorem} for
special primitive varieties of $\chi=0$ satisfying :
\begin{itemize}
\item[(\dag 1)]  $A_X=K_1\times K_2\times K_3$.

\item[(\dag 2)] $V^0(\omega_X)$ contains three components
$\PA_1=\{0\}\times\PK_2\times\PK_3$,
$\PA_2=\PK_1\times\{0\}\times\PK_3$, and
$\PA_3=\PK_1\times\PK_2\times\{0\}$, and

\item[(\dag 3)]the induced morphism $X\xrightarrow{}
Z_k:=Y_i\times_{K_k} Y_j$ is birational, for $\{i, j, k\}=\{1, 2,3\}$.

\end{itemize}

Recall that we have generically finite morphism $Y_i \to A_i$ and
the induced fibrations $h_{ij} \colon Y_i \to K_j$. These fit
into the following diagram
\begin{eqnarray}\label{diagram2}
\xymatrix{Z_1 \ar[d]_{g_{12}}\ar[drr]^{g_{13}} && Z_2 \ar[d]_(.3){g_{23}}\ar[drr]_{g_{21}} && Z_3 \ar[d]^{g_{31}} \ar[dllll]_(.2){g_{32}}\\
Y_2 \ar[d]_{h_{23}}\ar[drr]^{h_{21}} && Y_3 \ar[d]_(.3){h_{31}}\ar[drr]_{h_{32}} && Y_1 \ar[d]^{h_{12}} \ar[dllll]_(.2){h_{13}}\\
K_3  && K_1  && K_2,
}
\end{eqnarray}
where $ Z_i $ is the main component of the fiber product over $K_i$. We note that,
by \cite[Theorem 2.3]{CH}, $Z_i$ is
of general type.

We denote $a_i=\deg t_i=\deg(Y_i/A_i)$ for $i=1, 2, 3$. Then by
assumption $(\dag 3)$, $\deg a_X=a_ia_j$ for any $1\leq i\neq
j\leq 3$. Hence $a:=a_1=a_2=a_3$.

\begin{lemm}\label{isotrivial-fibers}
There are smooth varieties of general type $F_1, F_2, F_3$
generically finite over $K_1, K_2, K_3$ respectively such that
the fibration $h_{ij}: Y_i\rightarrow K_j$ is isotrivial with a
general fiber $F_k$, for $\{i, j, k\}=\{1, 2, 3\}$.
\end{lemm}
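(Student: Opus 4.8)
The goal is to show that each fibration $h_{ij} \colon Y_i \to K_j$ is isotrivial, with a common general fiber $F_k$ depending only on the target index pair. The strategy is to exploit the three birational identifications $X \sim Z_1 \sim Z_2 \sim Z_3$ coming from $(\dag 3)$ together with the rigidity forced by the simplicity of the $K_i$. The key geometric observation is that $Z_k = Y_i \times_{K_k} Y_j$ carries \emph{two} fibrations to $K_k$-type bases, and a general fiber of one of these fibrations is itself a product of a general fiber of $h_{ij}$ with a general fiber of $h_{ji}$; since $X$ (hence $Z_k$) maps birationally in two compatible ways, the fibers of the various $h_{ij}$ must match up consistently.

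\textbf{First steps.}
First I would fix $\{i,j,k\}=\{1,2,3\}$ and examine the fibration $Z_k \to K_k$ obtained from diagram \eqref{diagram2}. Because $Z_k$ is the main component of $Y_i \times_{K_k} Y_j$, a general fiber over a point $t \in K_k$ is (birational to) $F^{(k)}_{i,t} \times F^{(k)}_{j,t}$, where $F^{(k)}_{i,t}$ denotes the general fiber of $h_{ik} \colon Y_i \to K_k$ and similarly for $j$. Using the birationality $X \simeq Z_k$, the fiber of the composite $X \to K_k$ agrees with this product. Now I would run the same analysis for a different index, say comparing the two expressions for a general fiber of $X \to K_k$ that arise from $Z_i$ and $Z_j$ respectively. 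Comparing the resulting decompositions forces each factor to be independent of the base point $t$, up to birational equivalence: this is precisely the assertion that each $h_{ij}$ is isotrivial. Concretely, the independence of the product structure on $t$, combined with the fact that the three base varieties $K_1, K_2, K_3$ are simple abelian varieties over which no nontrivial family of canonically polarized varieties can vary without contributing to $\chi > 0$, pins the fiber down to a single variety $F_k$. Defining $F_k$ to be this common fiber and taking a smooth model, I then check that $F_k \to K_k$ is generically finite (equivalently $F_k$ is of general type of the right dimension) using that $X$ is of maximal Albanese dimension and of general type, so no positive-dimensional fibers map to points.

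\textbf{Assembling the $F_k$.}
Having produced a candidate $F_k$ as the general fiber of $h_{ij}$ for each ordered pair, I would verify the consistency needed for the statement: the fiber of $h_{ij}$ and the fiber of $h_{kj}$ (the two fibrations landing on $K_j$ from $Y_i$ and $Y_k$) must both be birational to the \emph{same} $F_j$. This follows by a second comparison through the fiber products: the variety $Z_\ell$ (for the remaining index $\ell$) realizes both fibrations as factors of a single product fibration over $K_j$, and the birational rigidity forces the two fibers to coincide. Finally I would confirm that $F_k$ is smooth of general type and generically finite over $K_k$, which is the precise content of the lemma; the smoothness is arranged by replacing $F_k$ with a resolution, and general type follows since $F_k$ embeds as a general fiber of a variety of general type of maximal Albanese dimension.

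\textbf{The main obstacle.}
The hard part will be establishing isotriviality rigorously, i.e.\ upgrading the pointwise product decomposition of fibers into an honest statement that the family $h_{ij} \colon Y_i \to K_j$ is isotrivial rather than merely fiberwise-constant in a loose sense. The delicate issue is that the birational matching $X \simeq Z_k$ only controls fibers over a \emph{general} point of $K_k$, and one must argue that the induced comparison between the two different product structures (coming from $Z_i$ versus $Z_j$) is compatible over an open dense set, so that a single variety $F_k$ serves as the general fiber. I expect this to require a careful use of the simplicity of the $K_i$ — so that any subfibration of $h_{ij}$ would contradict primitivity via Lemma \ref{lemma1} and Corollary \ref{corollarycomponents} — together with the fact that the total space fiber products $Z_k$ are of general type by \cite[Theorem 2.3]{CH}, which prevents the family from genuinely varying. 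Managing the birational ambiguities (exceptional loci of the various Stein factorizations and desingularizations) so that the fibers really do match on a common open set is where the technical weight of the argument lies.
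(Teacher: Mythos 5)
Your skeleton does match the paper's opening moves: both exploit the product structure of the fibers of $Z_k\to K_k$ and the birational identifications $X\sim Z_k$ from $(\dag 3)$, and both aim to identify the general fiber of $h_{ik}$ with a single fixed variety by comparing two fiber-product descriptions of the same fiber. But the step you yourself flag as ``the main obstacle'' --- upgrading the fiberwise comparison to actual isotriviality --- is precisely the content of the lemma, and your proposal does not supply a mechanism for it. The two substitutes you offer would not work: the assertion that ``no nontrivial family of canonically polarized varieties can vary over a simple abelian variety without contributing to $\chi>0$'' is neither established in the paper nor citable in this form (isotriviality statements of that kind are deep Viehweg--Zuo/Kebekus--Kov\'acs type theorems for smooth canonically polarized families, and nothing of the sort is used or needed here); and ``$Z_k$ is of general type, which prevents the family from genuinely varying'' is simply false --- general type of a total space places no constraint on the variation of a fibration on it.

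The missing idea is an elementary degree count. From $(\dag 3)$ one first gets $a:=\deg(Y_1/A_1)=\deg(Y_2/A_2)=\deg(Y_3/A_3)$. Fix a general $t\in K_1$ and let $F_{2t}\subset Y_3$, $F_{3t}\subset Y_2$ be the fibers of $h_{31}$, $h_{21}$ over $t$, so that the fiber of $Z_1\to K_1$ is $F_{2t}\times F_{3t}$. Restrict the dominant map $\tau\colon Z_1\dashrightarrow X\to Y_1$ to this fiber; together with the projection to $F_{3t}$ this induces a map $\tau'\colon F_{2t}\times F_{3t}\dashrightarrow W$ over $F_{3t}$, where $W$ is the main component of $Y_1\times_{K_3}F_{3t}$. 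Now compute degrees over $Y_1$: on one hand $\deg(F_{2t}\times F_{3t}/Y_1)=a^2/\deg(Y_1/K_2\times K_3)=a$, and on the other hand $\deg(W/Y_1)=\deg(F_{3t}/K_3)=\deg(Y_2/A_2)=a$. Hence $\tau'$ is birational. Since $\tau'$ commutes with the projections to $F_{3t}$, and $F_{3t}\to K_3$ is dominant, the fibers of $W\to F_{3t}$ --- which are exactly the general fibers of $h_{13}\colon Y_1\to K_3$ --- are all birational to the single fixed variety $F_{2t}$. This gives isotriviality of $h_{13}$ and simultaneously identifies the general fibers of $h_{13}$ and of $h_{31}$ with one variety $F_2$, generically finite of degree $a$ over $K_2$. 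This last point also corrects an indexing slip in your ``assembling'' paragraph: the fibrations whose fibers must coincide are $h_{ij}$ and $h_{ji}$ (both have fiber $F_k$), not $h_{ij}$ and $h_{kj}$, whose fibers are $F_k$ and $F_i$ respectively.
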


\begin{proof}We will show that  $h_{13}: Y_1\rightarrow K_3$ is isotrivial and the general fiber of $h_{13}$
 is birational to a general fiber of $h_{31}: Y_3\rightarrow K_1$. The same argument works for other fibrations.

Since $X\xrightarrow{}  Z_1$ is birational, there exists a
dominant map $\tau: Z_1 \dashrightarrow X \rightarrow Y_1$ fits
into  the following commutative diagram:
\begin{eqnarray*}
\xymatrix{
X\ar[rr]^{}\ar[drrr]_{f_1} && Z_1 \ar@{.>}[dr]|-{\tau}\ar[rr]^{(f_{12}, f_{13} )}&& K_2\times K_3\\
&&& Y_1\ar[ur]_{(h_{12}, h_{13})}},
\end{eqnarray*}
where $f_{12}=h_{32}\circ g_{13}$ and $f_{13}=h_{23}\circ g_{12}$.

Fix a general point $t\in K_1$, and denote respectively by
$F_{2t}$ and $F_{3t}$ the fibers of $h_{31}$ and $h_{21}$ over
$t$. Then the fiber of $Z_1 \rightarrow K_1$  is isomorphic to
$F_{3t}\times F_{2t}$ with the commutative diagram
\begin{eqnarray}\label{diagram3}
\xymatrix{& F_{2t} \times F_{3t}\ar[dl] \ar[dr] \ar@{.>}[d]|-{\tau} & \\
F_{2t}\ar[dd]_{h_{32}}  & Y_1\ar[d] \ar[ddl]_{h_{12}} \ar[ddr]^{h_{13}}& F_{3t}\ar[dd]^{h_{23}}  \\
& K_2  \times K_3  \ar[dl] \ar[dr]& \\
K_2 && K_3\\
 }
\end{eqnarray}

Since $h_{21}$ is a fibration, we know that
$$ \deg(F_{3t}/K_3)=\deg (Y_2/A_2) =a.$$

Let $F_2$ be a general fiber of $h_{13}: Y_1 \to K_3$.  We
also have that $\deg(F_2/K_2)= \deg(Y_1/K_2\times K_3)=a$. Let $W$ be the main component of $ Y_1
\times_{K_3} F_{3t}$. From the right part of diagram (\ref{diagram3}), we see that there is a induced
rational dominant map $\tau': F_{2t} \times F_{3t} \dashrightarrow
W$ over $Y_1$. Since
$$\left\{\begin{array}{ll} \deg(F_{2t} \times F_{3t}/Y_1) & =a^2/
\deg(Y_1/K_2 \times K_3)=a,\\
 \deg(W/Y_1) &=\deg(F_{3t}/K_3)=a.
 \end{array} \right.
 $$
It follows that $\tau'$ is birational. Thus $h_{13}: Y_1 \to K_3$
is isotrivial with a general fiber $F_2$ birational to $F_{2t}$
for any general $t$.
\end{proof}

\begin{lemm}\label{3components}
 $V^0(\omega_X, a_X)$ has exactly
three irreducible components $\PA_1$, $\PA_2$, and $\PA_3$.
\end{lemm}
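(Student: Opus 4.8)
The plan is to show that, beyond the three coordinate loci $\PA_1,\PA_2,\PA_3$ fixed in $(\dag 2)$, no further irreducible component of $V^0(\omega_X,a_X)$ can occur. Throughout I keep in mind that, by Corollary \ref{corollarycomponents}(2), every component has exactly two simple factors, equivalently its complement in $\PA_X=\PK_1\times\PK_2\times\PK_3$ is a single simple abelian subvariety.

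First I would dispose of components sitting inside one of the $\PA_i$. Since $\PA_1=\{0\}\times\PK_2\times\PK_3$ and the Stein factorization of $X\to A_X\xrightarrow{p_1}A_1$ is $X\xrightarrow{f_1}Y_1\xrightarrow{t_1}A_1$ with $\cW_{Y_1/A_1}=\cF_1$ M-regular, Lemma \ref{induction} identifies the components of $V^0(\omega_X)$ contained in $\PA_1$ with $S_{Y_1}=S^0_{Y_1}\cup S^*_{Y_1}$; M-regularity of $\cF_1$ gives $S^*_{Y_1}=\emptyset$, so $\PA_1$ is the only such component. The same argument for $i=2,3$ shows $\PA_i$ is the unique component of $V^0(\omega_X)$ contained in $\PA_i$. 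Consequently any hypothetical further component $T$ lies in none of the $\PA_i$; as $\PA_i=\ker(\PA_X\to\PK_i)$ and $\PK_i$ is simple, $T$ must surject onto each of $\PK_1,\PK_2,\PK_3$. Having only two simple factors, $T$ can surject onto three simple abelian varieties only if two of the $\PK_i$ are isogenous, in which case $T$ is forced to be a graph-type (diagonal) subvariety, namely the graph of a nonzero isogeny between two factors times the third. Ruling out such a $T$ is the whole content of the lemma.

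To eliminate the diagonal case I would compute $a_{X*}\omega_X$ through the birational model $Z_1=Y_2\times_{K_1}Y_3$ provided by $(\dag 3)$. Over $K_1$ the relative dualizing sheaf of $Z_1$ is the external product of those of $Y_2/K_1$ and $Y_3/K_1$, while the maps $Z_1\to K_3$ and $Z_1\to K_2$ factor through $Y_2$ and $Y_3$ respectively; a relative K\"unneth computation over $K_1$ (after the desingularization $\eps$) should give
\[
\cW_{X/A_X}\simeq p_2^*\cF_2\oplus p_3^*\cF_3\oplus\big(\cF_2\boxtimes_{K_1}\cF_3\big),
\]
where $\cF_2\boxtimes_{K_1}\cF_3$ is the relative product over $K_1$ of $\cF_2$ on $A_2=K_1\times K_3$ and $\cF_3$ on $A_3=K_1\times K_2$. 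The first two summands are exactly the $\PA_2$- and $\PA_3$-pieces, so by the uniqueness in Theorem \ref{decomposition} the lemma is equivalent to the statement that $\cF_2\boxtimes_{K_1}\cF_3\simeq p_1^*\cF_1$, i.e.\ that this relative product is pulled back from $A_1=K_2\times K_3$ and carries a single M-regular summand. Here I would invoke the isotriviality of Lemma \ref{isotrivial-fibers}: since $h_{21}\colon Y_2\to K_1$ and $h_{31}\colon Y_3\to K_1$ are isotrivial with fibers $F_3,F_2$, the M-regular sheaves $\cF_2,\cF_3$ are, up to the monodromy of these families, pulled back from $K_3$ and $K_2$; their relative product over $K_1$ is then pulled back from $K_2\times K_3$, leaving no room for a component correlating the $\PK_2$- and $\PK_3$-directions, i.e.\ no diagonal $T$.

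The main obstacle is precisely this last identification: one must control the monodromy and twisting of the isotrivial families so that $\cF_2\boxtimes_{K_1}\cF_3$ genuinely decouples into a sheaf pulled back from $K_2\times K_3$, rather than acquiring a graph component in $\PK_2\times\PK_3$. I note that the outcome is self-consistent and in fact carries extra information: $p_1^*\cF_1$ has rank $a-1$ while the relative product has generic rank $(a-1)^2$, so the identification forces $a=2$, and the degree identity of Proposition \ref{degrees} (which reads $\sum_{i\ge 4}(\deg t_i-1)=(a-1)(a-2)$ once $\deg a_X=a^2$ and $a_1=a_2=a_3=a$ are used) vanishes exactly when $a=2$. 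Thus establishing the absence of diagonal components would yield both $N=3$ and $a=2$ at once.
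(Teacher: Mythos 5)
Your two preliminary reductions are sound: Lemma \ref{induction} plus the M-regularity of $\cF_i$ (Lemma \ref{lemma1}) does show that $\PA_i$ is the only component of $V^0(\omega_X,a_X)$ contained in $\PA_i$, and any further component would have to be of ``graph'' type. But the core of your argument fails at two levels. First, the decomposition you propose to start from, $\cW_{X/A_X}\simeq p_2^*\cF_2\oplus p_3^*\cF_3\oplus\bigl(\cF_2\boxtimes_{K_1}\cF_3\bigr)$, is false. The fiber product $Z_1=Y_2\times_{K_1}Y_3$ is in general \emph{not normal} (its normalization is $X$ itself), so the relative K\"unneth formula computes the dualizing sheaf of $Z_1$, whose pushforward strictly contains $a_{X*}\omega_X$; the discrepancy is a conductor correction along the non-normal locus, not something a desingularization fixes. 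You can test this on the structure the theorem predicts: if $X$ is birational to $(F_1\times F_2\times F_3)/\langle\sigma\rangle$ with $\pi_{i*}\omega_{F_i}=\cO_{K_i}\oplus L_i$, then $\cF_2=L_1\boxtimes L_3$ and $\cF_3=L_1\boxtimes L_2$, so $\cF_2\boxtimes_{K_1}\cF_3=L_1^{\otimes 2}\boxtimes L_2\boxtimes L_3$, whereas the actual third summand of $\cW_{X/A_X}$ (the $\sigma$-invariant computation) is $p_1^*\cF_1=\cO_{K_1}\boxtimes L_2\boxtimes L_3$. These differ even when $a=2$; worse, $L_1^{\otimes 2}\boxtimes L_2\boxtimes L_3$ is an ample line bundle on $A_X$, hence has $\chi>0$, which is incompatible with $\chi(\cW_{X/A_X})=0$. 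So your rank-count ``consistency check'' matches ranks but not sheaves, and the equivalence you set up (``the lemma is equivalent to $\cF_2\boxtimes_{K_1}\cF_3\simeq p_1^*\cF_1$'') rests on a formula that cannot hold.

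Second, and decisively, the step you yourself flag as ``the main obstacle'' --- that the relative product decouples into a sheaf pulled back from $K_2\times K_3$, with no graph component created by the monodromy of the isotrivial families --- is exactly the content of the lemma, and your proposal contains no argument for it. The paper avoids this problem entirely by changing the space rather than the sheaf: Lemma \ref{isotrivial-fibers} and hypothesis $(\dag 3)$ produce a dominant generically finite rational map from the \emph{honest} product $F_1\times F_2\times F_3$ onto $X$, whose morphism to $A_X$ is a genuine product $(\alpha,\beta,\gamma)$. There the absolute K\"unneth formula applies: since each $K_i$ is simple, $V^1$ of each factor is a finite set of points, so $V^1(\omega_{F_1\times F_2\times F_3},p)$ is a finite union of translates of $\PA_1,\PA_2,\PA_3$. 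One then concludes using $V^0(\omega_X,a_X)=V^1(\omega_X,a_X)$ (which follows from $\chi(\omega_X)=0$ and generic vanishing) and the inclusion $V^1(\omega_X,a_X)\subseteq V^1(\omega_{X'},a_X\circ\rho)$ for a generically finite cover $\rho\colon X'\to X$, plus the fact that every component of $V^0(\omega_X,a_X)$ passes through the origin. Passing to the product is precisely what kills the monodromy issue; working on $A_X$ with a relative product over $K_1$, as you do, leaves it untouched.
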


\begin{proof}
Let $F_{ij}:=F_i \times F_j$ and $F_{123} :=F_1 \times F_2 \times
F_3$. Since there are dominant generically finite rational map
$F_{12} \dashrightarrow Y_3$ (resp. $F_{13} \dashrightarrow Y_2$,
$F_{23} \dashrightarrow Y_1$), it is straightforward to see that
there exists dominant generically finite rational map $F_{123}
\dashrightarrow Z_k$ for $k=1,2,3$. In particular, they induce a
dominant generically finite rational map $F_{123} \dashrightarrow
X$. Resolve the indeterminancy via $\nu: X' \to F_{123}$, we have
a generically finite morphism $\rho \colon X' \to X$.

We claim that $V^1(X', a_X \circ \rho)$ consists of finite unions
of translation of $\PA_1$, $\PA_2$, and $\PA_3$. Since
$\chi(\omega_X)=0$,
$$V^0(\omega_X, a_X) =V^1(\omega_X, a_X) \subset V^1(\omega_{X'},
a_X \circ \rho)$$ and each component of $V^0(\omega_X, a_X)$ is a
non-simple abelian subvariety passing through the origin, we
conclude the Lemma.

It remains to prove the claim. Let $p=(\alpha, \beta, \gamma):
F_{123} \to A_X=K_1 \times K_2 \times K_3$ be the given surjective
morphism. Clearly, $\nu$ is birational and $p \circ \nu=a_X \circ
\rho$. Hence
$$V^1(\omega_{X'}, a_X \circ \rho)=V^1(\omega_{X'}, p \circ
\nu)=V^1(\omega_{F_{123}}, p).$$

Since $K_1, K_2$ and $K_3$ are simple abelian varieties,
$V^1(\omega_{F_1},\alpha)$, $V^1(\omega_{F_2},\beta)$ and
$V^1(\omega_{F_3}, \gamma)$ are finite union of isolated points.
By K\"unneth formula,

\begin{eqnarray*}V^1(\omega_{F_{123}}, p)
&=&\big(V^1 (\omega_{F_1},\alpha) \times \PK_2 \times \PK_3
\big)\bigcup
\big(\PK_1\times V^1(\omega_{F_2}, \beta)\times\PK_3\big)\\
&&\bigcup \big(\PK_1\times\PK_2\times
V^1(\omega_{F_3},\gamma)\big).
\end{eqnarray*}
This verifies the claim.
\end{proof}

By Lemma \ref{3components},
$V^0(\omega_X)=\PA_1\cup\PA_2\cup\PA_3$.  Thus we apply
Proposition \ref{degrees} and get $a^2-1=3(a-1)$. Hence $a=2$ and
$\deg(Y_i/A_i)=\deg(F_i/K_i)=2$, for $i=1, 2, 3$.

Replace $F_i$ by its Stein factorization over $K_i$, we may and do
assume that $F_i$ is normal and double cover over $K_i$. Let
$\tau_i$ be the corresponding involution.

The covering $F_2 \times F_3 \to K_2 \times K_3=A_1$ is Galois
with Galois group $\mathbb{Z}_2 \times \mathbb{Z}_2$. The function
field $K(Y_1)$ is an intermediate field of the extension $K(F_2
\times F_3)/K(A_1)$ and of degree $2$ over $K(A_1)$. Together with
the fact that $Y_1$ is of general type, it follows that $Y_1$ is
birational to $(F_1 \times F_2)/\langle\tau_1\times\tau_2\rangle$ by exhausting
all intermediate fields.

It is clear to see that $Y_2, Y_3$ has the same structure. In
fact, the similar argument also shows that $X$ is birational to $
(F_1\times F_2\times F_3)/\langle\tau_1\times\tau_2\times\tau_3\rangle$.

We thus conclude this subsection that
\begin{prop}\label{mainprop} Under the hypothesis $\dag $, there exist simple abelian
varieties $K_1$, $K_2$, $K_3$, double coverings from normal
varieties
 $F_i\rightarrow K_i$ with associated involution $\tau_i$, such that  $ X$ is birational
 to $$(F_1\times F_2\times F_3)/\langle \sigma \rangle$$
 where $\sigma=\tau_1\times\tau_2\times \tau_3$ is the diagonal involution.
\end{prop}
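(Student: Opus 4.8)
The plan is to assemble Proposition~\ref{mainprop} from the three lemmas already established under hypothesis $\dag$, feeding the degree computation into the explicit field-theoretic reconstruction of each $Y_i$ and then of $X$ itself. First I would invoke Lemma~\ref{isotrivial-fibers} to produce smooth varieties of general type $F_1, F_2, F_3$, generically finite over $K_1, K_2, K_3$, such that each fibration $h_{ij}\colon Y_i \to K_j$ is isotrivial with general fiber $F_k$ (for $\{i,j,k\}=\{1,2,3\}$). Next I would apply Lemma~\ref{3components} to conclude that $V^0(\omega_X)$ has \emph{exactly} three components $\PA_1, \PA_2, \PA_3$, so that $N=3$ in Proposition~\ref{degrees}. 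Plugging $N=3$ into the degree identity $\deg a_X - 1 = \sum_{i=1}^N(\deg t_i - 1)$, together with the relation $\deg a_X = a^2$ and $\deg t_i = a$ recorded just before the lemmas, gives $a^2 - 1 = 3(a-1)$, hence $a = 2$. This is the numerical heart of the argument: it forces $\deg(F_i/K_i) = 2$ for each $i$, so after replacing $F_i$ by its Stein factorization over $K_i$ we may take $F_i \to K_i$ to be a normal double cover with associated involution $\tau_i$.

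With $a=2$ in hand, the next step is the reconstruction of each $Y_i$ as a quotient. The key observation is that the covering $F_2 \times F_3 \to K_2 \times K_3 = A_1$ is Galois with group $\mathbf{Z}_2 \times \mathbf{Z}_2$, generated by $\tau_2$ and $\tau_3$ acting on the two factors. Since $Y_1 \to A_1$ has degree $2$ and (by the isotriviality from Lemma~\ref{isotrivial-fibers}) its general fibers are birational to the $F_i$, the function field $K(Y_1)$ sits as a degree-$2$ intermediate field of the $(\mathbf{Z}_2\times\mathbf{Z}_2)$-extension $K(F_2 \times F_3)/K(A_1)$. There are exactly three such intermediate fields, corresponding to the three order-$2$ subgroups $\langle \tau_2 \rangle$, $\langle \tau_3 \rangle$, $\langle \tau_2\tau_3\rangle$. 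The fixed fields of $\langle\tau_2\rangle$ and $\langle\tau_3\rangle$ correspond to $F_3 \times K_2$ and $K_3 \times F_2$, which are not of general type; since $Y_1$ \emph{is} of general type, it must correspond to the diagonal subgroup $\langle\tau_2\tau_3\rangle$, i.e. $Y_1$ is birational to $(F_2 \times F_3)/\langle \tau_2 \times \tau_3\rangle$. By symmetry the same holds for $Y_2$ and $Y_3$.

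Finally, to reconstruct $X$ itself I would run the identical field-theoretic exhaustion one level up. The cover $F_1 \times F_2 \times F_3 \to A_X = K_1 \times K_2 \times K_3$ is Galois with group $(\mathbf{Z}_2)^3$, and $X \to A_X$ has degree $a^2 = 4$, so $K(X)$ is an intermediate field of index $2$ in $(\mathbf{Z}_2)^3$, i.e. corresponds to an order-$2$ subgroup of $(\mathbf{Z}_2)^3$. The constraints from the three fibrations $X \to Z_k \to Y_i$ together with the requirement that each intermediate quotient be of general type single out the diagonal involution $\sigma = \tau_1 \times \tau_2 \times \tau_3$, yielding $X$ birational to $(F_1 \times F_2 \times F_3)/\langle\sigma\rangle$. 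The step I expect to be the main obstacle is this last exhaustion of intermediate fields for $X$: unlike the $Y_i$ case, where a simple count of $(\mathbf{Z}_2\times\mathbf{Z}_2)$ has only three proper nontrivial subgroups and the general-type criterion eliminates two of them cleanly, the subgroup lattice of $(\mathbf{Z}_2)^3$ is far richer, and one must argue carefully — using the compatibility of $X$ with all three birational identifications $X \simeq Z_k$ and the general-type obstruction of Castelnuovo--de Franchis type (cf.\ \cite[Theorem 2.3]{CH}) — that the only admissible intermediate field is the one fixed by the diagonal $\sigma$.
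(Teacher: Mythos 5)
Your proposal is correct and follows essentially the same route as the paper's own proof: Lemma \ref{isotrivial-fibers} plus Lemma \ref{3components} feed $N=3$ into Proposition \ref{degrees} to get $a=2$, and then the Galois correspondence for the $\mathbb{Z}_2\times\mathbb{Z}_2$-cover $F_j\times F_k\to A_i$ identifies each $Y_i$ with the diagonal quotient, exactly as in the text. Your concern about the final exhaustion for $X$ is legitimate (the paper compresses it into ``the similar argument also shows''), and your proposed resolution is the right one: general type alone does not exclude subgroups such as $\langle\tau_1\tau_2\rangle$, whose quotient $\big((F_1\times F_2)/\langle\tau_1\times\tau_2\rangle\big)\times F_3$ \emph{is} of general type, but compatibility of $K(X)\subset K(F_1\times F_2\times F_3)$ with the dominations of $Y_2$ and $Y_3$ forces the order-two subgroup into $\langle\tau_2,\tau_1\tau_3\rangle\cap\langle\tau_3,\tau_1\tau_2\rangle=\{1,\tau_1\tau_2\tau_3\}$, i.e.\ onto the diagonal $\sigma$.
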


\subsection{The general case}
We prove the main theorem in this subsection. To start with,
it is convenient to introduce the following notions.

 \begin{defi}
 A primitive variety of $\chi=0$ is said to be {\it special} if it is birational to $(F_1\times F_2\times F_3)/\langle\tau_1\times\tau_2\times \tau_3\rangle$ as in Proposition \ref{mainprop}. A primitive variety $X$ of $\chi=0$
 is said to be {\it quasi-special} if there is an \'etale base change $\tilde{X} \to X$ so that $\tilde{X}$ is special.

 Given a variety $X$ primitive of $\chi=0$, we said that $X$ is {\it minimal} if $X$ is minimal among
  smooth projective varieties of general type sitting
between $X$ and $A_X$ (up to birational equivalent). More precisely,  let $X_0$ be a variety of general type sit between $X$ and $A_X$, then $X \to X_0$ is birational.
 \end{defi}

Fix a primitive variety $X$ with $\chi=0$.
For any component $\PA_i$, we have an map $t_i \colon Y_i \to A_i$. Let $d_i:=\deg(t_i)$.

For any two distinct components $\PA_i, \PA_j$, let $\PK_{ij}$ be the neutral component of $\PA_i\cap\PA_j$. We consider $Z_{ij}$ a desingularization of an irreducible component of the main component of $(Y_i\times_{K_{ij}}Y_j) \times_{A_i \times_{K_{ij}} A_j} A_X$. Replacing $X$ by its higher model, we may assume that there exists induced maps $\rho_{ij} \colon X \to Z_{ij}$ and $a_{ij} \colon Z_{ij} \to A_X$. It is easy to see the following properties of $Z_{ij}$:
\begin{itemize}
\item[1)] $Z_{ij}$ is of general type.
\item[2)] $\deg(Z_{ij}/A_X)| d_i d_j$.
\item[3)] $V^0(\omega_{Z_{ij}}, a_{ij}) \supset \PA_i, \PA_j$ and we have the natural morphisms $Z_{ij}\rightarrow Y_i\rightarrow A_i$ and $Z_{ij}\rightarrow Y_j\rightarrow A_j$. In particular, $Z_{ij}$ is not quasi-special if $d_i \ge 3$ or $d_j \ge 3$.
\end{itemize}


\begin{lemm} \label{etale} Pick  any three irreducible components, say $\PA_1$, $\PA_2$, and  $\PA_3$ of $V^0(\omega_X)$.
 Assume that all
 $\rho_{12}, \rho_{13}, \rho_{23}$ are  birational.

 Then  after an abelian \'etale base change
 $\tilde{X} \to X$, one may assume that $\tilde{X}$ satisfying $\dag 1, \dag 2,
\dag 3$. Hence $X$ is quasi-special.
 \end{lemm}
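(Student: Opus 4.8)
The plan is to read off from the three chosen components a decomposition of $\PA_X$ into simple pieces, up to isogeny, and then to realize $(\dag 1)$–$(\dag 3)$ by pulling back along the dual isogeny; the birationality hypothesis on the $\rho_{ij}$ is used first to guarantee that the three components are in good position, and afterwards to produce $(\dag 3)$ for free.

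First I would analyze the configuration of $\PA_1,\PA_2,\PA_3$. By Corollary \ref{corollarycomponents}(2) each $\PA_i$ is a non-simple abelian subvariety of $\PA_X$ with exactly two simple factors, so $\PA_X/\PA_i$ is simple. Using that all three $\rho_{ij}$ are birational---together with the degree bound $\deg(Z_{ij}/A_X)\mid d_id_j$ and the identity $\deg a_X-1=\sum(\deg t_i-1)$ of Proposition \ref{degrees}---I would show that the three components are in general position: for $i\neq j$ one has $\PA_i+\PA_j=\PA_X$, the neutral component $\PK_{ij}$ of $\PA_i\cap\PA_j$ is a simple abelian subvariety, and the addition map
\begin{equation*}
\mu\colon \PK_{23}\times\PK_{13}\times\PK_{12}\longrightarrow \PA_X
\end{equation*}
is an isogeny. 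Since $\PK_{12},\PK_{13}\subseteq\PA_1$, a dimension count then forces $\PA_1=\PK_{13}+\PK_{12}$, and likewise $\PA_2=\PK_{23}+\PK_{12}$ and $\PA_3=\PK_{23}+\PK_{13}$. I expect this configuration step to be the main obstacle: it is exactly here that one must rule out the degenerate triples in which two of the components share the same missing factor, and the bookkeeping of abelian subvarieties is genuinely delicate when the simple factors of $\PA_X$ happen to be mutually isogenous.

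Granting this, set $\PK_1:=\PK_{23}$, $\PK_2:=\PK_{13}$, $\PK_3:=\PK_{12}$ and let $K_1,K_2,K_3$ be the dual abelian varieties, so that $\widehat{K_i}=\PK_i$ and $\tilde A:=K_1\times K_2\times K_3$ is dual to $\PK_1\times\PK_2\times\PK_3$. Choosing the complementary isogeny $\nu\colon\PA_X\to\PK_1\times\PK_2\times\PK_3$ with $\nu\circ\mu=[\deg\mu]$ and dualizing, I obtain an isogeny $\eta:=\widehat\nu\colon \tilde A\to A_X$, whose kernel is finite abelian. Put $\tilde X:=X\times_{A_X}\tilde A$. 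Since $\eta$ is an abelian \'etale isogeny and $a_X$ is the generically finite surjective Albanese morphism, $\tilde X\to\tilde A$ is again the Albanese morphism, so $A_{\tilde X}=K_1\times K_2\times K_3$, which is $(\dag 1)$. For $(\dag 2)$ I would track cohomological loci under the cover: from $\tilde f_*\omega_{\tilde X}=\eta^*f_*\omega_X$ a standard projection-formula computation gives $V^0(\omega_{\tilde X})=\widehat\eta\bigl(V^0(\omega_X)\bigr)=\nu\bigl(V^0(\omega_X)\bigr)$, and since $[\deg\mu]$ carries each abelian subvariety of $\PK_1\times\PK_2\times\PK_3$ onto itself,
\begin{equation*}
\nu(\PA_1)=\nu\bigl(\mu(\{0\}\times\PK_2\times\PK_3)\bigr)=\{0\}\times\PK_2\times\PK_3,
\end{equation*}
and similarly $\nu(\PA_2)=\PK_1\times\{0\}\times\PK_3$ and $\nu(\PA_3)=\PK_1\times\PK_2\times\{0\}$, which is precisely $(\dag 2)$.

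Finally, $(\dag 3)$ follows from the hypothesis almost formally. Over $\tilde A=K_1\times K_2\times K_3$ the fibre product $A_i\times_{K_{ij}}A_j$ is covered by $\tilde A$, so the variety $Z_{ij}$ base-changes to $\tilde Y_i\times_{K_k}\tilde Y_j$ for $\{i,j,k\}=\{1,2,3\}$ (with $K_k=K_{ij}$), and the induced map $\tilde X\to\tilde Y_i\times_{K_k}\tilde Y_j$ is the base change of $\rho_{ij}$ along $\eta$. As birationality of these generically finite morphisms is preserved under \'etale base change, each of these maps is birational, giving $(\dag 3)$. Thus $\tilde X$ satisfies $(\dag 1)$–$(\dag 3)$, so Proposition \ref{mainprop} shows that $\tilde X$ is special; by definition $X$ is then quasi-special, which completes the proof.
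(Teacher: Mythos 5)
Your overall architecture matches the paper's: pull $X$ back along an isogeny that puts the three chosen components into coordinate position, then transfer the birationality of the $\rho_{ij}$ into $(\dag 3)$; and your step 2 (dualizing $\mu$, tracking $V^0$ under the cover) is fine as far as it goes. But there are two genuine gaps. The decisive one is the configuration step, which you explicitly leave open. This is not a routine verification to be deferred: it is the core of the lemma, and the paper does not prove it from scratch either --- it invokes \cite[Proposition 4.3]{CDJ} to produce the abelian \'etale cover realizing $(\dag 1)$ and $(\dag 2)$ (note that in the paper's proof this step uses no birationality hypothesis at all; that hypothesis enters only for $(\dag 3)$). Moreover, the route you sketch is not obviously adequate, because pairwise general position --- which is what degree counts most naturally give --- does not imply that your addition map $\mu$ is an isogeny. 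If the simple factors of $A_X$ are mutually isogenous, say $\PA_X\simeq \widehat{E}^{3}$ with $E$ an elliptic curve, then the three abelian surfaces $\{x=0\}$, $\{y=0\}$, $\{x=y\}$ each have two simple factors, satisfy $\PA_i+\PA_j=\PA_X$, and have simple pairwise intersections, yet all three pairwise intersections coincide with the $z$-axis, so $\mu$ has one-dimensional image and $(\dag 2)$ can never be arranged for such a triple. So the statement you need is genuinely triple-wise, and you give no argument that birationality of the three $\rho_{ij}$, or the identity of Proposition \ref{degrees}, excludes such degenerate triples; Corollary \ref{corollarycomponents} alone certainly does not.

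The second gap is your claim that $(\dag 3)$ follows ``almost formally'' by base change. The varieties $\tilde Y_i$ are defined as Stein factorizations of $\tilde X\to \tilde A_i$; identifying them with (components of) $Y_i\times_{A_i}\tilde A_i$, and hence identifying $\tilde Y_i\times_{K_k}\tilde Y_j$ with the base change of $Z_{ij}$, is precisely the non-formal content: fibre products along isogenies can be disconnected, Stein factorization does not commute naively with \'etale base change, and one must also know that $\tilde Y_i\times_{K_k}\tilde Y_j$ is irreducible. The paper sidesteps all of these identifications with a degree count: birationality of $\rho_{ij}$ gives $\deg a_X\mid d_id_j$; \'etaleness of the cover gives $\deg a_{\tilde X}=\deg a_X$; the fact that $t_i$ and $\tilde t_i$ are the Albanese morphisms of $Y_i$ and $\tilde Y_i$ gives $\deg \tilde t_i\geq d_i$; hence $\deg\big((\tilde Y_i\times_{K_k}\tilde Y_j)/\tilde A_X\big)\geq d_id_j\geq \deg a_{\tilde X}$, which forces $\tilde X\to \tilde Y_i\times_{K_k}\tilde Y_j$ to be birational. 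Your step 3 is repairable, but the repair is this degree argument, not the formal base-change observation you propose.
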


 \begin{proof}
By \cite[Proposition 4.3]{CDJ}, we know that after an abelian
\'etale cover $\widetilde{A}_X\rightarrow A_X$ and make the base
change:
 \begin{eqnarray*}
\xymatrix{
\widetilde{X}\ar[d]\ar[r]^{a_{\widetilde{X}}}& \widetilde{A}_X\ar[d]^{\pi}\\
X\ar[r]^{a_X}& A_X,}
\end{eqnarray*}
 we may assume that $\widetilde{A}_X=K_1\times K_2\times K_3$ and $\widehat{\widetilde{A_1}}=\widehat{\pi}(\PA_1)=\{0\}\times\PK_2\times\PK_3$, $\widehat{\widetilde{A_2}}=\widehat{\pi}(\PA_2)=\PK_1\times\{0\}\times\PK_3$, and $\widehat{\widetilde{A_3}}=\widehat{\pi}(\PA_3)=\PK_1\times\PK_2\times\{0\}$. As before, for $i=1,2,3$, we denote by  $\widetilde{X}\xrightarrow{\widetilde{g}_i} \widetilde{Y}_i\xrightarrow{\widetilde{t}_i} \widetilde{A_i}$ a modification of the Stein factorization of the natural morphism $\widetilde{X}\rightarrow \widetilde{A_i}$ such that $\widetilde{Y}_i$ is smooth projective.

 We claim that the induced morphism  $\widetilde{X}\rightarrow \widetilde{Y}_i\times_{K_k}\widetilde{Y}_j$
 is birational for
 $\{i, j, k\}=\{1,2,3\}$. To see this, note that since $\rho_{ij}$ is birational, we have $\deg a_X|(d_id_j)$, for any $i, j\in\{1,2,3\}$.

Thus $\deg a_{\widetilde{X}}=\deg a_X|(d_id_j)$.
 We also know that $t_i$ and $\widetilde{t}_i$ are respectively the Albanese morphisms of $Y_i$ and $\widetilde{Y}_i$. Thus $\deg\widetilde{t}_i\geq d_i$. Thus $\deg\big((\widetilde{Y}_i\times_{K_k}\widetilde{Y}_j)/\widetilde{A}_X\big)\geq d_id_j$ for $\{i, j, k\}=\{1,2,3\}$. Hence the induced morphism $\widetilde{X}\rightarrow \widetilde{Y}_i\times_{K_k}\widetilde{Y}_j$ is birational.
 \end{proof}

\begin{prop}\label{quasi-special} We have the following criterion for quasi-special primitive varieties with $\chi=0$:
\begin{enumerate}
\item if $X$ is minimal, then $X$ is quasi-special.

\item if $V^0(\omega_X, a_X)$ has three components, then $X$ is quasi-special.

\item if $\deg(X/A_X)=4$, then $X$ is quasi-special.
\end{enumerate}
\end{prop}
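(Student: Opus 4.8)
The plan is to prove the three criteria in Proposition \ref{quasi-special} by reducing each to an application of Lemma \ref{etale}, whose hypothesis requires that for some choice of three components $\PA_1,\PA_2,\PA_3$ of $V^0(\omega_X)$, all three induced maps $\rho_{12},\rho_{13},\rho_{23}$ to the fiber products $Z_{ij}$ are birational. Thus the common strategy is: pick three components, and show that each pairwise map $\rho_{ij}\colon X\to Z_{ij}$ is birational; then Lemma \ref{etale} produces the \'etale base change making $\widetilde X$ satisfy $\dag 1, \dag 2, \dag 3$, whence $\widetilde X$ is special by Proposition \ref{mainprop}, so $X$ is quasi-special. The work is therefore to verify birationality of the $\rho_{ij}$ in each of the three situations, and here the birationality criteria of Section 4---especially Corollary \ref{corollary-1} and Proposition \ref{criteria3-2finite}---are the natural tools.

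For part (2), when $V^0(\omega_X,a_X)$ has exactly three components $\PA_1,\PA_2,\PA_3$, I would show each $\rho_{ij}$ is birational using Corollary \ref{corollary-1}. Fixing a pair, say $\PA_1,\PA_2$, the map $\rho_{12}\colon X\to Z_{12}$ factors the Stein factorizations associated to $\PA_1$ and $\PA_2$; in the language of Proposition \ref{criteria}, the relevant factors $t_i$ attached to the finitely many components all but (at most) one should be birational, the point being that with only three components total and $X$ primitive, the contribution to $\cW_{X/A_X}=\bigoplus p_i^*\cF_i$ forces the degree bookkeeping $\deg a_X -1=\sum(\deg t_i-1)$ to be tight enough that $\rho_{12}$ cannot drop degree except possibly along one component. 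Applying Corollary \ref{corollary-1} then gives birationality of $\rho_{12}$, and symmetrically for the other pairs; Lemma \ref{etale} finishes.

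For part (1), if $X$ is minimal among the varieties of general type between $X$ and $A_X$, then any $\rho_{ij}\colon X\to Z_{ij}$ is a birational morphism: $Z_{ij}$ is of general type (property 1 of $Z_{ij}$) and sits between $X$ and $A_X$, so minimality of $X$ directly forces $\rho_{ij}$ to be birational. This immediately supplies the hypothesis of Lemma \ref{etale} for any chosen triple of components, giving quasi-speciality. For part (3), when $\deg(X/A_X)=4$, I would use the degree constraint together with Proposition \ref{degrees}: from $\deg a_X-1=\sum_i(\deg t_i-1)=3$ and each $\deg t_i\ge 2$ (since each $\PA_i$ is a genuine component contributing a nontrivial M-regular summand), the only possibility is that there are exactly three components each with $\deg t_i=2$. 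Property 2 of $Z_{ij}$ gives $\deg(Z_{ij}/A_X)\mid d_id_j=4$, and since $Z_{ij}\to A_X$ dominates both $Y_i\to A_i$ and $Y_j\to A_j$ of degree $2$, its degree is exactly $4=\deg(X/A_X)$, forcing $\rho_{ij}$ to be birational; Lemma \ref{etale} then applies.

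The main obstacle I anticipate is part (2): establishing that $\rho_{ij}$ is birational when one only knows there are three components but \emph{not} that $X$ is minimal or that $\deg(X/A_X)=4$. Here one cannot read off degrees directly, and the argument must genuinely invoke the birationality machinery---verifying that the hypothesis $S(a_{1*}\cQ_1)\cap S(a_{2*}\cQ_2)=\emptyset$ of Proposition \ref{criteria3-2finite} holds (or equivalently checking the $\Sigma_{nb}$ condition of Corollary \ref{corollary-1}) for the fiber-product maps. The delicate point is controlling the cohomological support loci of the $\cQ_{ij}$ attached to $Z_{ij}$ and showing they are confined to the expected components $\PA_i,\PA_j$, using property 3 of $Z_{ij}$ together with Proposition \ref{criteria} part 1); once this confinement is proven, the disjointness follows and birationality is forced, but verifying the confinement carefully is where the real content lies.
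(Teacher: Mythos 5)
Your overall architecture---reduce each case to Lemma \ref{etale} by proving birationality of the maps $\rho_{ij}\colon X\to Z_{ij}$, with Corollary \ref{corollary-1} as the engine---is the paper's, and your part (1) coincides with the paper's proof. But part (2), which you yourself flag as the crux, has a genuine gap: you never establish the hypothesis of Corollary \ref{corollary-1} for $\rho_{12}$. The ``degree bookkeeping'' $\deg a_X-1=\sum_i(\deg t_i-1)$ of Proposition \ref{degrees} cannot do this job: it constrains the degrees of the $Y_i$ over $A_i$ and says nothing about how $\rho_{12}$ behaves along the three components (already when all $d_i=2$ it leaves open $\deg(Z_{12}/A_X)=2$), and your fallback plan---verifying $S(a_{1*}\cQ_1)\cap S(a_{2*}\cQ_2)=\emptyset$ by ``confining'' the support loci attached to $Z_{12}$---is left unexecuted. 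The missing observation is much simpler, and it is the entire content of the paper's proof of (2): by construction $X\to Z_{12}\to Y_t$ for $t=1,2$, and since $X\to Y_t$ is a fibration (it is the Stein factorization of $X\to A_t$), the surjection $Z_{12}\to Y_t$ also has connected fibers; hence the Stein factorization of $Z_{12}\to A_t$ is again $Y_t$, and the induced map between the Stein factorizations of $X\to A_t$ and of $Z_{12}\to A_t$ (the map called $t_t$ in Proposition \ref{criteria}) is the identity, in particular birational, for $t=1,2$. Since $V^0(\omega_X,a_X)$ has exactly three components, the only possibly non-birational one is the map over $A_3$, and ``all $t_i$ but possibly one birational'' is precisely the hypothesis of Corollary \ref{corollary-1}. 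No confinement of $S(\cdot)$-sets needs to be checked by hand; that is exactly what Corollary \ref{corollary-1} (via Propositions \ref{criteria} and \ref{criteria3-2finite}) already packages.

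Your part (3) also deviates from the paper, with a smaller but real gap. The paper deduces (3) from (1): a minimal $X_0$ of general type between $X$ and $A_X$ is quasi-special by (1), hence $\deg(X_0/A_X)=4$, forcing $\deg(X/X_0)=1$. Your route---$3=\sum_i(d_i-1)$ with $d_i\geq 2$, plus Corollary \ref{corollarycomponents}, gives exactly three components with $d_i=2$---is fine, but at that point you should simply invoke part (2). Instead you assert $\deg(Z_{ij}/A_X)=4$ ``since $Z_{ij}$ dominates $Y_i$ and $Y_j$,'' which does not follow as stated: a priori the relevant component of the fiber product could have degree $2$ over $A_X$, and one must exclude this (for instance, degree $2$ would make $Z_{ij}$ birational to $Y_i\times_{A_i}A_X$, which is fibered in translates of an abelian variety and hence not of general type, contradicting property 1) of $Z_{ij}$). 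So (3) is repairable, but as written both (2) and (3) ultimately rest on the Stein-factorization observation above, which is the idea missing from your proposal.
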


\begin{proof}
$(1).$ Suppose that $X$ is minimal. We pick three components $\PA_1$, $\PA_2$, and $\PA_3$. For $(i,j)=(1,2), (1,3)$ and $(2,3)$,  $Z_{ij}$ of general type sitting  between $X$ and $A_X$
and hence is
 birational to $X$ by minimality of $X$.
Therefore $X$ is quasi-special by Lemma \ref{etale}.

$(2).$ For each $(i,j)=(1,2), (1,3)$ and $(2,3)$,
  we have the commutative diagram:
 \begin{eqnarray*}
 \xymatrix{
 X  \ar[r]^{\rho_{ij}}\ar[d] & Z_{ij}\ar[r]\ar[d]& A_X\ar[d]\\
 Y_t\ar@{=}[r] & Y_t\ar[r] & A_t,}
 \end{eqnarray*}
 where $t=i$ or $j$.

 Since $V^0(\omega_X)$ consists of three components $\PA_1$, $\PA_2$, and $\PA_3$, by Corollary \ref{corollary-1}, $\rho_{ij}$ is birational. Thus by Lemma \ref{etale}, $X$ is quasi-special

$(3).$ Again let $X_0$ be a minimal primitive variety dominated by $X$. Since $X_0$ is quasi-special, $\deg(X_0/A_X)=4$. It follows immediately that $\deg(X/X_0)$ $=1$ and hence $X$ is quasi-special.
\end{proof}

 \begin{prop} It $X$ is primitive of $\chi=0$ such that $A_X$ has three simple factors, then $X$ is quasi-special.
 \end{prop}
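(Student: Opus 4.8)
The plan is to reduce to the minimal case and then invoke the criteria of Proposition \ref{quasi-special}. First I would record the elementary but crucial monotonicity of $\chi$ along the tower over $A_X$: if $X\to W\to A_X$ with $W$ of general type, then writing the trace decomposition $h_*\omega_X=\omega_W\oplus\cQ'$ for $h\colon X\to W$, the sheaf $a_{W*}\cQ'$ is a direct summand of the GV-sheaf $a_{X*}\omega_X$, hence is itself GV, so $\chi(\cQ')\geq 0$ and therefore $0=\chi(\omega_X)=\chi(\omega_W)+\chi(\cQ')\geq\chi(\omega_W)\geq 0$. Thus every general type variety sitting between $X$ and $A_X$ has $\chi=0$ and, having three simple factors, is itself primitive of $\chi=0$. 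In particular this poset of intermediate varieties is nonempty and, degrees over $A_X$ being bounded below, has a minimal element $X_0$, which is minimal in the sense of the definition and hence quasi-special by Proposition \ref{quasi-special}(1). Consequently $\deg(X_0/A_X)=4$, and since $X_0$ is quasi-special its locus $V^0(\omega_{X_0})$ has exactly three components $\PA_1,\PA_2,\PA_3$ (Lemma \ref{3components}), with the associated maps of degree $2$ by Proposition \ref{degrees}.

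It then remains to prove that $t\colon X\to X_0$ is birational; granting this, $\deg(X/A_X)=4$ and Proposition \ref{quasi-special}(3) finishes the proof (equivalently $V^0(\omega_X)=\{\PA_1,\PA_2,\PA_3\}$ and one applies (2)). To attack birationality I would write $t_*\omega_X=\omega_{X_0}\oplus\cQ$ and analyse $\cQ$ through the composite $X\xrightarrow{t}X_0\xrightarrow{a_{X_0}}A_X$. Proposition \ref{criteria}(1) shows that every element of $S(a_{X_0*}\cQ)$, and in particular every new component of $V^0(\omega_X)=V^0(\omega_{X_0})\cup V^0(a_{X_0*}\cQ)$, is contained in $\bigcup_{j\in\Sigma_{nb}}\PA_j$. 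Since, by Corollary \ref{corollarycomponents}, each component of $V^0(\omega_X)$ is a two-factor abelian subvariety through the origin, any such component lying inside a union of the two-factor subvarieties $\PA_j$ must coincide with one of them; this is the mechanism that should force $V^0(\omega_X)$ to have exactly three components, after which Corollary \ref{corollary-1} reduces the birationality of $t$ to that of the induced maps $t_i\colon Y_i\to Y_i^{(0)}$ over each $A_i$.

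The hard part will be exactly this last reduction, because Proposition \ref{criteria}(1) is a priori self-referential: a spurious component $\PA_j$ with $t_j$ non-birational may lie trivially inside its own $\PA_j$, so one cannot immediately conclude $\Sigma_{nb}\subseteq\{1,2,3\}$ or that no new component appears. The plan to break this circularity is a descending induction along the Noetherian poset of intermediate primitive varieties: at a minimal element Proposition \ref{quasi-special}(1) pins down both the degree $4$ and the three components, and one propagates birationality upward by applying the fiberwise criteria to the two-factor models $Y_i^{(0)}$, which play the role of minimal models over $A_i$. Equivalently, the crux is to show that a primitive variety of $\chi=0$ admits no proper generically finite primitive cover, so that the special quotient is maximal; I expect this rigidity, rather than any naive component count in the isogenous case (where $\PA_X$ carries many two-factor subtori), to be the genuine obstacle.
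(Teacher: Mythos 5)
Your first two paragraphs are sound and run parallel to the paper's setup: the trace-splitting argument showing every general type variety $W$ between $X$ and $A_X$ has $\chi(\omega_W)=0$ (hence is primitive, since $A_W\simeq A_X$ still has three simple factors), the existence of a minimal intermediate variety $X_0$, and the observation that everything reduces to proving $t\colon X\to X_0$ birational. But that last step is the entire content of the theorem, and your proposal does not prove it — as you yourself concede, Proposition \ref{criteria}(1) is self-referential here (a non-birational $t_j$ sits inside its own $\PA_j$, so no component count comes for free), and your fallback, "show a primitive variety of $\chi=0$ admits no proper generically finite primitive cover," is not a mechanism but a restatement of the claim to be proved. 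A "descending induction along the Noetherian poset" gives nothing by itself: the poset structure only supplies the minimal element $X_0$, which you already have; it does not tell you why the layer above $X_0$ collapses.

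The paper closes exactly this gap, and by a quite different mechanism than anything you sketch. It takes a \emph{minimal non-quasi-special} $X$ (so every general type variety strictly dominated by $X$ is quasi-special), notes that Proposition \ref{quasi-special}(2) then forces $V^0(\omega_X)$ to have $N\geq 4$ components, and runs a degree combinatorics on the $d_i=\deg(Y_i/A_i)$ using the intermediate varieties $Z_{ij}$: since each $Z_{ij}$ is either birational to $X$ (hence not quasi-special) or quasi-special (hence of degree $4$ over $A_X$, forcing $d_i=d_j=2$), one pins down $d_1>d_2=\cdots=d_N=2$ or all $d_i=2$, and shows $Z_{23}$ is quasi-special with $S_{Z_{23}}=\{\PA_1,\PA_2,\PA_3\}$, so that $\PA_4\notin S_{Z_{23}}$. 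The contradiction then comes from the explicit structure $Z_{23}\sim(F_1\times F_2\times F_3)/\langle\sigma\rangle$ of Proposition \ref{mainprop}: the fourth component produces a subtorus $\PK_{14}$ in general position, the induced fibration of $F_2\times F_3$ over $K_{14}$ is isotrivial, and the Guerra--Pirola finiteness theorem for dominant maps onto varieties of general type forces the fibers $F_t\to F_1\times F_3$ to land in a fixed proper closed subset, which is absurd. None of these ingredients — the fourth component, the degree analysis, the isotriviality, or the finiteness theorem — appears in your proposal, so what you have is a correct reduction with the genuinely hard part left open.
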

 \begin{proof}
 Suppose on the contrary that $X$ is not quasi-special. Since minimal primitive varieties are quasi-special,
 there exists $X'$ sitting between $X$ and $A_X$ such that $X'$ is not quasi-special but any other general type variety dominated by  $X'$ is quasi-special. Replace $X$ by $X'$, we may and do assume that $X$ is not quasi-special,
 but any variety of general type dominated by $X$ and not birational to $X$ is quasi-special.

For $X\xrightarrow{f_i}Y_i\xrightarrow{t_i} A_i$, we denote by $d_i$ the degree of $t_i$, for each $A_i\in S_X$. We may assume that $d_1\geq\cdots\geq d_N\geq 2$. Since $X$ is not quasi-special, by Proposition \ref{quasi-special}, we have $N\geq 4$. We will deduce a contradiction by the following steps.

\medskip
\noindent{\bf Step 1.} Either $d_1>d_2=\cdots d_N=2$, or $d_1=\cdots=d_N=2$.

First of all, we see that the set $T=\{d_i\mid 1\leq i\leq N\}$ contains at most two different numbers. Otherwise, we may assume that $d_i>d_j>d_k$. Then $Z_{ik}$, $Z_{ik}$, $Z_{jk}$ are of general type. They are not all birational to $X$ by Lemma \ref{etale}, since $X$ is not quasi-special. We may assume that $Z_{jk}$ is not birational to $X$ and hence $Z_{jk}$ is quasi-special. Then $d_j=d_k=2$, which is a contradiction.

If $T$ contains two different numbers, we may assume that $d_1=\cdots=d_s>d_{s+1}=\cdots=d_N\geq 2$. If $i \le s$, then  $Z_{ij}$ can not be quasi-special and thus birational to $X$.
 If $s\geq 2$, we consider $Z_{12}$. Take a  component  $\PA_i$ of $V^0(\omega_{Z_{12}})$ for some $i\geq 3$. Then $X$ is birational to $Z_{12}$, $Z_{1i}$ and $Z_{2i}$ and hence $X$ is quasi-special by Lemma \ref{etale}, which contradicts the assumption.
 Hence $s=1$. Moreover, since $Z_{1i}$ is birational to $X$ for any $i\geq 2$, $Z_{2j}$ should be quasi-special for any $2\leq j\leq N$, we then have $d_2=\cdots=d_N=2$.

We next consider that $d:=d_1=\cdots=d_N$. If $d\geq 3$, then $Z_{ij}$ is not quasi-special and hence is birational to $X$  for any $1\leq i<j\leq N$. But by Lemma \ref{etale}, $X$ is quasi-special, which is a contradiction. Thus $d_1=\cdots=d_N=2$, and each $Z_{ij}$ is quasi-special.

\medskip
\noindent{\bf Step 2.} In both cases, $Z_{23}$ is quasi-special and we may assume that $S_{Z_{23}}=\{\PA_1, \PA_2, \PA_3\}$.

Since $\deg(Z_{23}/A_X)=4$, by lemma \ref{quasi-special}, $Z_{23}$ is always quasi-special. Moreover, since $N\geq 4$, in the second case $d_1=\cdots=d_N=2$,
we may assume that $S_{Z_{23}}=\{\PA_1,\PA_2, \PA_3\}$ and thus $\PA_4\notin S_{Z_{23}}$.

Assume that $d_1>d_2=\cdots=d_N=2$, then each $Z_{1i}$ is not quasi-special and hence is birational to $X$, for any
$2\leq i\leq N$. Thus $\deg(X/A_X)=2d_1$. On the other hand, $Z_{23}$ is quasi-special.
We claim that $S_{Z_{23}}=\{\PA_1,\PA_2, \PA_3\}$. Otherwise, the composition of morphisms
$Z_{23}\rightarrow A_X\rightarrow A_1$ is a fibration.
Thus the main component of $Z_{23}\times_{A_1}Y_1$ is irreducible and is of degree $4d_1$ over $A_X$, which is absurd.
Thus we again have $\PA_4\notin S_{Z_{23}}$.

\medskip
\noindent{\bf Step 3.} Deduce a contradiction.

By step 2, in both cases, we may assume that $S_{Z_{23}}=\{\PA_1,\PA_2, \PA_3\}$. Then since $Z_{23}$ is quasi-special, after an \'etale cover of $Z_{23}$,
we may assume that $Z_{23}$ is birational to $(F_1\times F_2\times F_3)/\sigma$ as in Proposition \ref{mainprop}.

Consider $\PK_{14}$, $\PK_{24}$, and $\PK_{34}$, we may assume that $\PK_{14}$ is different from $\PK_1\times 0\times 0$, $0\times \PK_2\times 0$, and $0\times 0\times\PK_3$ as an ablelian subvariety
of $\PA_X=\PK_1\times \PK_2\times \PK_3$.

The corresponding fiber product of $(F_2\times F_3)/\langle\tau_2\times \tau_3\rangle$ and $Y_4$ over $K_{14}$ is quasi-special.
To summarize, we have the following commutative diagram of morphisms
\begin{eqnarray*}
 \xymatrix{
 F_2\times F_3\ar[drr]_f\ar[r]^(.3){\varphi}& (F_2\times F_3)/\langle\tau_2\times\tau_3\rangle\ar[dr]^g\ar[r] & A_1\ar[d]\\
 && K_{14},}
\end{eqnarray*}
where $\varphi$ is the quotient morphism and hence is \'etale in codimension $1$. Hence $f$ is also isotrivial. For $t\in K_{24}$ general,
the corresponding fiber $F_t$ of $f$ is birational to a fixed variety $F$. We denote by $p$ and $q$ the natural projections of $F_1\times F_3$ to $F_1$ and $F_3$
and denote by $p_t: F_t\rightarrow F_1$ and $q_t: F_t\rightarrow F_3$ the restrictions of $p$ and $q$ on $F_t$. Both $p_t$ and $q_t$ are dominate surjective morphisms
between varieties of general type. By \cite[Corollary 2.2]{gp}, we know that almost all $p_t$  are  birational equivalent to each other and so is $q_t$.
However, then  for $t\in K_{24}$ general, the image of $(p_t, q_t): F_t\rightarrow F_1\times F_3$ is contained in a fixed proper Zariski subset of $F_1\times F_3$.
This is absurd.
 \end{proof}

\end{document}